\crefname{hypothesis}{Hypothesis}{Hypotheses}
\title{A Geometric Approach to Optimal Control of Hybrid and Impulse Systems\thanks{November 22, 2021.
\funding{This work was funded by NSF grant DMS-1645643 and by the Department of Defense} \fundingl{(DoD) through the National Defense Science \& Engineering Graduate (NDSEG) Fellowship} \fundingl{Program.}}}
\author{William Clark\thanks{Department of Mathematics, Cornell University, Ithaca, NY 
  (\email{wac76@cornell.edu}).}
\and Maria Oprea\thanks{Center for Applied Mathematics, Cornell University, Ithaca, NY 
  (\email{mao237@cornell.edu}).}
\and Andrew J. Graven\thanks{Department of Mathematics, Caltech, Pasadena, CA (\email{andrew@graven.com}).}
}
\begin{document}

\maketitle

\begin{abstract}
    Hybrid dynamical systems are systems which undergo both continuous and discrete transitions. The Bolza problem from optimal control theory is applied to these systems and a hybrid version of Pontryagin's maximum principle is presented. This hybrid maximum principle is presented to emphasize its geometric nature which makes its study amenable to the tools of geometric mechanics and symplectic geometry. 
    One explicit benefit of this geometric approach is that Zeno behavior can be strongly controlled for ``generic'' control problems.
    
    Moreover, when the underlying control system is a mechanical impact system, additional structure is present which can be exploited and is thus explored. Multiple examples are presented for both mechanical and non-mechanical systems.
\end{abstract}

\begin{keywords}
  Hybrid systems, optimal control, symplectic geometry
\end{keywords}

\begin{AMS}
  49N25, 34A38, 37J39
\end{AMS}

\section{Introduction}
Solving an optimal control problem amounts to finding the ``best'' path that connects an initial location $A$ to a terminal location $B$. Analytically this is described by finding the minimizer of a functional, $J[\gamma]$ where $\gamma$ connects the points $A$ and $B$. An alternative approach to this problem is to consider the optimal path geometrically; the best path can be interpreted as the ``straight path'' connecting the two points. This interpretation is inspired by classical mechanics where trajectories follow the principle of least action (i.e. best paths are minimizers of the action functional) and the equations of motion are described by the famous Euler-Lagrange equations (which are precisely geodesic equations for natural systems with no potential energy). 

The goal of this work is to extend the geometrical idea of ``straight lines'' as optimal paths to the framework of hybrid dynamical systems - systems whose trajectories are subject to both continuous-time and discrete-time update laws. 
Hybrid dynamical systems are used to describe a wide variety of disciplines and a comprehensive list is nearly impossible to present. The problem of optimal control of hybrid dynamical systems has been covered extensively in the literature, e.g. \cite{10.1007/978-3-540-71493-4_50,10.1007/978-3-540-78929-1_3,cristofarocdc,pakniyat2014minimum,pakniyat2015minimum,7849195,pekarek2008variational,westenbroekcdc}. However, the approach presented here will differ as we will focus on the geometric aspects.


The general problem this work considers will be to find solutions to the following minimization problem
\begin{equation}
    u^* = \arg\min_{u(\cdot)}\, \int_0^{t_f}\, \ell\left(x(\tau),u(\tau)\right)\, d\tau + g\left( x(t_f)\right),
\end{equation}
where $x$ is subject to the controlled hybrid dynamics
\begin{equation}\label{eq:controlled_HDS}
    \begin{cases}
        \dot{x} = f(x,u), & x\not\in S \\
        x^+ = \Delta(x^-), & x\in S
    \end{cases}
\end{equation}
where $x\in M$ is the ambient space and $S\subset M$ is the set where the discrete transitions occur. 



The hybrid maximum principle states that the usual maximum principle applies during continuous transitions while a ``Hamiltonian jump condition'' will apply at the moment of a discrete transition. As an elementary example of this procedure, suppose we wish to find the shortest path connecting two points in $\mathbb{R}^2$, $A = (x_1,y_1)$ and $B=(x_2,y_2)$ with $y_i<0$. The shortest path is clearly the straight line connecting the two points. Suppose now that the path is \textit{required} to touch the line $\sigma = \{y=0\}$ and does so at the undetermined point $C = (z,0)$. This constrained shortest path will be the concatenation of the straight line connecting points $A$ and $C$ with the line connecting points $C$ and $B$. Its length is (as a function of $z$):
\begin{equation*}
    D(z) = \sqrt{ (x_1-z)^2 + y_1^2} + \sqrt{(x_2-z)^2+y_2^2}.
\end{equation*}
The value of $z$ that minimizes this function is
\begin{equation*}
    z^* = \frac{x_1y_2+x_2y_1}{y_1+y_2}.
\end{equation*}
\begin{figure}
    \centering
    \begin{tikzpicture}
        \draw[black,thick] (-3,0) -- (4,0);
        \node[below left] at (-3,0) {$\sigma$};
        \draw[blue,thick] (-2,-3) -- (1.75,0);
        \draw[blue,thick] (1.75,0) -- (3,-1);
        \node[left] at (-2,-3) {$A$};
        \node[above] at (1.75,0) {$C$};
        \node[right] at (3,-1) {$B$};
        \draw[fill] (-2,-3) circle [radius=0.05];
        \draw[fill] (1.75,0) circle [radius=0.05];
        \draw[fill] (3,-1) circle [radius=0.05];
        \path  (-2,-3) coordinate (a)
            -- (1.75,0) coordinate (b)
            -- (-3,0) coordinate (c)
        pic["$\theta_1$", draw=black, <->, angle eccentricity=1.2, angle radius=1cm] {angle=c--b--a};
        \path  (3,-1) coordinate (A)
            -- (1.75,0) coordinate (B)
            -- (4,0) coordinate (C)
        pic["$\theta_2$", draw=black, <->, angle eccentricity=1.2, angle radius=1cm] {angle=A--B--C};
    \end{tikzpicture}
    \caption{The shortest path connecting points $A$ and $B$ while also touching the line $\sigma$.}
    \label{fig:intro_bounce}
\end{figure}
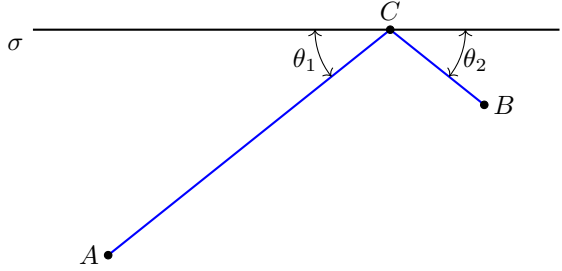
It is important to note that the two angles, $\theta_1$ and $\theta_2$, are equal. Indeed,
\begin{equation*}
    \tan\theta_1 = \frac{-y_1}{z^*-x_1} = \frac{y_1+y_2}{x_1-x_2} = \frac{-y_2}{x_2-z^*} = \tan\theta_2.
\end{equation*}
This means that the shortest path has its angle of incidence equal to its angle of reflection, i.e. the shortest path obeys specular reflection. 

The general hybrid maximum principle is a generalization of this example. Between points $A$ and $C$, and $C$ and $B$, the optimal path is a straight line which corresponds to a Hamiltonian trajectory. At the point $C$, the path undergoes specular reflection which is the Hamiltonian jump condition. 


The contribution of this work is to provide a geometric interpretation of optimal control of hybrid systems. In particular, we show that the Hamiltonian jump condition is a symplectomorphism and, as a consequence, the hybrid evolution of the adjoint equation is symplectic (and hence volume-preserving). A result from \cite{clark2021invariant} states that Zeno trajectories almost-never occur in hybrid systems which possess an invariant volume-form. This makes it possible to rule out Zeno trajectories from generic optimal control problems.

The definition for both hybrid and impact systems along with their solutions are presented in \Cref{sec:hybrid}. The optimal control problem is described in \Cref{sec:problem} and necessary conditions for optimality are derived in \Cref{sec:necessary}. 
Special structure appears when the underlying control system is mechanical and is examined in \Cref{sec:mechanical}.
The geometric object which encodes the solutions, so-called hybrid Lagrangian manifolds, are discussed in \Cref{sec:hybrid_Lagrange}. \Cref{sec:Zeno} demonstrates how this geometric approach can severely prohibit Zeno solutions. This paper concludes with an example in \Cref{sec:example_ball} with more located in \Cref{app:examples}.

Summation notation will be used throughout; an upper and lower index implies a sum, $a_ib^i=\sum\,a_ib^i$. Additionally, throughout this work, most of the objects considered will be smooth, which is taken to mean $C^\infty$. 
\section{Hybrid and Impact Systems}
\label{sec:hybrid}
Hybrid dynamical systems are used to study and model a wide range of phenomenon. As such, a  variety of different definitions exist in the literature. This section introduces our definition of a hybrid system as well as the important special case of impact systems. 
\subsection{Hybrid Systems}
Throughout this work, a hybrid dynamical system will follow the definition below.
\begin{definition}[Hybrid Dynamical Systems]\label{def:HDS}
	A hybrid dynamical system, abbreviated HDS, is a 4-tuple $\mathcal{H} = (M,S,X,\Delta)$ where
	\begin{enumerate}
		\item $M$ is a smooth (finite-dimensional, $\dim(M)=n$) manifold,
		\item $S\subset M$ is a smooth embedded submanifold with co-dimension 1,
		\item $X:M\to TM$ is a smooth vector field,
		\item $\Delta:S\to M$ is a smooth map such that $\Delta(S)$ is also a smooth embedded submanifold, and
		\item $S\cap\Delta(S)=\emptyset$ and $\overline{S}\cap\overline{\Delta(S)}\subset M$ has co-dimension at least 2.
	\end{enumerate}
	$M$ will be referred to as the state-space, $S$ as the guard, and $\Delta$ as the reset.
\end{definition}
The importance of $\Delta(S)$ being a submanifold is it allows for a more graceful study of differential forms across resets. Condition (5) is a regularity assumption and is needed to prove \cref{thm:Zeno} below. The set $S$ as defined above does not depend on time - this can be overcome by appending time as an extra dimension.
\begin{remark}
    A common, seemingly alternate, definition of HDS utilizes multiple state-spaces by incorporating a directed graph structure, cf. e.g. \cite{simic2001structural}. However, this version is still encapsulated by \cref{def:HDS} as $M$ is not assumed to be connected.
\end{remark}
The dynamics of $\mathcal{H}$ can be informally described as
\begin{equation}\label{eq:hybrid_dynamics}
    \begin{cases}
        \dot{x} = X(x), & x\not\in S, \\
        x^+ = \Delta(x^-), & x\in S,
    \end{cases}
\end{equation}
and its general solution will be described by its hybrid flow where completeness of its flow will be implicitly assumed.

\begin{definition}[Hybrid Flow]
    Let $\mathcal{H}=(M,S,X,\Delta)$ be an HDS. Let $\varphi:\mathbb{R}\times M\to M$ be the flow for the continuous dynamics $\dot{x}=X(x)$. The hybrid flow for the HDS \cref{eq:hybrid_dynamics} will be denoted by $\varphi^\mathcal{H}:\mathbb{R}\times M\to M$ and satisfies the following: if for all $s\in[0,t]$, $\varphi(s,x)\not\in S$, then $\varphi^\mathcal{H}(t,x) = \varphi(t,x)$ and if for all $s\in [0,t)$, $\varphi(s,x)\not\in S$ but $\varphi(t,x)\in S$, then
    \begin{equation*}
        \lim_{s\searrow t} \, \varphi^\mathcal{H}(s,x) = \Delta\left(
        \lim_{s\nearrow t} \, \varphi^\mathcal{H}(s,x)\right).
    \end{equation*}
    Alternatively, the hybrid flow may be denoted by $\varphi_t^\mathcal{H}(x) = \varphi^\mathcal{H}(t,x)$ to emphasise the dependence on the initial condition.
\end{definition}
For more on the solution concept of HDSs, see e.g. \cite{teelSurvey,sergey2006impulsive}. One drawback that the hybrid flow possesses is that it fails to be continuous at resets, which makes continuous (and smooth) dependence on initial conditions impossible.
However a weaker notion can be satisfied, the quasi-smooth dependence property.
\begin{definition}[Quasi-Smooth Dependence Property]
    Consider an HDS, $\mathcal{H}$, with flow $\varphi^\mathcal{H}$. $\mathcal{H}$ has the quasi-smooth dependence property if for every $x\in M\setminus S$ and $t\in \mathbb{R}$ such that $\varphi^\mathcal{H}(t,x)\not\in S$, there exists an open neighborhood $x\in U$ such that $U\cap S=\emptyset$ and the map $\varphi^\mathcal{H}(t,\cdot):U\to M$ is smooth.
\end{definition}
It turns out that many HDSs will have the quasi-smooth dependence property as it is related to transversality, cf. Theorem 2.4 and Proposition 1 in \cite{clark2021invariant}. We will therefore restrict our attention to systems which have this property.
\begin{assume}
    All HDSs will have the quasi-smooth dependence property.
\end{assume}
A useful feature of the quasi-smooth dependence property is that it allows for differential forms to be preserved across resets in a meaningful way. 
Invariant forms play a critical role in studying dynamical systems. For a continuous-time system, $\dot{x}=X(x)$, a differential form $\alpha$ is invariant if and only if $\mathcal{L}_X\alpha=0$. Likewise, for a discrete-time dynamical system, $x_{n+1}=f(x_n)$, a differential form is invariant if and only if $f^*\alpha=\alpha$. The conditions for a differential form to be invariant for a hybrid system are given below.
\begin{theorem}[\cite{clark2021invariant}]
	A differential form $\alpha\in\Omega^*(M)$ is preserved along the hybrid flow if and only if $\mathcal{L}_X\alpha=0$ and
	\begin{equation}\label{eq:invariance}
		\begin{split}
		\Delta^*\alpha &= \iota^*\alpha, \\
		\Delta^*i_X\alpha &= \iota^*i_X\alpha,
		\end{split}
	\end{equation}
	where $\iota:S\hookrightarrow M$ is the inclusion and $i_X\alpha = \alpha(X,\cdot)$ is interior multiplication. Let $\mathcal{A}_\mathcal{H}\subset\Omega^*(M)$ be the set of all invariant differential forms. Then $\mathcal{A}_\mathcal{H}$ forms a $\wedge$-subalgebra that is closed under $d$ and $i_X$.
\end{theorem}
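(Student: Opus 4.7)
The plan is to split ``preserved under the hybrid flow'' into its continuous and discrete portions, and then verify the algebraic closure claims by direct Cartan calculus.

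For the characterization, on any interval containing no reset the hybrid flow agrees with $\varphi_t$, so preservation there is equivalent to $\mathcal{L}_X\alpha=0$ by the standard Lie-derivative formula. To extract the jump conditions I would fix $s\in S$ and, using the quasi-smooth dependence property together with the transversality of $X$ to $S$ (tacit in the definition of the hybrid flow), pass to flow-box coordinates $(s_0,t)$ near $s$ in which $X=\partial_t$ and $S=\{t=0\}$, and analogous coordinates near $\Delta(s)$. In such coordinates any $\alpha$ satisfying $\mathcal{L}_X\alpha=0$ uniquely decomposes as $\alpha = \beta + dt\wedge\gamma$ with $\beta,\gamma$ free of $dt$ and independent of $t$; a short computation gives $\beta|_{t=0} = \iota^*\alpha$ and $\gamma|_{t=0} = \iota^*i_X\alpha$. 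Because the hybrid flow through the reset acts on these transversal slices by $\Delta$, matching the two flow-box expansions across the jump is equivalent to $\Delta^*\alpha=\iota^*\alpha$ (matching the $\beta$-component) together with $\Delta^*i_X\alpha=\iota^*i_X\alpha$ (matching the $\gamma$-component). Running the argument in reverse gives the ``if'' direction.

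The structural claim that $\mathcal{A}_\mathcal{H}$ is a wedge-subalgebra closed under $d$ and $i_X$ is then a routine verification that each of the three defining identities survives these operations: the Leibniz rules for $\mathcal{L}_X$ and for the graded derivation $i_X$, together with $\Delta^*$ and $\iota^*$ being algebra homomorphisms, handle $\wedge$; $[\mathcal{L}_X,d]=0$, naturality of $d$ under pullback, and Cartan's magic formula $\mathcal{L}_X = di_X + i_Xd$ (combined with $\mathcal{L}_X\alpha=0$ to rewrite $i_Xd\alpha = -di_X\alpha$) handle $d$; and $i_X^2=0$ together with $[\mathcal{L}_X,i_X]=0$ handle $i_X$. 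The main obstacle is the local flow-box analysis at the guard: showing precisely that under $\mathcal{L}_X\alpha=0$ a form in a tubular neighborhood of $S$ is fully determined by $\iota^*\alpha$ and $\iota^*i_X\alpha$, and that the hybrid flow's action through the reset is exactly the $\Delta$-pushforward of these transversal data. Once this reduction is in place, both directions of the characterization and all three closure claims collapse to standard Cartan identities.
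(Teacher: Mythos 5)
Your argument is correct, and it is essentially the route the paper indicates: the theorem is cited from \cite{clark2021invariant} rather than proved here, but the paper states that the proof goes via the augmented differential $\Delta_*^X$, and your flow-box decomposition $\alpha=\beta+dt\wedge\gamma$ with the reset acting by $\Delta$ on the transversal slice and carrying $X$ to $X\circ\Delta$ is exactly the coordinate incarnation of that object (the two matching conditions being the restriction of $(\Delta_*^X)^*\alpha=\alpha|_S$ to $T_xS$ and to its contraction with $X$). The closure verifications via the Leibniz rules, $[\mathcal{L}_X,d]=0$, Cartan's formula, and $i_X^2=0$ are likewise the standard ones.
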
 
\begin{definition}[Invariance Conditions]
	The requirement that $\Delta^*\alpha = \iota^*\alpha$ is called the specular condition while the requirement that $\Delta^*i_X\alpha = \iota^*i_X\alpha$ is called the energy condition.
\end{definition}
\Cref{eq:invariance} is proved via the \textit{augmented differential} which is the way to differentiate the reset map while being compatible with the vector field.
\begin{definition}[Augmented Differential]\label{def:aug_dif}
    For a smooth map, $\Delta:S\to M$, and a vector field, $X\in\mathcal{X}(M)$, the augmented differential of $\Delta$ is the linear map $\Delta_*^X:TM|_S\to TM$ and is defined via
    \begin{equation*}
        \begin{cases}
            \Delta_*^X\cdot u = \Delta_*\cdot u, & u\in T_xS\subset T_xM, \\
        \Delta_*^X\cdot X(x) = X\left(\Delta(x)\right).
        \end{cases}
    \end{equation*}
\end{definition}
The augmented differential is uniquely determined as long as $X(x)\not\in T_xS$ for all $x\in S$. Moreover, this map is invertible as long as $\Delta$ is an immersion and $X(x)\not\in T_x\Delta(S)$ for all $x\in\Delta(S)$.

An important consequence of this theorem is that we can determine whether or not a hybrid system is volume-preserving. If $\mu$ is a volume form, $\iota^*\mu=0$ which trivializes the specular condition and only the energy condition needs to be satisfied. Finding invariant volumes is important as it imposes strong restrictions on Zeno behavior as will be seen in \Cref{sec:Zeno}. 
\subsection{Impact Systems}\label{sec:impact_system}
A distinguished subclass of HDSs are impact systems. These are mechanical systems where physical impacts occur, e.g. a billiard ball ricocheting off a cushion.
This definition is stated for mechanical Hamiltonian systems but extends naturally to Lagrangian systems.
\begin{definition}
    A natural impact system is a 3-tuple $\mathcal{I}=(Q,H,S)$ where
    \begin{enumerate}
        \item $Q$ is a smooth manifold,
        \item $H:T^*Q\to\mathbb{R}$ is a smooth Hamiltonian, and
        \item $S\subset Q$ is a smooth, embedded, co-dimension 1 submanifold.
    \end{enumerate}
\end{definition}
Generally speaking, the set $S$ cannot be described as a regular level-set of a function $h:Q\to\mathbb{R}$; $S$ need not even be orientable. However, this can always be done locally and from the point of view of differential calculus, assuming that $S = h^{-1}(0)$ will not cause problems. As such, an impact system may be referred to by $(Q,H,h)$ as is used in \cite{Ames06isthere}.

An impact system induces an HDS via the following procedure: $(Q,H,S)$ becomes $(M,S^*,X_H,\Delta)$ where $M$, $S^*$, and $X_H$ are specified by
\begin{enumerate}
    \itemsep0.5em 
    \item $M = T^*Q$,
    \item $S^*$ is the set of ``outward pointing momenta,'' i.e.
    \begin{equation*}
        S^* = \left\{ (x,p)\in T^*Q|_S : (\pi_Q^*dh)(X_H) > 0\right\},
    \end{equation*}
    where $S = h^{-1}(0)$ and $\pi_Q:T^*Q\to Q$ is the canonical cotangent bundle projection.
    \item $X_H$ is the Hamiltonian vector field arising from the Hamiltonian $H$.
    \item The construction of $\Delta$ will constitute the remainder of this section.
\end{enumerate}

It is important to notice that the set $S^*$ \textit{depends on the Hamiltonian}. 
\begin{example}\label{ex:resets}
    Let $h = x^2+y^2-1$ with $Q$ is the unit disk and $S$ is the unit circle. Let $H_1 = p_yx-p_xy$ and $H_2=p_x^2+p_y^2$, then
    \begin{equation*}
        S^*_{H_1} = \emptyset, \quad S^*_{H_2} = \left\{ (x,y;p_x,p_y): x^2+y^2=1, \, xp_x+yp_y > 0\right\} \ne \emptyset.
    \end{equation*}
\end{example}

The reset map, $\Delta$, encodes the discontinuous transition at the point of impact. This is constructed via the following axiom, cf. Chapter 3 in \cite{brogliato}.
\begin{axiom}\label{ax:bounce}
	Mechanical impacts are the identity on the position variables and the velocity/momentum variables change according to Hamilton's/Lagrange-d'Alembert's principle.
\end{axiom}
Applying this axiom, we end up with the usual Weierstrass-Erdmann corner conditions, \cite{gelfand2012calculus}. On the Lagrangian side they become
\begin{equation*}
	\begin{split}
	\left(\frac{\partial L}{\partial\dot{x}}^+ - \frac{\partial L}{\partial \dot{x}}^-\right)\delta x &= 0, \\
	-\left( E_L^+ - E_L^-\right)\delta t &= 0,
	\end{split}
\end{equation*}
where $E_L$ is the energy associated to the Lagrangian and the superscripts denote pre- and post-reset states. Likewise, on the Hamiltonian side, the corner conditions are
\begin{equation}\label{eq:H_corner_conditions}
	\begin{split}
		\left( p^+ - p^-\right)\delta x &= 0, \\
		-\left( H^+ - H^-\right)\delta t &= 0,
	\end{split}
\end{equation}
where $p$ is the momentum. At the time of impact, $\delta t$ is arbitrary while $\delta x\in TS$. Therefore, the corner conditions can be written as (in Hamiltonian form)
\begin{equation}\label{eq:corner_conditions}
	\begin{split}
		p^+ &= p^- + \varepsilon\cdot dh, \\
		H^+ &= H^-,
	\end{split}
\end{equation}
where $\varepsilon$ is an unknown multiplier to enforce conservation of energy.
This is the impact condition that we should expect; energy is conserved and the change in momentum is proportional to the normal of the impact surface. This is summarized below in an intrinsic fashion.
\begin{theorem}\label{thm:corner}
	The corner conditions \cref{eq:corner_conditions} are equivalent to
	\begin{equation}\label{eq:impact_conditions}
		\overline{\Delta}^*\vartheta_H = \iota^*\vartheta_H,
	\end{equation}
	where $\iota:\mathbb{R}\times S^*\hookrightarrow \mathbb{R}\times T^*Q$ is the inclusion, $\overline{\Delta}(t,x) = (t,\Delta(x))$, and $\vartheta_H$ is the action form,
	\begin{equation*}
		\vartheta_H = p_i\cdot dx^i - H\cdot dt\in\Omega^1(\mathbb{R}\times T^*Q).
	\end{equation*}
\end{theorem}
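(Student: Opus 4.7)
The plan is to unpack the identity $\overline{\Delta}^*\vartheta_H = \iota^*\vartheta_H$ in local coordinates and test it against arbitrary tangent vectors, recovering the two corner conditions as the two independent components (one from $dt$, one from $dx$).

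First I would fix a point $(t,x,p^-)\in\mathbb{R}\times S^*$ with $S=h^{-1}(0)$ locally, and describe a generic tangent vector there as a triple $(\tau,v,w)$ where $\tau\in\mathbb{R}$, $v\in T_xS$ (so $dh(v)=0$ automatically), and $w$ is a momentum variation compatible with $S^*$. Since $\overline{\Delta}(t,x,p)=(t,x,\Delta_2(x,p))$ preserves the time and base-point coordinates, the pushforward $\overline{\Delta}_*(\tau,v,w)$ has the form $(\tau,v,\tilde w)$ for some $\tilde w$, and because $\vartheta_H=p_i\,dx^i-H\,dt$ contains no $dp$ component, the exact value of $\tilde w$ is irrelevant for the pullback. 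This reduces the computation to
\begin{equation*}
(\overline{\Delta}^*\vartheta_H)(\tau,v,w) = p_i^+ v^i - H(x,p^+)\tau, \qquad (\iota^*\vartheta_H)(\tau,v,w) = p_i^- v^i - H(x,p^-)\tau,
\end{equation*}
where $p^+=\Delta_2(x,p^-)$.

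Next I would separate the two components. Setting $v=0$ and letting $\tau$ vary gives $H(x,p^+)=H(x,p^-)$, which is exactly the energy corner condition $H^+=H^-$. Setting $\tau=0$ and letting $v$ vary over all of $T_xS$ gives $(p^+_i-p^-_i)v^i=0$ for every $v\in T_xS=\ker dh$; by linear algebra, the covector $p^+-p^-$ must then lie in the annihilator of $\ker dh$, which is spanned by $dh$. This produces a (unique) scalar $\varepsilon$ with $p^+=p^-+\varepsilon\cdot dh$, matching the momentum corner condition.

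The converse direction is essentially a read-back of the same computation: assuming both equations in \cref{eq:corner_conditions} hold, the two expressions above agree on every tangent vector $(\tau,v,w)$, so the forms coincide. The one subtlety worth flagging is the need to verify that the identification used in writing $p^+-p^-=\varepsilon\,dh$ is coordinate independent; this is immediate once one observes that $dh|_{S}$ is (up to a nonzero scalar) the unique section of the conormal bundle $T^*Q|_S/T^*S$, so the statement $p^+-p^-\in\operatorname{Ann}(TS)$ is intrinsic. I do not expect any genuine obstacle here — the content is really just the observation that the two independent directions in $T(\mathbb{R}\times S^*)/T S^*$ (namely $\partial_t$ and the conormal to $S$) are precisely the ones that detect the energy jump and the momentum jump, respectively.
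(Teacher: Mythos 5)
Your proof is correct and is essentially the paper's own argument in dual form: the paper works in adapted coordinates with $S=\{x^n=0\}$ and equates the coefficients of $dx^1,\dots,dx^{n-1}$ and $dt$ after pulling back to $S$, which is exactly your "test against $v\in T_xS$ and $\partial_t$" step, with the unconstrained $p_n$ playing the role of your $\varepsilon\cdot dh$ term in $\operatorname{Ann}(T_xS)$. Your added remarks on the irrelevance of the momentum variation and on coordinate independence are fine but not a different method.
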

%
\begin{proof}
	Choose local coordinates such that the impact occurs when the last coordinate vanishes, $S = \{x^n=0\}$. Then \cref{eq:impact_conditions} is
	\begin{equation*}
		\left(p_i\circ\Delta\right) dx^i - \left(H\circ\Delta\right) dt = p_idx^i - Hdt, \quad i=1,\ldots,n-1.
	\end{equation*}
	Equating terms, we see that all $p_i$ must remain fixed, with the exception of $p_n$, and $H$ must also remain fixed. This is precisely \cref{eq:corner_conditions}.
\end{proof}
\begin{remark}
	The impact condition \cref{eq:impact_conditions} also describes the impacts even when the surface is moving, e.g. a tennis racket striking the tennis ball. Let $S_t\subset M$ be the time-dependent surface and suppose that it is described by the level-set of a function $h_t(x) = h(t,x)=0$. Then \cref{eq:impact_conditions} dictates that the impact equations are given by
	\begin{equation*}
		\begin{split}
			\left( p_i^+ - p_i^-\right) &= \varepsilon\cdot \frac{\partial h}{\partial x^i},\\
			-\left( H^+ - H^-\right) &= \varepsilon \cdot \frac{\partial h}{\partial t}.
		\end{split}
	\end{equation*}
	Additionally, \cref{eq:impact_conditions} will even describe the correct impact equations when we lift a hybrid control system to its corresponding impact Hamiltonian system in \Cref{sec:necessary}.
\end{remark}

To summarize this section, a natural impact system $(Q,H,S)$ induces a hybrid dynamical system $(T^*Q,S^*,X_H,\Delta)$ where the state-space is the cotangent bundle, the guard consists of ``outward pointing momenta,'' the vector field is the Hamiltonian vector field, and the reset is prescribed by \cref{thm:corner}. This procedure is illustrated below.

\begin{example}[Bouncing Ball]
    Consider the bouncing ball on an oscillating table. The continuous-time dynamics are given by
    \begin{equation*}
        \dot{x} = \frac{1}{m}y, \quad \dot{y} = -mg,
    \end{equation*}
    where $m$ is the mass of the ball, $g$ is the acceleration due to gravity, $x$ is the vertical height of the ball and $y$ is the ball's momentum. 
    In the simple case where the table is stationary and flat, the impact occurs when $x=0$ and $y\mapsto -y$ is the reset. However, in the case where the table is vertically oscillating, the impact occurs when $x= A\sin\omega t$. From \cref{eq:impact_conditions}, the reset equations are
    \begin{equation*}
        \begin{split}
            x & \mapsto x \\
            y & \mapsto -y + 2mA\omega\cos(\omega t),
        \end{split}
    \end{equation*}
    which agrees with the usual impact relationship, cf. \S 2.4 in \cite{guckenheimer1983}.
\end{example}
We conclude this section with the important observation that hybrid systems generated by impact systems are symplectic.
\begin{theorem}\label{thm:sympectic_invariant}
	Time-independent impact systems preserve the symplectic form as well as the volume form $\omega^n$. Time-dependent impact systems preserve the form $\omega_H :=-d\vartheta_H$ as well as the volume form $dt\wedge \omega_H^n$.
\end{theorem}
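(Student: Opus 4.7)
The plan is to invoke the invariance theorem from \cite{clark2021invariant} (stated earlier in this section), which requires three ingredients for a differential form $\alpha$ to be preserved by the hybrid flow: (i) $\mathcal{L}_X \alpha = 0$, (ii) the specular condition $\overline{\Delta}^* \alpha = \iota^* \alpha$, and (iii) the energy condition $\overline{\Delta}^* i_X \alpha = \iota^* i_X \alpha$. I would verify these three conditions for each of the four forms in the statement, treating the time-dependent case first and deducing the time-independent case as a specialization.

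For $\omega_H = -d\vartheta_H$ on $\mathbb{R}\times T^*Q$, whose hybrid vector field is the lift $X = \partial_t + X_H$, applying the exterior derivative to the impact condition \cref{eq:impact_conditions} and using that $d$ commutes with pullback immediately yields the specular condition $\overline{\Delta}^* \omega_H = \iota^* \omega_H$. Next I would compute in canonical coordinates that $i_X \omega_H = 0$: writing $\omega_H = dx^i \wedge dp_i + dH\wedge dt$, the contribution from $dx^i\wedge dp_i$ is $dH - (\partial H/\partial t)\,dt$, while the contribution from $dH\wedge dt$ is $(\partial H/\partial t)\,dt - dH$, and these cancel. Consequently the energy condition holds trivially, and $\mathcal{L}_X \omega_H = d(i_X\omega_H) + i_X(d\omega_H) = 0$ since $\omega_H$ is closed.

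For the top-degree form $dt\wedge \omega_H^n$, the specular condition is automatic by degree count, as both $\iota^*$ and $\overline{\Delta}^*$ annihilate forms of degree greater than $\dim(\mathbb{R}\times S^*) = 2n$. The energy condition reduces to invariance of $i_X(dt\wedge\omega_H^n) = \omega_H^n$ (using $i_X dt = 1$ and $i_X\omega_H = 0$), which is obtained by wedging the specular condition for $\omega_H$ with itself $n$ times. The Lie derivative vanishes because $\mathcal{L}_X dt = 0$ and $\mathcal{L}_X \omega_H = 0$.

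The time-independent case follows as a specialization. Since $\overline{\Delta}(t,x) = (t,\Delta(x))$ and $\vartheta_H = \theta - H\,dt$ with $H$ time-independent, the impact condition \cref{eq:impact_conditions} splits into the two requirements $\Delta^*\theta = \iota^*\theta$ on $S^*$ and $H\circ\Delta = H|_{S^*}$. Differentiating the first yields the specular condition for $\omega = -d\theta$, while the second yields the energy condition $\Delta^* dH = \iota^* dH$ (recalling $i_{X_H}\omega = dH$). Invariance of the Liouville volume $\omega^n$ follows by the same top-degree argument: specular is automatic and the energy condition uses $i_{X_H}\omega^n = n\,\omega^{n-1}\wedge dH$ together with the just-verified invariance of $\omega$ and $dH$. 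The main subtlety---rather than a genuine obstacle---is recognizing that the single impact relation $\overline{\Delta}^*\vartheta_H = \iota^*\vartheta_H$ simultaneously packages both the specular geometry (obtained by differentiation) and the conservation of energy (obtained by isolating the $dt$-component); once this observation is made, the verification reduces to bookkeeping with the invariance theorem.
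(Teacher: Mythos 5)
Your proposal is correct and follows essentially the same route as the paper's proof: the specular condition for $\omega$ (resp.\ $\omega_H$) is obtained by applying $d$ to the impact condition \cref{eq:impact_conditions}, the energy condition comes from $\Delta^* i_{X_H}\omega = \Delta^* dH = \iota^* dH$ in the time-independent case and from $i_X\omega_H = 0$ in the time-dependent case, and the volume forms are handled by closure of invariant forms under wedge product. You simply spell out a few steps the paper leaves implicit (the coordinate check that $i_{\partial_t + X_H}\omega_H = 0$ and the degree-count making the specular condition for top forms automatic), so no substantive difference.
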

\begin{proof}
    We start with the time-independent case. The energy condition is satisfied via conservation of energy:
    \begin{equation*}
        \Delta^*i_{X_H}\omega = \Delta^*dH = \iota^*dH = \iota^*i_{X_H}\omega.
    \end{equation*}
    The specular condition follows from
    \begin{equation*}
        \Delta^*\omega = -\Delta^*d\vartheta = -d\Delta^*\vartheta = -d\iota^*\vartheta = \iota^*\omega.
    \end{equation*}
    We next consider the time-dependent case. The energy condition is trivially satisfied as $i_{X_H}\omega_H=0$. The specular condition follows from
    \begin{equation*}
        \overline{\Delta}^*\omega_H = -d\overline{\Delta}^*\vartheta_H = -d\iota^*\vartheta_H = \iota^*\omega_H.
    \end{equation*}
    The invariant volumes follow as the wedge product of invariant forms remains invariant.
\end{proof}
\section{Problem Statement}\label{sec:problem}
Here we introduce the notion of a hybrid control system, which will be an extension of \cref{def:HDS}, as well as the formal problem statement for the optimal control problem.
\begin{definition}
    A hybrid control system is a 5-tuple $\mathcal{HC}=(M,\mathcal{U},S,f,\Delta)$ such that
    \begin{enumerate}
        \item $M$ is a smooth (finite-dimensional, $\dim(M)=n$) manifold,
        \item $S\subset M$ is a smooth embedded submanifold with co-dimension 1,
        \item $\mathcal{U}\subset \mathbb{R}^m$ is a closed subset consisting of admissible controls,
        \item $f:M\times V\to TM$ is smooth where $\mathcal{U}\subset V$ is an open neighborhood, and
        \item $\Delta:S\to M$ is a smooth map such that $\Delta(S)$ is also a smooth embedded submanifold, and
        \item $S\cap\Delta(S)=\emptyset$ and $\overline{S}\cap\overline{\Delta(S)}\subset M$ has co-dimention at least 2.
    \end{enumerate}
\end{definition}
For a given hybrid control system, two classes of hybrid optimal control problems will be considered: terminal cost and fixed end-points. All systems here will be assumed to be time-independent, but most results will still hold for time-dependent systems. The set $\mathcal{U}^I$ is the set of all measurable functions $I\to\mathcal{U}$.
\begin{problem}[Terminal cost]\label{prob:TI_TC}
    The cost functional to be minimized for systems subject to a terminal cost is given by,
    \begin{gather*}
        J:M\times \mathcal{U}^{[0,t_f]}\times [0,t_f]\to\mathbb{R}, \\
        J(x_0,u(\cdot),s) = \int_s^{t_f} \, \ell(x(t),u(t))\, dt + g(x(t_f)),
    \end{gather*}
    subject to $x(0)=x_0$ and the dynamics
    \begin{equation}
        \begin{cases}
            \dot{x}(t) = f(x(t),u(t)), & x(t)\not\in S, \\
            x(t)^+ = \Delta(x(t)^-), & x(t)\in S.
        \end{cases}
    \end{equation}
\end{problem}
\begin{problem}[Fixed end-points]\label{prob:TI_FE}
    The cost functional to be minimized for systems subject to fixed end-points is given by,
    \begin{gather*}
        J:M\times \mathcal{U}^{[0,t_f]}\times [0,t_f]\to\mathbb{R}, \\
        J(x_0,u(\cdot),s) = \int_s^{t_f} \, \ell(x(t),u(t))\, dt,
    \end{gather*}
    subject to $x(t_0)=x_0$ and $x(t_f)=x_f$ and the dynamics
    \begin{equation}
        \begin{cases}
            \dot{x}(t) = f(x(t),u(t)), & x(t)\not\in S, \\
            x^+ = \Delta(x^-), & x(t)\in S.
        \end{cases}
    \end{equation}
\end{problem}
For both \cref{prob:TI_TC} and \cref{prob:TI_FE} a solution is given by
\begin{equation*}
    u^*_{x_0}(\cdot) = \arg\min_{u\in\mathcal{U}^{[0,t_f]}} \, J(x_0,u(\cdot),0),
\end{equation*}
and the function $J^*(x_0,s) := J(x_0,u^*_{x_0}(\cdot),s)$ is called the \textit{value function}.

\section{Necessary Conditions for Optimality}\label{sec:necessary}
For continuous-time systems, the value function satisfies the Hamilton-Jacobi-Bellman (HJB) PDE. Solving this PDE via the method of characteristics results in the Pontraygin Maximum Principle (PMP). In the hybrid setting, the value function will satisfy the \textit{hybrid} Hamilton-Jacobi-Bellman PDE which will result in the \textit{hybrid} Pontryagin Maximum Principle.

Away from resets, the optimality conditions will be identical to the usual continuous ones. At a point of reset, the value function undergoes a discontinuity and can be described by an \textit{extended reset map}.
Let $\mathcal{HC} = (M,\mathcal{U},S,f,\Delta)$ be a hybrid control system with performance measure $J$. Define the Hamiltonian $\hat{H}:T^*M\times\mathcal{U}\to\mathbb{R}$ where
\begin{equation}\label{eq:formH}
    \hat{H}(x,p,u) = p_0\cdot\ell(x,u) + \langle p,f(x,u)\rangle, \quad p_0\in\{0,1\}
\end{equation}
and the optimal Hamiltonian $H:T^*M\to\mathbb{R}$, 
\begin{equation}\label{eq:minH}
    H(x,p) = \min_u \, \hat{H}(x,p,u).
\end{equation}

We make the following two regularity assumption.
\begin{assume}
    The optimal control, $u$, is uniquely determined by the condition
    \begin{equation*}
        \frac{\partial \hat{H}}{\partial u} = 0.
    \end{equation*}
    Moreover, the control $u = u(x,p)$ has a smooth dependence on the other variables and the resulting optimal Hamiltonian is smooth.
\end{assume}
\begin{assume}
    The optimal control problem is regular such that the multiplier can be chosen to be $p_0=1$ in \cref{eq:formH}.
\end{assume}
We will turn the hybrid control system on $M$ into a hybrid dynamical system on $T^*M$. In order to do so we need to specify the vector field, the guard and the reset map of the new system. The vector field will be given by the Hamiltonian vector field arising from the optimal Hamiltonian. The \textit{extended guard} contains the set of ``outward pointing momenta" as in \Cref{sec:impact_system}. The  ``Hamiltonian jump condition''  will be the \textit{extended reset map}.
\begin{definition}[Extended Guard and Reset Map]
    Let $\mathcal{HC}=(M,\mathcal{U},S,f,\Delta)$ be a hybrid control system and $H:T^*M\to\mathbb{R}$ an optimal Hamiltonian. Then the set 
    \begin{equation*}
        S^* = \left\{ (x,p)\in T^*M|_S : \pi_M^*dh(X_H) > 0\right\},
    \end{equation*}
    is called the \textit{extended guard} where $S = h^{-1}(0)$ and $\pi_M:T^*M\to M$ is the cotangent bundle projection. A smooth map $\tilde{\Delta}:S^*\to T^*M$ is called the \textit{extended reset} if it satisfies
    \begin{equation}\label{eq:controlled_impact}
        \left( \mathrm{Id}\times\tilde{\Delta}\right)^*\vartheta_H = \iota^*\vartheta_H, \quad
        \begin{tikzcd}
            \mathbb{R}\times S^* \arrow[r, "\mathrm{Id}\times\tilde{\Delta}"] \arrow[d, "\mathrm{Id}\times\pi_M"] & \mathbb{R}\times T^*M \arrow[d, "\mathrm{Id}\times\pi_M"] \\
            \mathbb{R}\times S \arrow[r,"\mathrm{Id}\times\Delta"] & \mathbb{R}\times M
        \end{tikzcd}
    \end{equation}
    such that the diagram is commutative, and $\vartheta_H$ is the action form
    \begin{equation*}
        \vartheta_H = p_i\cdot dx^i - H\cdot dt \in \Omega^1(\mathbb{R}\times T^*M).
    \end{equation*}
\end{definition}

In coordinates, \cref{eq:controlled_impact} states that $H\circ\tilde{\Delta} = H,$ and $p^+\circ \Delta_* = p^- + \varepsilon\cdot dh$. To parse this second equation, recall that $\Delta:S\to M$ and thus $\Delta_*:TS\to TM$. As a result, this makes $p^+\circ\Delta_*$ \color{black} well-defined modulo $TM|_S/TS=\mathrm{Ann}(TS)$. This will not cause issues as $dh\in\mathrm{Ann}(TS)$ and $\varepsilon$ absorbs any discrepancies. For concreteness, we will work with the augmented differential $\Delta_*^X:TM|_S\to TM$ so $p^+\circ\Delta_*^X = p^- + \varepsilon\cdot dh$, where $X = (\pi_M)_*X_H$ \textit{which may depend on $p$}. 

Existence/uniqueness of smooth solutions to \cref{eq:controlled_impact} is not guaranteed. Instances where this does not happen are discussed in \cref{app:nonunique}. We will thus make the following assumption.
\begin{assume}\label{ass:reset}
    The reset map $\Delta:S\to M$ is an immersion and the extended reset equation \cref{eq:controlled_impact} admits a single smooth solution.
\end{assume}
This assumption is not too restrictive as various common optimal Hamiltonians admit unique solutions to \cref{eq:controlled_impact} as the propositions below illustrate. \Cref{app:nonunique} considers an example where $\Delta$ fails to be immersive and thus a smooth solution fails to exist.
\begin{definition}
    Let $f\in\mathcal{X}(M)$ be a vector field. The function $P(f):T^*M\to\mathbb{R}$ is called its momentum where
    \begin{equation*}
        P(f)(x,p) = p\left(f(x)\right) = p_if^i(x).
    \end{equation*}
\end{definition}

\begin{proposition}\label{prop:linear_Hamiltonian}
    Let $H = P(f)$ be the momentum of the vector field $f$. If for all $x\in S$, the augmented differential, $\Delta_*^f$, exists and is invertible, then \cref{eq:controlled_impact} admits a unique solution. 
\end{proposition}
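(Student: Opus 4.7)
The plan is to exploit the linearity of the Hamiltonian $H=P(f)$ in the momentum variable, which forces the energy condition in \cref{eq:controlled_impact} to collapse into a single algebraic constraint on the Lagrange multiplier $\varepsilon$, and then use the hypothesis that $\Delta^f_*$ is invertible to solve for $p^+$ explicitly.

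First, I would compute $X_H$ for $H(x,p)=p_i f^i(x)$ and observe that the base-component of the Hamiltonian vector field is precisely $f$, so the vector field $X=(\pi_M)_*X_H$ appearing in the augmented differential in the discussion preceding \cref{ass:reset} is exactly $f$ itself. Consequently, the two scalar equations encoded by the extended reset \cref{eq:controlled_impact} in coordinates read
\begin{equation*}
    p^+\circ \Delta_*^f \;=\; p^- + \varepsilon\cdot dh, \qquad H(\Delta(x),p^+) \;=\; H(x,p^-),
\end{equation*}
and since $H$ is linear in $p$ with coefficient $f$, the second equation becomes $p^+\bigl(f(\Delta(x))\bigr)=p^-\bigl(f(x)\bigr)$.

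The next step is the main computation: using the defining property of the augmented differential from \cref{def:aug_dif}, namely $\Delta_*^f\cdot f(x)=f(\Delta(x))$, I would apply both sides of the first equation to the vector $f(x)\in T_xM$ to obtain
\begin{equation*}
    p^+\bigl(f(\Delta(x))\bigr) \;=\; p^-\bigl(f(x)\bigr) + \varepsilon\cdot dh\bigl(f(x)\bigr).
\end{equation*}
Comparing with the energy condition yields $\varepsilon\cdot dh(f(x))=0$. But on the extended guard $S^*$ we have by definition $\pi_M^*dh(X_H)>0$, and a quick computation shows $\pi_M^*dh(X_H)=dh(f)$, so $dh(f(x))>0$ on $S^*$ and therefore $\varepsilon=0$.

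With $\varepsilon=0$, the first equation reduces to $p^+\circ \Delta_*^f = p^-$, and invertibility of $\Delta_*^f$ (guaranteed by hypothesis) yields the unique solution $p^+ = p^-\circ (\Delta_*^f)^{-1}$. Smoothness of $\tilde\Delta$ follows from smoothness of $f$, $\Delta$, and the inverse of $\Delta_*^f$, which is smooth because $\Delta_*^f$ depends smoothly on $x\in S$. The only subtle point, and the one I would double-check carefully, is the identification $\pi_M^*dh(X_H)=dh(f)$ together with the claim that this quantity is strictly positive (not merely nonzero) on $S^*$, since this is what rules out the spurious multiplier and thereby forces uniqueness; the strict positivity is exactly the ``outward pointing'' convention built into the definition of $S^*$, so no further work is required.
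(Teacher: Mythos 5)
Your proof is correct and follows essentially the same route as the paper's: both reduce \cref{eq:controlled_impact} to a linear equation in the multiplier $\varepsilon$ and use invertibility of $\Delta_*^f$ together with $dh(f(x))\neq 0$ on the guard to pin down $p^+$ uniquely. Your version additionally tests the momentum equation against the distinguished vector $f(x)$ and uses $\Delta_*^f\cdot f(x)=f(\Delta(x))$ to conclude $\varepsilon=0$ outright, which is consistent with the paper's closed-form expression, whose numerator $P(f)-P(\mathrm{Ad}_\Delta f)$ in fact vanishes identically because $\mathrm{Ad}_\Delta f=\left(\Delta_*^f\right)^{-1}\cdot f\circ\Delta=f$ on $S$ by the defining property of the augmented differential.
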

\begin{proof}
    This reduces to a linear algebra problem. Relabeling for $x\in S$,
    \begin{equation*}
        a = f\circ\Delta(x), \quad b = f(x), \quad A = \left(\Delta_*^f\right)^{-1}(x), \quad c = dh_x\circ A,
    \end{equation*}
    condition \cref{eq:controlled_impact} becomes 
    \begin{gather*}
        \langle p^+,a\rangle = \langle p^-,b\rangle, \\
        p^+ = Ap^- + \varepsilon c.
    \end{gather*}
    Solving this linear equation results in the unique value of $\varepsilon$,
    \begin{equation*}
        \varepsilon = \frac{ \langle p^-,b\rangle - \langle Ap^-,a\rangle}{\langle c,a\rangle} 
        = \frac{ P(f) - P(\mathrm{Ad}_\Delta f)}{dh\left(\mathrm{Ad}_\Delta f\right)},
    \end{equation*}
    where $\mathrm{Ad}_\Delta f := \left(\Delta_*^f\right)^{-1}\cdot f\circ\Delta$. To ensure that the denominator is nonzero, we notice that $dh\circ\left(\Delta_*^f\right)^{-1}\in\mathrm{Ann}(T\Delta(S))$ and invertibility of the matrix $\Delta_*^f$ implies that $f\circ\Delta(x)\not\in T\Delta(S)$.
\end{proof}

It is unsurprising that there exists a unique solution when the Hamiltonian is linear in momentum as linear equations usually have a single root. If the Hamiltonian is quadratic in the momentum, it seems natural that two solutions should exist. Uniqueness can still be recovered in the quadratic case if degeneracy is allowed.
\begin{proposition}\label{prop:quadratic_Hamiltonian}
    Suppose that the Hamiltonian has the form
    \begin{equation}\label{eq:quadratic_Hamiltonian}
        H(x,p) = \beta_x(p,p) + P(f)(x,p) + V(x),
    \end{equation}
    where $\beta_x$ is a symmetric bilinear form.
    If for all $x\in S$, $\Delta_*^f$ is invertible and $\mathrm{Ann}(T\Delta(S))\subset \ker\beta$, then there exists a unique solution to \cref{eq:controlled_impact}. 
\end{proposition}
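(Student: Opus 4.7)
My plan is to reduce \cref{eq:controlled_impact} at each $x\in S$ to a single scalar equation in one unknown multiplier, and then use the degeneracy hypothesis $\mathrm{Ann}(T\Delta(S))\subset\ker\beta$ to collapse what would naively be quadratic into a linear equation with a nonvanishing coefficient.

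First, working pointwise at $x\in S$ with incoming momentum $p^-$, I read off from $(\mathrm{Id}\times\tilde\Delta)^*\vartheta_H=\iota^*\vartheta_H$ the tangential matching condition $p^+(\Delta_*v)=p^-(v)$ for every $v\in T_xS$. Because $\Delta$ is an immersion, this uniquely determines the restriction $p^+|_{T_{\Delta(x)}\Delta(S)}$. Since $T\Delta(S)$ has codimension one in $T_{\Delta(x)}M$, the covector $p^+$ is pinned down only up to the addition of an element of $\mathrm{Ann}(T_{\Delta(x)}\Delta(S))$. I would then choose a smooth generator $\nu(x)$ of this line together with a smooth extension $q(x,p^-)$ of the determined part, so that every candidate post-reset momentum has the form $p^+=q+\eta\nu$ for a single scalar unknown $\eta$.

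Second, the remaining constraint is energy conservation $H(\Delta(x),q+\eta\nu)=H(x,p^-)$. Expanding $H$ as in \cref{eq:quadratic_Hamiltonian} and invoking $\nu\in\ker\beta$, which by symmetry gives $\beta(\nu,\cdot)\equiv 0$, both the quadratic term $\eta^2\beta(\nu,\nu)$ and the cross term $2\eta\beta(q,\nu)$ drop out. What survives is the single linear equation
\begin{equation*}
\eta\,\langle\nu,\,f(\Delta(x))\rangle = H(x,p^-)-\beta_{\Delta(x)}(q,q)-\langle q,f(\Delta(x))\rangle-V(\Delta(x)).
\end{equation*}
Since $\nu$ generates $\mathrm{Ann}(T\Delta(S))$, its coefficient is nonzero precisely when $f(\Delta(x))\notin T_{\Delta(x)}\Delta(S)$, which is exactly the transversality packaged into invertibility of $\Delta_*^f$, as used at the end of the proof of \cref{prop:linear_Hamiltonian}. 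Hence $\eta$ exists and is unique, and smooth dependence of every ingredient on $(x,p^-)$ together with the uniformly nonvanishing coefficient promotes $\tilde\Delta$ to a smooth map.

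The main obstacle I anticipate is the bookkeeping around the augmented differential $\Delta_*^X$ when the projected drift $X=(\pi_M)_*X_H$ itself depends on $p$, which would make $\Delta_*^X$ implicit and complicate any naive mimicking of \cref{prop:linear_Hamiltonian}. The approach above sidesteps this entirely by parametrizing $p^+$ on the intrinsic quotient $T^*_{\Delta(x)}M/\mathrm{Ann}(T\Delta(S))$, so only $\Delta_*$ on $TS$ ever enters; the appearance of $f$ rather than $X$ in the hypothesis is then natural, and the degeneracy of $\beta$ along $\mathrm{Ann}(T\Delta(S))$ is precisely the algebraic feature that neutralizes the $\eta^2$ term which would otherwise produce a second spurious solution branch.
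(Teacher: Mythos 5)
Your argument is correct and is essentially the paper's proof: both reduce \cref{eq:controlled_impact} to a scalar quadratic in the single multiplier, use $\mathrm{Ann}(T\Delta(S))\subset\ker\beta$ to annihilate the quadratic and cross terms, and invoke the transversality $f(\Delta(x))\notin T_{\Delta(x)}\Delta(S)$ coming from invertibility of $\Delta_*^f$ to make the remaining linear coefficient nonzero. Your parametrization $p^+=q+\eta\nu$ on the quotient is just a coordinate-free restatement of the paper's $p^+=Ap^-+\varepsilon c$ with $A=\left(\Delta_*^f\right)^{-1}$ and $c=dh\circ A\in\mathrm{Ann}(T\Delta(S))$, so the difference is purely presentational.
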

\begin{proof}
    As with \cref{prop:linear_Hamiltonian}, this problem reduces to a linear algebra problem.
    Relabeling for $x\in S$,
    \begin{gather*}
        a = f\circ\Delta(x), \quad b = f(x), \quad A = \left(\Delta_*^f\right)^{-1}(x), \quad c = dh_x\circ A, \\
        M = \beta_x, \quad N = \beta_{\Delta(x)}, \quad y = V(x), \quad z = V(\Delta(x)).
    \end{gather*}
    The impact condition becomes
    \begin{gather*}
        \langle p^+, Np^+\rangle + \langle p^+,a\rangle + z = \langle p^-,Mp^-\rangle + \langle p^-,b\rangle + y \\
        p^+ = Ap^- + \varepsilon c
    \end{gather*}
    This reduces to a quadratic equation in $\varepsilon$ of the form $c_1\varepsilon^2+c_2\varepsilon+c_3=0$ where
    \begin{equation*}
        \begin{split}
            c_1 &= \langle c, Nc\rangle = \beta_{\Delta(x)}\left( dh\circ\left(\Delta_*^f\right),dh\circ\left(\Delta_*^f\right) \right), \\
            c_2 &= 2\langle c, NAp^-\rangle + \langle c,a\rangle = 2\beta_{\Delta(x)}\left( dh\circ\left(\Delta_*^f\right),p^-\circ\left(\Delta_*^f\right) \right) + dh\left(\mathrm{Ad}_\Delta f\right).
        \end{split}
    \end{equation*}
    There exists a unique value of $\varepsilon$ as long as $c_1=0$ and $c_2\ne 0$. By assumption,
    \begin{equation*}
        \beta_{\Delta(x)}\left( dh\circ\left(\Delta_*^f\right),\cdot \right) = 0, \quad dh\left(\mathrm{Ad}_\Delta f\right) \ne 0,
    \end{equation*}
    and thus a unique solution exists.
\end{proof}
The extended reset map, \cref{eq:controlled_impact} is the natural extension of \cref{eq:impact_conditions} when \cref{ax:bounce} is modified to allow for the impact to not be the identity on the ``postion'' variables. This extension is the correct ``Hamiltonian jump condition'' in the hybrid maximum principle 
 \cite{10.1007/978-3-540-71493-4_50,10.1007/978-3-540-78929-1_3,7849195}.
\begin{theorem}[Hybrid Maximum Principle]\label{thm:HPMP}
    Let $\mathcal{HC}=(M,\mathcal{U},S,f,\Delta)$ be a hybrid control system with a piecewise continuous trajectory $x^*(\cdot)$ minimizing either \cref{prob:TI_TC} or \cref{prob:TI_FE}. Then there exists a piecewise continuous curve $p^*:[t_0,t_f]\to T^*M$ such that $\pi_M(p^*(t))=x^*(t)$ and $p^*$ is an integral curve of the lifted Hamiltonian system $(T^*M,S^*,X_{\tilde{H}},\tilde{\Delta})$. Moreover, for \cref{prob:TI_TC}, $p^*$ satisfies the mixed end conditions
    \begin{equation*}
        \pi_M(p^*(t_0)) = x_0, \quad p^*(t_f) = dg_{\pi_M(p^*(t_f))},
    \end{equation*}
    while for \cref{prob:TI_FE}, $p^*$ satisfies the boundary conditions
    \begin{equation*}
        \pi_M(p^*(t_0)) = x_0,\quad \pi_M(p^*(t_f)) = x_f.
    \end{equation*}
\end{theorem}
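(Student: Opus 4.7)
The plan is to adapt the classical needle-variation proof of the Pontryagin Maximum Principle to the hybrid setting, transporting the adjoint across each reset by means of the augmented differential of \cref{def:aug_dif}. The argument splits into three regimes: continuous arcs between resets, where the standard PMP applies verbatim; reset times, where a jump rule for $p^*$ must be derived; and the terminal time, where transversality fixes $p^*(t_f)$. The reduction of $\hat H$ to $H$ via the pointwise minimization is legitimate by the regularity assumptions on the minimizing control and the envelope theorem, so between resets $p^*$ automatically satisfies Hamilton's equations for $H$.

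The novel step is the derivation of the jump. Fix a reset time $t^*$ with $x^- := x^*(t^{*-}) \in S$ and $x^+ := \Delta(x^-)$, and introduce a needle variation of the control terminating just before $t^*$. Let $\delta x^-$ be the induced linearized perturbation at $t^{*-}$. The perturbed trajectory impacts $S$ at a shifted time $t^* + \epsilon\, \delta t + O(\epsilon^2)$ with $\delta t = -dh(\delta x^-)/dh(f(x^-))$, and a direct first-order expansion of $\Delta$ about $x^-(t^*)$, combined with the post-reset drift $f(x^+)$, yields $\delta x^+ = \Delta_*^f\, \delta x^-$, where $\Delta_*^f$ is precisely the augmented differential (this is just the saltation matrix computation repackaged via \cref{def:aug_dif}).

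Optimality of $x^*$ together with the usual separation argument then requires the cost covector to annihilate the transported cone of variations through all future times, which by duality gives an adjoint $p^+$ at $t^{*+}$ with $p^- = (\Delta_*^f)^* p^+$. Reading this off on $T_{x^-} S$, where $\Delta_*^f$ restricts to $\Delta_*$, we get $p^+ \circ \Delta_* = p^-|_{TS}$; since $\mathrm{Ann}(TS) = \mathrm{span}(dh)$ the residual component along the normal is $p^+ \circ \Delta_* = p^- + \varepsilon\, dh$ for a unique scalar $\varepsilon$. Evaluating the same relation on $f(x^-)$ and using $\Delta_*^f f(x^-) = f(x^+)$ produces $H(x^+, p^+) = H(x^-, p^-)$. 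These two conditions are exactly the coordinate form of \cref{eq:controlled_impact}, so $p^*$ jumps by the extended reset $\tilde\Delta$, whose existence and uniqueness are guaranteed by \cref{ass:reset}.

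Finally, the endpoint analysis is standard: in \cref{prob:TI_TC} the terminal cost contributes the transversality $p^*(t_f) = dg_{x^*(t_f)}$, while in \cref{prob:TI_FE} the fixed endpoint $\pi_M(p^*(t_f)) = x_f$ replaces it with no restriction on $p^*(t_f)$. The main technical obstacle I anticipate is controlling the needle-variation cone under successive applications of $\Delta_*^f$ across finitely many resets; convexity and closedness of the cone should be inherited through each jump by the linearity of $\Delta_*^f$ and the quasi-smooth dependence property, while the generic absence of Zeno trajectories (\Cref{sec:Zeno}) prevents pathological accumulation of resets on $[t_0, t_f]$. A secondary subtlety is the correct choice of topology on controls and the measurable regularity of needle variations through jumps, which can be handled by repeating the classical construction on each regular continuous segment and pasting via the derived jump rule.
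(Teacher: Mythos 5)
The paper offers no proof of \cref{thm:HPMP}: it is stated with citations to the hybrid maximum principle literature, and the jump condition is only \emph{motivated} beforehand by the Weierstrass--Erdmann corner-condition argument of \cref{thm:corner} extended to non-identity resets. Your needle-variation/saltation-matrix route is therefore a legitimate (and essentially the standard) way to actually prove it, and your identification of the saltation matrix with the augmented differential $\Delta_*^f$ of \cref{def:aug_dif} is correct. The specular half of your jump derivation is also fine: agreement of $p^-$ and $p^+\circ\Delta_*$ on $T_{x^-}S$ leaves exactly a one-dimensional ambiguity $\varepsilon\, dh$ since $\mathrm{Ann}(T_{x^-}S)=\mathrm{span}(dh)$.

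The genuine gap is in how you pin down $\varepsilon$. Transporting the adjoint by $(\Delta_*^f)^*$ and evaluating on $f(x^-)$, using $\Delta_*^f f(x^-)=f(x^+)$, gives $\langle p^-,f(x^-,u^-)\rangle=\langle p^+,f(x^+,u^+)\rangle$, which is conservation of $P(f)=H-\ell$, \emph{not} of the optimal Hamiltonian $H=\ell+\langle p,f\rangle$ required by \cref{eq:controlled_impact}; the two differ by $\ell^+-\ell^-$, which is generically nonzero, so your rule selects the wrong $\varepsilon$. The source of the error is that the adjoint jump is not the plain dual of the state saltation matrix: the shift $\delta t=-dh(\delta x^-)/dh(f(x^-))$ of the impact time also produces a first-order change $(\ell^--\ell^+)\,\delta t$ in the accumulated running cost, contributing an extra term $p_0\,\frac{\ell^+-\ell^-}{dh(f(x^-))}\,dh$ to $p^-$. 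The clean fix is to augment the state with the running cost $x^0$, $\dot x^0=\ell$, with the reset acting as the identity on $x^0$; the augmented saltation matrix then carries the augmented drift $(\ell^-,f^-)$ to $(\ell^+,f^+)$ and preserves the conjugate multiplier $p_0$, and duality applied to that direction yields $p_0\ell^-+\langle p^-,f^-\rangle=p_0\ell^++\langle p^+,f^+\rangle$, i.e.\ $H^-=H^+$ in the normal case $p_0=1$. Two smaller points: your appeal to \Cref{sec:Zeno} to rule out accumulation of resets is both unnecessary (the hypothesis that $x^*$ is piecewise continuous on $[t_0,t_f]$ already gives finitely many resets) and circular, since those genericity results rest on the symplecticity of $\tilde\Delta$, which is downstream of this theorem; and the well-posedness of the jump as a map $\tilde\Delta$ should be attributed to \cref{ass:reset}, as you do, rather than derived.
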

If we allow for time-dependent systems (where $\ell$, $f$, or $S$ may depend on time), the new jump condition is still prescribed by \Cref{eq:controlled_impact} and is illustrated in \Cref{sec:example_ball}.
For both time-independent and time-dependent problems, \cref{thm:sympectic_invariant} still holds and the proof is identical.
\begin{theorem}\label{thm:controlled_symplectic_invariant}
	Time-independent systems preserve the symplectic form and time-dependent systems preserve the form $\omega_H$. Consequently, both systems are volume-preserving.
\end{theorem}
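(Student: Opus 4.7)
The plan is to reduce this theorem to the previously established Theorem~\ref{thm:sympectic_invariant} by observing that the extended reset map $\tilde{\Delta}$ was defined in \cref{eq:controlled_impact} via exactly the same action-form preservation condition that characterized the mechanical impact map in \cref{thm:corner}. Since the proof of \cref{thm:sympectic_invariant} only used this action-form condition together with the identity $\omega_H = -d\vartheta_H$ and the naturality of the exterior derivative under pullback, essentially none of it depended on $\Delta$ being the identity on the base $Q$. Consequently, allowing $\tilde{\Delta}$ to move both positions and momenta (as the hybrid maximum principle requires) should not obstruct the argument.

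Concretely, I would verify the invariance conditions of \cref{eq:invariance} applied to the lifted hybrid system $(T^*M,S^*,X_{\tilde{H}},\tilde{\Delta})$. First, for the specular condition in the time-dependent case, I would apply $-d$ to both sides of \cref{eq:controlled_impact}, use that pullback commutes with $d$, and conclude $(\mathrm{Id}\times\tilde{\Delta})^*\omega_H = \iota^*\omega_H$ in one line. For the time-independent case I would separate \cref{eq:controlled_impact} into its $dx^i$ and $dt$ components to read off $\tilde{\Delta}^*\vartheta = \iota^*\vartheta$ and $H\circ\tilde{\Delta} = H$, and then repeat the same $-d$ computation to get $\tilde{\Delta}^*\omega = \iota^*\omega$.

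The energy condition splits into two subcases. In the time-dependent setting $\omega_H$ is presymplectic on $\mathbb{R}\times T^*M$ with the lifted vector field $\partial_t + X_H$ spanning its kernel, so $i_{X_{\tilde{H}}}\omega_H \equiv 0$ and the condition is trivially satisfied on both sides. In the time-independent setting, $i_{X_H}\omega = dH$, and the identity $H\circ\tilde{\Delta} = H$ extracted from the $dt$-component of \cref{eq:controlled_impact} gives $\tilde{\Delta}^*dH = \iota^*dH$, which is precisely the energy condition.

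Finally, preservation of the volume forms $\omega^n$ and $dt\wedge\omega_H^n$ follows by invoking the wedge-product closure statement at the end of the theorem cited before \cref{def:aug_dif}: the invariant forms under a hybrid flow form a $\wedge$-subalgebra. I expect no real obstacle here, since the only substantive content is the defining identity \cref{eq:controlled_impact} for $\tilde{\Delta}$; the mild subtlety is ensuring that $\tilde{\Delta}$ acts cleanly on momentum covectors even though $\Delta$ is a map between submanifolds of possibly different dimensions, and this is exactly what the augmented differential $\Delta_*^X$ introduced after \cref{eq:controlled_impact} is designed to handle.
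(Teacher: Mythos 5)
Your proposal is correct and follows essentially the same route as the paper, which simply notes that the proof of \cref{thm:sympectic_invariant} carries over verbatim: the specular condition comes from applying $-d$ to the defining identity \cref{eq:controlled_impact} for $\tilde{\Delta}$, the energy condition from $H\circ\tilde{\Delta}=H$ (time-independent) or $i_{X_H}\omega_H=0$ (time-dependent), and the volume forms from closure of invariant forms under the wedge product. Your added observation that nothing in that argument used $\Delta$ being the identity on positions is exactly the point the paper is implicitly relying on.
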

\section{Optimal Control for Mechanical Systems}\label{sec:mechanical}
It turns out that the degeneracy requirement in \cref{prop:quadratic_Hamiltonian} appears naturally in controlled mechanical systems. For a controlled mechanical system, the guard depends only on the position while the controls only directly influence the momenta. As such, the controls only ``weakly actuate'' the resets. 

A mechanical system with Hamiltonian $G:T^*Q\to\mathbb{R}$ subject to an external force $F$ has the form
\begin{equation}\label{eq:controlled_mechanics}
    i_X\omega - dG + \pi_Q^*F = 0,
\end{equation}
where $\omega\in\Omega^2(T^*Q)$ is the canonical symplectic form and $F\in\Omega^1(Q)$, see e.g. \S3.4 in \cite{blochNH}. This can be implemented as a control system where $F:\mathcal{U}\to\Omega^1(Q)$ is a smooth function on the set of controls. Applying the maximum principle to this system would result in notational confusion so we will fix notation to be:
\begin{enumerate}
    \itemsep0.5em 
    \item $Q$ is the configuration space for the mechanical system with phase space $M = T^*Q$, local coordinates in $M$ will be denoted by $x = (q,y)$ where $q\in Q$ and $y\in T^*_qQ$,
    \item $G:M\to \mathbb{R}$ will be the Hamiltonian of the uncontrolled mechanical system,
    \item $S\subset Q$ is the location of impact with guard
    \begin{equation*}
        \Sigma = \left\{ x\in M : \pi_Q^*dh(X_G) > 0 \right\} \subset M,
    \end{equation*}
    \item Local coordinates on $T^*M$ will be given by $(x,p)$ where $p = (p_q,p_y)$,
    \item The optimal Hamiltonian will be denoted by $H:T^*M\to\mathbb{R}$.
\end{enumerate}
\begin{remark}\label{rem:WAR}
    It is important to note that the set $\Sigma$ \textit{does not depend on the controls}. Let $X = X_G+X_F$ be the vector field defined by \cref{eq:controlled_mechanics} where $X_G$ is the drift vector field prescribed by the Hamiltonian and $X_F$ is the control vector field arising from $F$. Then $\pi_Q^*dh(X_F) = 0$ for \textit{any} choice of force $F$. In essence, this means that controls have no direct influence over resets, or resets are ``weakly actuated.'' Systems with this feature will be examined below in \Cref{sec:WAR}.
\end{remark}

The consequences of \cref{rem:WAR} are important and quite far reaching, and are summarized in \cref{thm:mechanical_control_features} below.
\begin{theorem}\label{thm:mechanical_control_features}
    The hybrid control system constructed above, $(M,\mathcal{U},\Sigma,X,\Delta)$ lifts to the Hamiltonian impact system $(T^*M,\Sigma^*,X_H,\tilde{\Delta})$ prescribed by \cref{thm:HPMP} such that
    \begin{enumerate}
        \item $\Sigma^* = \pi_M^{-1}(\Sigma)$,
        \item If the Hamiltonian has the form \cref{eq:quadratic_Hamiltonian}, then the conclusions of \cref{prop:quadratic_Hamiltonian} apply,
        \item Suppose that, in addition to the Hamiltonian having the form of \cref{eq:quadratic_Hamiltonian}, the reset $\Delta:\Sigma\to M$ is an embedding, $\Delta^*\beta = \beta$, and $\Delta^*V = V$. Then the unique solution to \cref{eq:controlled_impact} is a linear bundle isomorphism.
    \end{enumerate}
\end{theorem}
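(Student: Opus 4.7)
The plan is to combine the weak-actuation principle (Remark \ref{rem:WAR}) with the fact that mechanical impacts fix the configuration variable. Together these two ingredients concentrate the control-sensitive structure of $H$ into the momentum fibres of $M = T^*Q$, which are themselves tangent to $\Sigma$; the annihilator of $T\Delta(\Sigma)$ will therefore lie in the horizontal directions where the quadratic form $\beta$ vanishes.

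For part (1), I would start from the Hamilton equations on $T^*M$, which give $(\pi_M)_* X_H = X = X_G + X_F$. Since $X_F$ is vertical on $M = T^*Q$, the descent to $Q$ obeys $(\pi_Q)_* X = (\pi_Q)_* X_G$, and hence
\begin{equation*}
\pi_M^* \pi_Q^* dh\,(X_H) \;=\; dh\bigl((\pi_Q)_*(\pi_M)_* X_H\bigr) \;=\; \pi_Q^* dh(X_G),
\end{equation*}
which depends only on the base point $x$. The extended outward-pointing condition therefore coincides with the original one cutting out $\Sigma$, so $\Sigma^* = \pi_M^{-1}(\Sigma)$.

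For part (2), I would apply \cref{prop:quadratic_Hamiltonian} with $f = X_G$ (the $p$-independent piece of $(\pi_M)_* X_H$). Invertibility of $\Delta_*^{X_G}$ follows from \cref{ass:reset} together with transversality of $X_G$ to $\Sigma$ (outward) and to $\Delta(\Sigma)$ (inward, by \cref{ax:bounce} reflecting momenta). For the containment $\mathrm{Ann}(T\Delta(\Sigma)) \subset \ker\beta$, a short local computation using $\pi_Q \circ \Delta = \pi_Q|_\Sigma$ shows that $T_{(q,y')}\Delta(\Sigma) = T_q S \oplus T_q^* Q$, so its annihilator is spanned by $(dh,0)$ in the natural $(p_q,p_y)$ splitting of $T^*M$. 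Because the control force is purely vertical, the pointwise minimization over $u$ in $\hat{H}$ couples $u$ only to $p_y$, and hence the quadratic-in-$p$ term $\beta$ depends only on $p_y$. In particular $(dh,0) \in \ker\beta$, so both hypotheses of \cref{prop:quadratic_Hamiltonian} hold.

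For part (3), I would substitute $p^+ = A p^- + \varepsilon c$ (with $A = (\Delta_*^{X_G})^{-*}$ and $c = dh \circ A$) into the energy condition $H(\Delta(x),p^+) = H(x,p^-)$. Because $c \in \ker\beta$ by part (2), the $\varepsilon^2$ term and the cross term $\beta(Ap^-, c)$ drop out; the hypotheses $\Delta^*\beta = \beta$ and $\Delta^*V = V$ then cancel the remaining quadratic and constant pieces in $p^-$, reducing the energy equation to
\begin{equation*}
\varepsilon \;=\; \frac{\langle p^-,f(x)\rangle - \langle A p^-,f(\Delta(x))\rangle}{\langle c,f(\Delta(x))\rangle},
\end{equation*}
a linear functional of $p^-$. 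Hence $\tilde\Delta$ is fibrewise linear. For fibrewise bijectivity, I would argue by injectivity: $p^+ = 0$ would force $p^- = -\varepsilon\,dh$ by the specular equation, so $p^-_y = 0$ and $\beta_x(p^-,p^-)=0$; the energy identity then becomes $\varepsilon\,\pi_Q^*dh(X_G) = 0$, which forces $\varepsilon = 0$ since $X_G$ is outward-pointing on $\Sigma$, and thus $p^- = 0$. Combined with $\Delta$ being an embedding, this produces a linear bundle isomorphism covering $\Delta:\Sigma\to\Delta(\Sigma)$. The main obstacle is part (2): correctly identifying the ``$p_y$-only'' structure of $\beta$, which is a genuinely structural rather than computational fact and must be extracted from weak actuation.
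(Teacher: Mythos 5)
Your proposal is correct and follows essentially the same route as the paper: part (1) via the verticality of the control force, part (2) via the observation that weak actuation forces the normal covector $\pi_Q^*dh$ into $\ker\beta$ (the paper phrases this by evaluating $H$ at $p=\pi_Q^*dh$ and noting the minimization decouples, which is the same fact as your ``$\beta$ depends only on $p_y$''), and part (3) by continuing the $c_1\varepsilon^2+c_2\varepsilon+c_3=0$ computation of \cref{prop:quadratic_Hamiltonian} and using $\Delta^*\beta=\beta$, $\Delta^*V=V$ to make $\varepsilon$ linear in $p^-$. The only divergence is your direct injectivity argument for fibrewise invertibility (forcing $p^-=-\varepsilon\,\pi_Q^*dh$ and then $\varepsilon=0$ from the energy condition and outward-pointing of $X_G$), where the paper simply appeals to symplecticity of the hybrid flow; your version is more self-contained and equally valid.
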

\begin{proof}
    The set $\Sigma^*$ is the set of all ``outward pointing momenta'' and is given by
    \begin{equation*}
        \Sigma^* = \left\{ (x,p)\in T^*M: x\in \Sigma, \, \pi_M^*\pi_Q^*dh\left(X_H\right) > 0\right\}.
    \end{equation*}
    However, the quantity $\pi_M^*\pi_Q^*dh\left(X_H\right)$ does not depend on $p$ as discussed above which means that $\Sigma^* = \pi_M^{-1}(\Sigma)$.
    
    Next, suppose that the Hamiltonian has the form \cref{eq:quadratic_Hamiltonian}, then
    \begin{equation*}
        H = \min_{u} \left[ p\left(X(x,u)\right) + \ell(x,u) \right] = \beta_x(p,p) + p(f(x)) + V(x)
    \end{equation*}
    Evaluating at $p=\pi_Q^*dh$, we see that
    \begin{equation*}
        \begin{split}
            H(x,\pi_Q^*dh) &= \min_{u} \left[ \pi_Q^*dh(X(x,u)) + \ell(x,u)\right] \\ 
            &= \pi_Q^*dh(X(x,\cdot)) + \min_{u}\ell(x,u) \\
            &= \pi_Q^*dh\left( X(x,\cdot)\right) + \ell^*(x).
        \end{split}
    \end{equation*}
    There is no quadratic dependence on the momenta and thus $\pi_Q^*dh\in\ker\beta$. 
    To show the last point, we continue the computation from \cref{prop:quadratic_Hamiltonian}. The equation for $\varepsilon$ is given by $c_1\varepsilon^2+c_2\varepsilon+c_3=0$, and $c_1=0$ and $c_2 = dh\left(\mathrm{Ad}_\Delta f\right)$. The last coefficient is given by
    \begin{equation*}
        \begin{split}
            c_3 &= \beta_{\Delta(x)}\left( p\circ\left(\Delta_*^f\right)^{-1},p\circ\left(\Delta_*^f\right)^{-1}\right) - \beta_x(p,p) \\
            &\quad + P\left(\mathrm{Ad}_\Delta f\right) - P(f) + V(\Delta(x))-V(x).
        \end{split}
    \end{equation*}
    Under the assumptions of the theorem, the unique solution for $\varepsilon$ is linear in $p$. Invertibility follows from the fact that the hybrid flow is symplectic.
\end{proof}
For natural Hamiltonian systems, the reset map $\Delta:\Sigma\to M$ is an embedding, \cite{Ames06isthere,clark2021invariant}. However, the conservation of $\beta$ and $V$ generally do not occur.
\subsection{Natural Hamiltonian Systems}
When the underlying Hamiltonian $G:M\to \mathbb{R}$ is of natural type, the reset map has a special structure which, in turn, grants special structure to $\tilde{\Delta}$.
\begin{definition}[Natural Hamiltonians]
    A Hamiltonian $G:T^*Q\to\mathbb{R}$ is \textit{natural} if 
    \begin{equation*}
        G(q,y) = \frac{1}{2}g^{ij}y_iy_j + V(q),
    \end{equation*}
    where $g = (g_{ij})$ is a Riemannian metric on $Q$.
\end{definition}
When a Hamiltonian is natural, the reset map from \cref{eq:impact_conditions} is precisely
\begin{equation}\label{eq:natural_elastic}
    \Delta(q,y) = (q,R(q)y), \quad R(q)y = y - 2\frac{\tilde{g}(dh,y)}{\tilde{g}(dh,dh)}dh,
\end{equation} 
where $\tilde{g}$ is the corresponding metric on $T^*Q$. For each $q\in S$, the matrix $R(q)$ is Householder matrix which implies that $R = R^{-1}$. The differential $\Delta_*$ in the coordinates $(q,y)$ is given by
\begin{equation*}
    \Delta_* = \begin{bmatrix}
        \mathrm{Id} & 0 \\ R'y & R
    \end{bmatrix}, \quad R'y = \frac{\partial}{\partial q}\left( R(q)y\right).
\end{equation*}
Consequently, the extended reset map then has the form
\begin{equation*}
    \begin{split}
        p_q & \mapsto p_q - \left( R'y\right)^*R^*p_y + \varepsilon\cdot\pi_Q^*dh \\
        p_y &\mapsto R^*p_y,
    \end{split}
\end{equation*}
where $R^*p(v) = p(Rv)$ is the adjoint. 
The reset \cref{eq:natural_elastic} assumes that the mechanical impact is elastic. To include inelastic cases, the reset is modified to
\begin{equation*}
    R(q)y = y - \left(1+e\right) \frac{\tilde{g}(dh,y)}{\tilde{g}(dh,dh)}dh,
\end{equation*}
where $e\in[0,1]$ is the coefficient of restitution. Unlike in the elastic case, this matrix is no longer Householder and its inverse is given by
\begin{equation*}
    R^{-1}(q)y = y - \frac{(1+e)}{e}\frac{\tilde{g}(dh,y)}{\tilde{g}(dh,dh)}dh,
\end{equation*}
and the extended reset is now given by
\begin{equation*}
    \begin{split}
        p_q & \mapsto p_q - \left( R'y\right)^*\left(R^{-1}\right)^*p_y + \varepsilon\cdot \pi_Q^*dh, \\
        p_y & \mapsto \left(R^{-1}\right)^*p_y.
    \end{split}
\end{equation*}
\subsection{Systems with Weakly Actuated Resets}\label{sec:WAR}
The special structure of controlled mechanical systems which makes \cref{thm:mechanical_control_features} possible is \cref{rem:WAR} which can apply to more general situations than mechanical systems. 
Systems which satisfy \cref{rem:WAR} will be said to have \textit{weakly actuated resets}.
\begin{definition}[Weakly Actuated Reset]
    A hybrid control system,\\ $\mathcal{HC}=(M,\mathcal{U},S,f,\Delta)$, has a weakly actuated reset if for all $x\in S$, $\alpha\in \mathrm{Ann}(T_xS)$, and $\beta\in\mathrm{Ann}(T_{\Delta(x)}\Delta(S))$, the numbers
    $\alpha\left( f(x,u)\right)$ and $\beta\left( f(\Delta(x),u)\right)$ do not depend on $u\in\mathcal{U}$ and are nonzero.
\end{definition}
In the case of affine controls, the condition for weakly actuated resets is easily stated.
\begin{proposition}
    Suppose that the hybrid control system is an affine control system,
    \begin{equation*}
        f(x,u) = g_0(x) + \sum_{i=1}^k\, g_i(x)u_i.
    \end{equation*}
    Then the system has weakly actuated resets if for all $x\in S$, 
    \begin{equation*}
        \begin{split}
            g_0(x)\not\in T_xS, & \quad g_0(\Delta(x))\not\in T_{\Delta(x)}\Delta(S) \\
            g_i(x)\in T_xS, & \quad g_i(\Delta(x))\in T_{\Delta(x)}\Delta(S), \quad i=1,\ldots,k.
        \end{split}
    \end{equation*}
\end{proposition}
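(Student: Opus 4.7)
The proof is essentially a direct linear-algebra unpacking of the weakly-actuated-reset definition in the presence of affine dynamics. The plan is to fix $x \in S$, substitute $f(x,u) = g_0(x) + \sum_i g_i(x) u_i$ into the expression $\alpha(f(x,u))$ for an arbitrary $\alpha \in \mathrm{Ann}(T_x S)$, and then read off the two required conclusions (independence on $u$, and nonvanishing) as equivalent coordinate-free statements.

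First, by linearity of $\alpha$,
\begin{equation*}
    \alpha(f(x,u)) \;=\; \alpha(g_0(x)) + \sum_{i=1}^k \alpha(g_i(x))\, u_i.
\end{equation*}
This is an affine function of $u \in \mathcal{U}$, and since $\mathcal{U} \subset \mathbb{R}^m$ has nonempty interior (or at least contains affinely independent points, which we may assume since it is the admissible control set), it is independent of $u$ if and only if $\alpha(g_i(x)) = 0$ for every $i = 1, \ldots, k$. Using the standard identification $T_x S = \{ v \in T_x M : \alpha(v) = 0 \text{ for all } \alpha \in \mathrm{Ann}(T_x S)\}$, this vanishing condition, quantified over all $\alpha \in \mathrm{Ann}(T_x S)$, is exactly $g_i(x) \in T_x S$, which is one of our hypotheses.

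For the nonvanishing statement, I would invoke codimension: because $S$ has codimension one, $\mathrm{Ann}(T_x S)$ is a one-dimensional subspace of $T_x^* M$, so any nonzero $\alpha \in \mathrm{Ann}(T_x S)$ is a nonzero scalar multiple of any other. The hypothesis $g_0(x) \notin T_x S$ therefore produces (for some, hence every nonzero) $\alpha \in \mathrm{Ann}(T_x S)$ the inequality $\alpha(g_0(x)) \neq 0$. Combining with the previous paragraph, the residual (constant-in-$u$) value $\alpha(f(x,u)) = \alpha(g_0(x))$ is nonzero, as required by the definition of weakly actuated reset.

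Finally, the argument at $\Delta(x)$ with $\beta \in \mathrm{Ann}(T_{\Delta(x)} \Delta(S))$ is verbatim the same, using the hypotheses $g_0(\Delta(x)) \notin T_{\Delta(x)} \Delta(S)$ and $g_i(\Delta(x)) \in T_{\Delta(x)} \Delta(S)$ in place of the corresponding statements at $x$. There is no real obstacle here; the only mild subtlety is ensuring that $\mathcal{U}$ is rich enough that affine-in-$u$ independence forces the coefficients to vanish, which is the standing assumption on the admissible control set. In fact the three-step derivation shows the conditions are also \emph{necessary} for weakly actuated resets in the affine case, though the proposition as stated only requires the sufficiency direction.
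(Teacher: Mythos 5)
Your proof is correct, and it is exactly the intended argument: the paper states this proposition without proof, treating it as an immediate unpacking of the definition, and your three steps (linearity of $\alpha$, vanishing of $\alpha(g_i(x))$ from $g_i(x)\in T_xS$, and nonvanishing of $\alpha(g_0(x))$ from the one-dimensionality of $\mathrm{Ann}(T_xS)$) supply precisely that. The only caveat is your closing claim of necessity: the ``only if'' direction of your first paragraph requires $\mathcal{U}$ to affinely span $\mathbb{R}^m$ (or at least $\mathbb{R}^k$ in the relevant coordinates), which the paper does not assume about the closed set $\mathcal{U}$; this does not affect the sufficiency direction that the proposition actually asserts.
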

The conclusions of \cref{thm:mechanical_control_features} apply to any system with weakly actuated resets.
\section{Hybrid Lagrangian Submanifolds}\label{sec:hybrid_Lagrange}
The necessary conditions in the hybrid maximum principle, \cref{thm:HPMP}, specify terminal conditions on the momenta; for problems with free end points but with terminal cost, this manifests as $p(t_f) = dg_{x_f}$, while for fixed end-points, we require the terminal condition $\pi_M(p(t_f)) = x_f$.
These end conditions state that the terminal momenta must lie within a specified submanifold of $T^*M$
\begin{equation*}
    \begin{array}{lr}
        p(t_f) \in \Gamma_{dg} = \left\{ (x,dg_x): x\in M\right\}, & \text{(\cref{prob:TI_TC})} \\
        p(t_f) \in T_{x_f}^*M, & \text{(\cref{prob:TI_FE})}
    \end{array}
\end{equation*}
Both of these submanifolds are examples of \textit{Lagrangian submanifolds} in the symplectic manifold $T^*M$. In our study of Lagrangian submanifolds, we will always assume that the ambient symplectic manifold is a cotangent bundle with the canonical symplectic form.

\begin{definition}[Lagrangian Submanifolds]
    A submanifold $\mathcal{L}\subset T^*M$ is a Lagrangian submanifold if $\omega|_\mathcal{L}=0$ and $\dim\mathcal{L} = \dim M$.
\end{definition}
Both of the terminal manifolds for the optimal control are Lagrangian via the following well-known proposition, see e.g. \cite{abrahammarsden}.
\begin{proposition}
    For a cotangent bundle $T^*M$ with the canonical symplectic structure, the following submanifolds are Lagrangian:
    \begin{enumerate}
        \item $T_x^*M$ for any $x\in M$,
        \item $M$ viewed as the zero-section, and
        \item $\Gamma_\alpha = \left\{ (x,\alpha_x): x\in M \right\}$ for a closed 1-form $\alpha$.
    \end{enumerate}
\end{proposition}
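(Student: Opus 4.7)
The plan is to verify both defining conditions of a Lagrangian submanifold (vanishing of the symplectic form on the submanifold and dimension equal to $\dim M$) for each of the three candidates, using the tautological 1-form $\theta \in \Omega^1(T^*M)$. In canonical coordinates $\theta = p_i\, dx^i$ and $\omega = -d\theta$; intrinsically $\theta_{(x,p)}(v) = \langle p, (\pi_M)_* v \rangle$. The essential tool is the reproducing property: for any 1-form $\beta \in \Omega^1(M)$, regarded as a section $s_\beta : M \to T^*M$, one has $s_\beta^* \theta = \beta$. The dimension count is immediate in all three cases, since each of the submanifolds is parametrized smoothly by $n = \dim M$ parameters (the fiber coordinates $p_i$, the base coordinates $x^i$, or the base coordinates $x^i$ again for the graph), so only the isotropy $\omega|_{\mathcal{L}} = 0$ needs real argument.

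For item (1), the fiber $T_x^*M$ is the preimage $\pi_M^{-1}(x)$, so the inclusion $\iota : T_x^*M \hookrightarrow T^*M$ satisfies $\iota^* dx^i = 0$ for every $i$. Since $\theta$ is a linear combination of the $dx^i$, this gives $\iota^* \theta = 0$, hence $\iota^* \omega = -d(\iota^*\theta) = 0$. For item (2), the zero section is the image of $z(x) = (x, 0)$, and $p_i \circ z \equiv 0$ makes $z^* \theta = 0$ equally trivially, so $z^*\omega = 0$. For item (3), apply the reproducing property to the graph embedding $s_\alpha(x) = (x, \alpha_x)$: we obtain $s_\alpha^* \theta = \alpha$, and therefore
\begin{equation*}
s_\alpha^* \omega \;=\; -s_\alpha^* d\theta \;=\; -d(s_\alpha^* \theta) \;=\; -d\alpha \;=\; 0,
\end{equation*}
where the closedness of $\alpha$ is precisely what is used in the final equality. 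This is also the only point at which the hypothesis that $\alpha$ is closed plays a role, explaining why it cannot be weakened.

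The only step requiring any thought is the reproducing property $s_\beta^* \theta = \beta$, which can be checked either coordinate-wise (both sides equal $\beta_i(x)\, dx^i$) or intrinsically by unwinding the definitions: for $v \in T_x M$, $(s_\beta^*\theta)_x(v) = \theta_{(x,\beta_x)}((s_\beta)_* v) = \langle \beta_x, (\pi_M)_* (s_\beta)_* v \rangle = \langle \beta_x, v \rangle$ since $\pi_M \circ s_\beta = \mathrm{Id}_M$. Cases (1) and (2) can be viewed as degenerate instances of a related construction applied to the projection and the zero 1-form respectively, but treating them directly with the coordinate form $\theta = p_i\, dx^i$ is the cleanest route.
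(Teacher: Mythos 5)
Your proof is correct and complete. The paper itself does not prove this proposition --- it is stated as a well-known fact with a citation to Abraham--Marsden --- and your argument is precisely the standard one from that reference: reduce everything to the tautological form $\theta$, use $\iota^*\theta=0$ on fibers and on the zero section, and use the reproducing property $s_\alpha^*\theta=\alpha$ together with $d\alpha=0$ for the graph case. Your sign convention $\omega=-d\theta$ also matches the one the paper uses elsewhere (e.g.\ in the proof of \cref{thm:sympectic_invariant}), and your intrinsic verification of $s_\beta^*\theta=\beta$ is the right place to concentrate the effort, since it is the only nontrivial step.
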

Lagrangian submanifolds propagate well under the flow of (non-hybrid) Hamiltonian systems.
\begin{proposition}
     Let $\mathcal{L}\subset T^*M$ be a Lagrangian submanifold, and $H:T^*M\to\mathbb{R}$ a Hamiltonian with flow $\varphi_t$. Then
    \begin{enumerate}
        \item $\varphi_t(\mathcal{L})$ remains a Lagrangian submanifold for all $t$, and
        \item if the submanifold has constant energy, i.e. $H|_\mathcal{L}$ is constant, then it is invariant under the flow, $\varphi_t(\mathcal{L}) = \mathcal{L}$.
    \end{enumerate}
\end{proposition}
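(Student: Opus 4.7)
The plan rests on two standard facts: Hamiltonian flows are symplectomorphisms, and a Lagrangian subspace of a symplectic vector space is its own symplectic orthogonal.

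For part (1), I would first verify that $\varphi_t^*\omega = \omega$ via Cartan's magic formula applied to $X_H$:
\begin{equation*}
\mathcal{L}_{X_H}\omega = d(i_{X_H}\omega) + i_{X_H}d\omega = d(dH) + 0 = 0.
\end{equation*}
The Lagrangian property is then straightforward to transport along the flow. Given $p\in\mathcal{L}$ and $v,w\in T_{\varphi_t(p)}\varphi_t(\mathcal{L})$, write $v=(\varphi_t)_*\tilde{v}$ and $w=(\varphi_t)_*\tilde{w}$ with $\tilde{v},\tilde{w}\in T_p\mathcal{L}$; then $\omega(v,w) = (\varphi_t^*\omega)(\tilde{v},\tilde{w}) = \omega(\tilde{v},\tilde{w}) = 0$. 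Since $\varphi_t$ is a diffeomorphism, $\dim\varphi_t(\mathcal{L}) = \dim\mathcal{L} = \dim M$, so $\varphi_t(\mathcal{L})$ satisfies both requirements to be Lagrangian.

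For part (2), my strategy is to show that $X_H$ is everywhere tangent to $\mathcal{L}$; then $\mathcal{L}$ is invariant under the flow by the standard ODE argument (integral curves through points of $\mathcal{L}$ never leave $\mathcal{L}$, and applying the same reasoning to $-X_H$ gives equality $\varphi_t(\mathcal{L})=\mathcal{L}$ rather than mere inclusion). The tangency itself follows from a one-line computation: constancy of $H|_\mathcal{L}$ gives $dH_p(v) = 0$ for every $v\in T_p\mathcal{L}$, and combining this with $i_{X_H}\omega = dH$ yields
\begin{equation*}
\omega_p\bigl(X_H(p),v\bigr) = 0 \quad \text{for all } v\in T_p\mathcal{L},
\end{equation*}
i.e.\ $X_H(p)\in (T_p\mathcal{L})^{\omega}$. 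The Lagrangian condition then closes the argument via $(T_p\mathcal{L})^{\omega} = T_p\mathcal{L}$.

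There is no serious obstacle; the one linear-algebra input to double-check is that a Lagrangian subspace coincides with its symplectic orthogonal, which is immediate from the dimension identity $\dim L + \dim L^{\omega} = \dim V$ together with the isotropy $L\subset L^{\omega}$ built into the Lagrangian definition. Everything else reduces to naturality of pullbacks and the defining equation of the Hamiltonian vector field.
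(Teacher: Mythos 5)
Your argument is correct and is the standard one; the paper states this proposition without proof (treating it as well known), and its proof of the subsequent hybrid version rests on exactly the fact you establish first, namely that $\varphi_t^*\omega=\omega$ via Cartan's formula and closedness of $\omega$. The only point worth flagging is that in part (2) tangency of $X_H$ to $\mathcal{L}$ yields the global identity $\varphi_t(\mathcal{L})=\mathcal{L}$ only when the restricted flow on $\mathcal{L}$ is complete (e.g.\ when $\mathcal{L}$ is closed in $T^*M$, as for the fibers $T^*_{x}M$ and the graphs $\Gamma_{dg}$ actually used in the paper); otherwise the ODE argument gives only local invariance, a caveat the paper also leaves implicit.
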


Extending this idea to hybrid systems results in ``hybrid Lagrangian submanifolds'' which was introduced in \cite{clarkoprea}.
\begin{definition}[Hybrid Lagrangian Submanifolds]
    Let $\mathcal{H} = (T^*M,S^*,X_H,\tilde{\Delta})$ be a hybrid Hamiltonian system where $\tilde{\Delta}$ satisfies \cref{eq:controlled_impact}. A submanifold (with boundary) $\mathcal{L}\subset T^*M$ is a hybrid Lagrangian submanifold if
    \begin{enumerate}
        \item $\mathcal{L}\setminus\partial\mathcal{L}\subset T^*M$ is a Lagrangian submanifold, and
        \item $\partial\mathcal{L}\subset S^*\cup\tilde{\Delta}(S^*)$ such that $\tilde{\Delta}(\partial\mathcal{L}\cap S^*) = \partial\mathcal{L}\cap\tilde{\Delta}(S^*)$.
    \end{enumerate}
\end{definition}
If the system has weakly actuated resets, then the
condition $\partial\mathcal{L}\subset S^*\cup\tilde{\Delta}(S^*)$ is equivalent to $\pi_M(\partial\mathcal{L})\subset S\cup\Delta(S)$.
\begin{figure}
    \centering
    \begin{tikzpicture}
        \draw[<->,black] (-1,0) -- (5,0);
        \draw[<->,black] (0,-1) -- (0,4);
        \node[below] at (5,0) {$M$};
        \node[left] at (0,4) {$T^*M$};
        \draw[blue] (1,-1) -- (1,4);
        \draw [fill] (1,0) circle [radius=0.07];
        \node[below right] at (1,0) {$S$};
        \node[right, blue] at (1,4) {$\pi_M^{-1}(S)$};
        \draw[red] (4,-1) -- (4,4);
        \draw [fill] (4,0) circle [radius=0.07];
        \node[below left] at (4,0) {$\Delta(S)$};
        \node[right, red] at (4,4) {$\pi_M^{-1}\left( \Delta(S)\right)$};
        \draw[dotted,thick] (3,4) to [out=230,in=30] (2.75,3.7);
        \draw[thick] (2.75,3.7) to [out=220,in=40] (1,3);
        \draw[thick] (4,3) to [out=250, in=120] (1.75,2.15) to [out=300, in=90] (3.3,1.75) to [out=270, in=45] (1,1.25);
        \draw[thick] (4,1) to [out=230, in=10] (2.5,0.5);
        \draw[thick,dotted] (2.5,0.5) to [out=10,in=45] (2,0.25);
        \node[above] at (2.5,2) {$\mathcal{L}$};
        \draw[->,thick,dashed] (1,3) to [out=-130,in=45] (4,3);
        \draw[->,thick,dashed] (1,1.25) to [out=-130,in=45] (4,1);
    \end{tikzpicture}
    \caption{An example hybrid Lagrangian $\mathcal{L}\subset T^*M$ for a system with weakly actuated resets. The dashed lines represent $\tilde{\Delta}$.}
    \label{fig:hybrid_Lagrangian}
\end{figure}
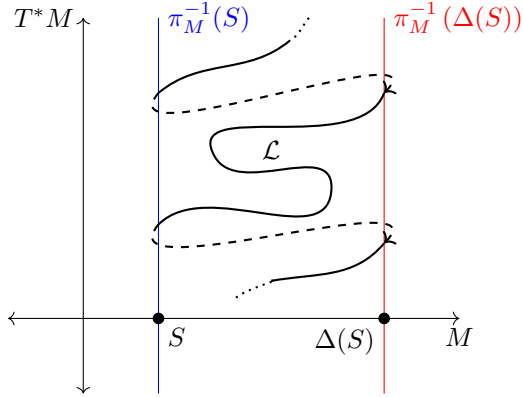
\begin{proposition}
    Let $\mathcal{L}\subset T^*M$ be a hybrid Lagrangian submanifold and let $\varphi_t^\mathcal{H}$ be the flow of the hybrid Hamiltonian system. Then
    \begin{enumerate}
        \item for all $t$, $\varphi_t^\mathcal{H}(\mathcal{L})$ is a hybrid Lagrangian submanifold, and
        \item if $\mathcal{L}$ has constant energy, then $\varphi_t^\mathcal{H}(\mathcal{L}) = \mathcal{L}$.
    \end{enumerate}
\end{proposition}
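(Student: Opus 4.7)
Both claims are hybrid analogues of classical symplectic facts, and I would reduce each to two ingredients already established in the excerpt: (i) the continuous flow $\varphi_t$ of $X_H$ preserves $\omega$ and $H$, so it sends Lagrangian submanifolds to Lagrangian submanifolds; and (ii) by \cref{eq:controlled_impact} together with \cref{thm:controlled_symplectic_invariant}, the reset $\tilde{\Delta}$ pulls back the action form $\vartheta_H$ from $\tilde{\Delta}(S^*)$ to $S^*$, and consequently preserves both $\omega$ and $H$ at the reset. In particular, the augmented differential $\tilde{\Delta}_*^{X}$ is a symplectic linear map.

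\textbf{Part 1.} I would track $\mathcal{L}$ under $\varphi_t^{\mathcal{H}}$ through alternating smooth and reset phases. On each smooth phase, $\varphi_s$ is a symplectomorphism of $T^*M$ and thus maps the Lagrangian interior $\mathcal{L}\setminus\partial\mathcal{L}$ to a Lagrangian submanifold; the boundary piece originally lying in $\tilde{\Delta}(S^*)$ is pushed transversally into the interior, while a new boundary component forms when an interior portion of the flowed set reaches $S^*$. At that instant I would apply $\tilde{\Delta}$: since $\tilde{\Delta}_*^{X}$ is symplectic, the image is again a Lagrangian piece whose new boundary lies in $\tilde{\Delta}(S^*)$. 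The identity $\tilde{\Delta}(\partial \varphi_t^{\mathcal{H}}(\mathcal{L}) \cap S^*) = \partial \varphi_t^{\mathcal{H}}(\mathcal{L}) \cap \tilde{\Delta}(S^*)$ is then automatic from the very construction of the hybrid flow, so the resulting set again satisfies both defining conditions of a hybrid Lagrangian submanifold.

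\textbf{Part 2.} For the constant-energy case, I would invoke the standard fact that if $H|_{\mathcal{L}}$ is constant and $\mathcal{L}$ is Lagrangian, then $X_H$ is tangent to $\mathcal{L}$: for any $v\in T_p\mathcal{L}$, $\omega(X_H,v) = dH(v) = 0$, so $X_H\in (T_p\mathcal{L})^{\omega}=T_p\mathcal{L}$. Hence the continuous flow preserves $\mathcal{L}$ set-wise. At a reset, the matching condition in the definition identifies $\partial\mathcal{L}\cap S^*$ with $\partial\mathcal{L}\cap\tilde{\Delta}(S^*)$ via $\tilde{\Delta}$, and the energy identity $H\circ\tilde{\Delta}=H|_{S^*}$ guarantees that the reset image remains in the same level set and therefore inside $\mathcal{L}$. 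Combined with part 1, this yields $\varphi_t^{\mathcal{H}}(\mathcal{L})=\mathcal{L}$.

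\textbf{Main obstacle.} The most delicate step is the reset moment in part 1: one must verify that after $\tilde{\Delta}$ is applied to the boundary piece that just met $S^*$, the reassembled object is genuinely Lagrangian and still satisfies the boundary-matching condition of the definition. This reduces to checking that $\tilde{\Delta}_*^{X}$ carries $T_p(\mathcal{L}\cap S^*)$ together with the flow direction $X_H(p)$ to a Lagrangian subspace of $T(T^*M)$, which is exactly the symplectic nature of the augmented differential guaranteed by \cref{eq:controlled_impact}.
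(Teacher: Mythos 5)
Your proof is correct and follows essentially the same route as the paper: the paper's own proof is a one-line appeal to the fact that the hybrid flow (both the continuous Hamiltonian flow and the extended reset $\tilde{\Delta}$) preserves the symplectic form, and your argument is simply a fully spelled-out version of that, with the standard tangency argument $X_H\in(T_p\mathcal{L})^{\omega}=T_p\mathcal{L}$ handling the constant-energy case.
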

\begin{proof}
    This follows directly from the fact that the hybrid flow preserves the symplectic form.
\end{proof}
\subsection{Application to the control problem}
The hybrid maximum principle states that necessary conditions for an optimal solution must satisfy the boundary conditions
\begin{equation*}
    p(0) = T_{x_0}^*M, \quad p(t_f)\in\mathcal{L},
\end{equation*}
where $\mathcal{L} = \Gamma_{dg}$ or $T_{x_f}^*M$ depending on the problem. Both of these sets are hybrid Lagrangian submanifolds and a solution to $p(0)$ can be found by considering the intersection
\begin{equation}\label{eq:Lagrangian_intersection}
    p(0) \in \mathcal{T}_{x_0} := T_{x_0}^*M \cap \varphi_{-t_f}^\mathcal{H}(\mathcal{L}),
\end{equation}
which is an intersection of hybrid Lagrangian submanifolds. A reason why the hybrid maximum principle is only necessary rather than sufficient is that this intersection may contain multiple solutions (which correspond to critical points of the optimal control problem).

It is desirable for the intersection \cref{eq:Lagrangian_intersection} to be transverse; if $\mathcal{L}$ is compact, then the intersection is necessarily finite. A way to show this is via the \textit{hybrid variational equation} \cite{footslip,hybridPB}. Let $p_0\in \mathcal{T}_{x_0}$ and consider its (hybrid) trajectory under $\varphi_t^\mathcal{H}$, $\gamma(t) = (x(t),p(t))$. Let $\mathcal{I}\subset [0,t_f]$ be the time-set where resets happen, i.e.
\begin{equation*}
    \gamma(t)\in S^* \iff t\in\mathcal{I}.
\end{equation*}
Then the hybrid variational equation is the hybrid time-dependent linear system
\begin{equation}\label{eq:hybrid_variaional}
    \begin{cases}
        \dot{\Phi} = A(t)\Phi, & t\not\in\mathcal{I}, \\
        \Phi^+ = \left( \tilde{\Delta}_*^{X_H} \right)\Phi, & t\in\mathcal{I}.
    \end{cases}
\end{equation}
If for all tangent vectors $v\in T_{p_0}\left( T_{x_0}^*M\right)\subset TT^*M$, we have that $\Phi(t_f)\cdot v\not\in T\mathcal{L}$, then transversality holds. 
\section{Zeno Executions}\label{sec:Zeno}
In the classical theory of ordinary differential equations, solutions are only guaranteed to exist for a short time duration as trajectories may ``blow up to infinity'' as in the case $\dot{x} = x^2+1$. This phenomenon may still occur in hybrid systems, but there exists another mechanism that can break solutions: Zeno.
A trajectory is Zeno if it undergoes infinitely many resets in a finite amount of time. 

\begin{definition}[Zeno trajectories]
    Let $\varphi_t^\mathcal{H}$ be a hybrid flow. A point $x\in M$ has a Zeno trajectory if there exists an increasing sequence of times $\{t_i\}_{i=0}^\infty$ such that $\varphi_{t_i}^\mathcal{H}(x)\in S$ for all $i$ and $t_i\to t_{\infty}<\infty$.
\end{definition}

It turns out that there exist two fundamentally different classes of Zeno; one is essentially a disguised version of ``blow up to infinity'' while the other encapsulates ``true Zeno.''
\begin{definition}[Zeno types]
    Let $x\in M$ have a Zeno trajectory. The trajectory is spasmodic if the sequence $\{\varphi_{t_i}^\mathcal{H}(x)\}_{i=0}^\infty$ escapes every compact set as $i\to\infty$. The trajectory is called steady if it is not spasmodic.
\end{definition}

If the state-space for a hybrid system is compact (or admits compact invariant sets), then the only Zeno trajectories that may occur are of steady type. To assist with eliminating steady Zeno, an additional regularity assumption is needed for the reset map (which is satisfied for mechanical resets).

\begin{definition}[Boundary Identity Property]
    $\Delta$ extends to the identity on the boundary of $S$ in $\Delta(S)$, i.e. if $\{s_k\}$ is a sequence in $S$ and $s_k\to s \in \overline{S}\cap\overline{\Delta(S)}$ then $\Delta(s_k)\to s$.
\end{definition}
The boundary identity property is important for studying Zeno as it allows Zeno solutions to be ``completed,'' \cite{Ames06isthere}. Although it is important for $\Delta$ to have the boundary identity property, the actual map that requires this is the extended reset, $\tilde{\Delta}$.
\begin{proposition}
    Suppose that the reset map in the hybrid control system possesses the boundary identity property. If \cref{eq:controlled_impact} has a unique solution that extends continuously to $\overline{S^*}\cap\overline{\tilde{\Delta}(S^*)}$, then it also possesses the boundary identity property.
\end{proposition}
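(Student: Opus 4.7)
The plan is to combine the commutative diagram in \cref{eq:controlled_impact} with the continuous extension of $\tilde{\Delta}$ to the boundary and the boundary identity property of $\Delta$. First, I would take an arbitrary sequence $\{s_k^*\}\subset S^*$ converging to $s^*\in\overline{S^*}\cap\overline{\tilde{\Delta}(S^*)}$, and set $x_k:=\pi_M(s_k^*)$ and $x:=\pi_M(s^*)$. Continuity of $\pi_M$ and the fact that $\pi_M$ sends $S^*$ to $S$ and $\tilde{\Delta}(S^*)$ to $\Delta(S)$ show that $x_k\to x$ with $x\in\overline{S}\cap\overline{\Delta(S)}$, so the boundary identity property of $\Delta$ delivers $\Delta(x_k)\to x$.

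Second, I would use the hypothesised continuous extension of $\tilde{\Delta}$ to conclude $\tilde{\Delta}(s_k^*)\to\tilde{\Delta}(s^*)$. Applying $\pi_M$ to this limit and invoking the commutative square, $\pi_M(\tilde{\Delta}(s_k^*)) = \Delta(x_k) \to x$, so $\tilde{\Delta}(s^*)$ lies in the same cotangent fibre $T^*_xM$ as $s^*$. What remains is to upgrade this to the honest equality $\tilde{\Delta}(s^*)=s^*$, i.e., to show the extension acts trivially at boundary points.

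Third, I would obtain this equality by carrying the corner conditions $p^+\circ\Delta_*^X = p^- + \varepsilon\,dh$ and $H(p^+)=H(p^-)$ through the limit $s_k^*\to s^*$. At the boundary point $x$, the trivial pair $p^+=p^-$, $\varepsilon=0$ is a formal solution of the limit equation because $\Delta$ extends continuously to the identity at $x$. The uniqueness of the solution to \cref{eq:controlled_impact} on $S^*$ guaranteed by \cref{ass:reset}, combined with the continuity of the extension, then forces $\tilde{\Delta}(s^*)$ to coincide with this trivial solution, which is precisely the boundary identity property for $\tilde{\Delta}$.

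The main obstacle I anticipate is this last uniqueness step: the augmented differential $\Delta_*^X$ need not converge to the identity, since the boundary identity property only controls $\Delta$ pointwise and not its derivative, so one cannot simply read off $p^+=p^-$ as the unique root of the limiting linear/quadratic equation. To sidestep this, I would invoke \cref{thm:controlled_symplectic_invariant}: $\tilde{\Delta}$ is a symplectomorphism, so its continuous extension is symplectic wherever smoothness persists, and a symplectomorphism of $T^*M$ whose base projection is the identity must restrict to a fibrewise translation by a closed one-form. Combining this with the corner relation $p^+-p^-\in\mathrm{span}(dh)$ and energy conservation $H(\tilde{\Delta}(s^*))=H(s^*)$ pins the translation down to $\varepsilon\,dh$ with $\varepsilon=0$, finishing the proof.
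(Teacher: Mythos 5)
The paper states this proposition without proof, so there is no argument of the authors' to compare yours against; judged on its own terms, your proposal has a genuine gap in the last step --- one you partially diagnose yourself but do not close. The reduction is fine: continuity of the extension reduces the claim to $\tilde{\Delta}(s^*)=s^*$ at boundary points, and the commutative square together with the boundary identity property of $\Delta$ correctly places $\tilde{\Delta}(s^*)$ in the fibre $T^*_xM$. The trouble is the fibre component. On $S^*$ the momentum relation is $p^+\circ\Delta_*^X=p^-+\varepsilon\,dh$, so the ``limit equation'' at $s^*$ is $p^+_\infty\circ L=p^-_\infty+\varepsilon_\infty\,dh$ with $L=\lim_k\Delta_*^X(x_k)$. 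Your claim that $p^+=p^-$, $\varepsilon=0$ is a formal solution of this limit equation is unjustified unless $L=\mathrm{Id}$: substituting gives $p^-_\infty\circ L=p^-_\infty$, which fails whenever $p^-_\infty$ does not annihilate $\operatorname{im}(L-\mathrm{Id})$. The boundary identity property is purely $C^0$ and gives no control over $L$; for instance, a mechanical reflection $R(q)$ fixes a grazing point ($R(q)y=y$ when $\tilde{g}(dh,y)=0$) while $\Delta_*$ is still a nontrivial reflection, so the fibre map $p_y\mapsto R^*p_y$ of $\tilde{\Delta}$ is not the identity there. Hence uniqueness of the solution on $S^*$ does not ``force'' the trivial solution in the limit.

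The symplectic patch does not repair this. \cref{thm:controlled_symplectic_invariant} gives only $\tilde{\Delta}^*\omega=\iota^*\omega$ on the hypersurface $S^*$, i.e.\ preservation of the restricted (degenerate) two-form; $\tilde{\Delta}$ is not a symplectomorphism between open subsets of $T^*M$, and its extension is only defined on the codimension-$\ge 2$ set $\overline{S^*}\cap\overline{\tilde{\Delta}(S^*)}$. The classification of fibre-preserving symplectomorphisms as translations by closed one-forms concerns diffeomorphisms covering the identity over an \emph{open} subset of the base, whereas here ``covers the identity'' is known only on the boundary fibre itself. You therefore never legitimately obtain $p^+-p^-\in\operatorname{span}(dh)$ at the boundary, which is the input your concluding energy argument requires. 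The gap looks essential rather than technical: taking $S=\{x=0,\ y>0\}$, $\Delta(0,y)=(y,y)$, drift $\partial_x$ and $H=p_x$, the map $\Delta$ has the boundary identity property and \cref{eq:controlled_impact} has the unique solution $p_x^+=p_x^-$, $p_y^+=p_y^--p_x^-$, which extends continuously (it is independent of $y$) yet is not the identity on the fibre over the corner $(0,0)$. Any complete proof must therefore invoke some control on $\Delta_*^X$ at the boundary (a $C^1$ strengthening of the boundary identity property), not merely the pointwise property you use.
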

For a hybrid system whose reset obeys the boundary identity property, (steady) Zeno requires ``dissipation.'' This should not be too surprising as the bouncing ball is Zeno when the coefficient of restitution is strictly less than one. 

\begin{theorem}[\cite{clark2021invariant}]\label{thm:Zeno}
    Suppose that $\mathcal{H}$ is a smooth compact hybrid dynamical system whose reset obeys the boundary identity property. 
    Let $\mathcal{N}\subset M$ be the set of all points that have a (steady) Zeno trajectory. 
    If $\mathcal{H}$ preserves a smooth measure, $\mu$, then $\mu(\mathcal{N})=0$, i.e. Zeno almost never occurs.
\end{theorem}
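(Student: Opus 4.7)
The plan is to reduce the continuous-time Zeno question to a discrete-time question on the guard $S$ via the Poincaré return map, and then apply Poincaré recurrence to the induced invariant measure on $S$. Let $X$ be the generating vector field and let $\mu$ be the invariant smooth measure, which we identify with its volume form. First I would construct $P:\tilde S \to S$, the hybrid first-return map, where $\tilde S\subset S$ is the (full measure) set of points in $S$ whose forward hybrid trajectory meets $S$ again. A standard cross-section argument shows that $P$ preserves the induced measure $\nu = |i_X \mu|_S|$ on $S$; invariance under both the continuous flow (Liouville) and the reset (the energy condition from the earlier invariance theorem) is exactly what makes this work. Compactness of $M$ together with condition (5) of the HDS definition (which bounds $S$ inside a compact manifold) gives $\nu(S) < \infty$.

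Next I would translate ``steady Zeno'' into a statement about $P$. If $x$ admits a steady Zeno trajectory with impact times $t_i \nearrow t_\infty < \infty$, then the iterates $P^i(x_0)$ of the first impact point $x_0 \in S$ form a sequence in $S$ that stays in a compact set (by steadiness). Pass to a convergent subsequence $P^{i_k}(x_0) \to p \in \overline S$. Since $t_{i+1}-t_i \to 0$, the flow segments between consecutive impacts shrink, and so the post-reset points $\Delta(P^{i_k}(x_0))$ must converge to the same limit $p$; the boundary identity property then forces $p \in B := \overline S \cap \overline{\Delta(S)}$. By hypothesis (5) of \cref{def:HDS}, $B$ has codimension $\geq 2$ in $M$, hence codimension $\geq 1$ in $S$, so $\nu(B) = 0$ because $\nu$ is absolutely continuous with respect to the volume measure of $S$.

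Now apply Poincaré recurrence to the measure-preserving system $(S,P,\nu)$: the set $R \subset S$ of $P$-recurrent points has full $\nu$-measure. But a point $y \in S \setminus B$ whose entire $P$-orbit converges into $B$ cannot be recurrent, because any sufficiently small neighborhood of $y$ disjoint from $B$ is visited only finitely often. Therefore the set of Zeno initial impact points is contained in $(S \setminus R) \cup B$, which has $\nu$-measure zero. Finally, $\mathcal{N}$ is obtained from this zero-measure subset of $S$ by saturating along the hybrid flow in finite time, and since the flow preserves $\mu$, this saturation also has $\mu$-measure zero.

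The main obstacle I would expect is the rigorous handling of the return map on the part of $S$ where return times accumulate. A priori, $P$ could fail to be well-defined (or even measurable) exactly on the Zeno set one is trying to control, which risks a circular argument. To circumvent this I would define $P$ only on the set where a finite return time exists, check that this set is measurable and invariant under $P$, and verify that the volume-form identity $\mathcal{L}_X\mu = 0$ together with the reset invariance condition $\Delta^* i_X\mu = \iota^* i_X\mu$ genuinely yields $P_*\nu = \nu$ on this domain; the rest of the argument above then runs on this subset without needing $P$ to be globally defined.
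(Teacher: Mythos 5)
The paper does not actually prove this theorem---it is imported verbatim from \cite{clark2021invariant}---but your argument (inducing the flux measure $i_X\mu$ on the guard, showing the hybrid return map preserves it via Liouville invariance plus the energy condition, locating the accumulation set of a steady Zeno impact sequence inside the codimension-$\ge 2$ set $\overline{S}\cap\overline{\Delta(S)}$, and then invoking Poincar\'e recurrence) is precisely the strategy of that reference, and it is sound. The caveats you flag at the end---finiteness of the induced measure on $S$ and the a.e.-definedness and measurability of the partially defined return map---are exactly the right technical points, and they are resolved there as you propose, by compactness and by restricting to the set on which all iterates exist.
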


It is important to draw attention to the compactness requirement. It is not difficult to find volume-preserving hybrid systems where every trajectory leads to spasmodic Zeno, cf. Example 3 in \cite{clark2021invariant} or \Cref{ex:shrinking} below.

\begin{example}[Shrinking domain]\label{ex:shrinking}
    Let us consider the mechanical system on $\mathbb{R}$ with Hamiltonian given by
    \begin{equation*}
        H = \frac{1}{2}p^2.
    \end{equation*}
    Let the impact surface (in $\mathbb{R}\times M$) be given by the zero level-set of
    \begin{equation*}
        h(t,x) = x^2+t.
    \end{equation*}
    In this case, the ball is oscillating between $-\sqrt{-t}$ and $+\sqrt{-t}$. The region disappears when $t\nearrow 0$ which suggests something Zeno should occur. Computing the impact map, we see that
    \begin{equation*}
        p^+ = -p^- - \frac{1}{x}.
    \end{equation*}
    As $t\nearrow 0$, the momentum escapes to infinity which results in spasmodic Zeno.
\end{example}
The previous example demonstrates that controlling spasmodic Zeno is difficult. The next example shows that although steady Zeno is controlled by \cref{thm:Zeno}, it still may occur.
\begin{example}\label{ex:bball_control_Zeno}
    Consider the controlled bouncing ball with inelastic collisions. The continuous dynamics are
    \begin{equation*}
        \dot{x} = \frac{1}{m}y, \quad \dot{y} = -mg,
    \end{equation*}
    where $x$ is the ball's height, $y$ its momentum, $m$ its mass, and $g$ is the acceleration due to gravity. 
    Impact occurs when $x=0$ and
    \begin{equation*}
        x\mapsto x, \quad y \mapsto -c^2y,
    \end{equation*}
    for some value of $c\in(0,1)$. If we have the (optimal) Hamiltonian,
    \begin{equation*}
        H = -\frac{1}{2}p_y^2 + \frac{1}{m}p_xy - mgp_y,
    \end{equation*}
    the reset map (the unique solution to \cref{eq:controlled_impact} which is volume-preserving) is
    \begin{equation*}
        \begin{split}
            x & \mapsto x, \quad y  \mapsto -c^2y \\
            p_x & \mapsto -\frac{1}{c^2}p_x + \frac{m}{2c^2}\frac{p_y^2}{y}(1-c^{-4}) + \frac{m^2g}{c^2}\frac{p_y}{y}(1+c^{-2}) \\
            p_y & \mapsto -\frac{1}{c^2}p_y.
        \end{split}
    \end{equation*}
    If we take initial conditions with zero momentum, $p_x(0)=p_y(0)=0$, then the trajectories are Zeno.
    Notice that $\tilde{\Delta}$ does not satisfy the boundary identity property even though $\Delta$ does (as $\tilde{\Delta}$ does not continuously extend to $y=0$).
\end{example}


Although Zeno can cause issues in hybrid optimal control, \cref{thm:Zeno} can be utilized to show that ``generic'' control problems will not be marred by Zeno.

\begin{theorem}\label{thm:Zeno_IC}
    Let $\mathcal{H} = (T^*M,S^*,X_H,\tilde{\Delta})$ be the hybrid Hamiltonian system arising from the optimal control problem satisfying the boundary identity property. Then for almost all $x_0\in M$, the cotangent fiber above $x_0$ contains almost no steady Zeno trajectories, i.e. let $\mu_x$ be a smooth measure on $T_x^*M$ and $\lambda$ be a smooth measure on $M$, then
    \begin{equation*}
        \lambda\left( \left\{ x\in M : \mu_x\left( T_x^*M\cap\mathcal{N}\right) >0\right\} \right) = 0.
    \end{equation*}
\end{theorem}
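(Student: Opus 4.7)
The plan is to reduce the statement to Theorem \ref{thm:Zeno} applied on $T^*M$ and then disintegrate the resulting measure-zero conclusion fiberwise via Fubini. First, \Cref{thm:controlled_symplectic_invariant} gives that the lifted hybrid Hamiltonian system $(T^*M,S^*,X_H,\tilde{\Delta})$ preserves the symplectic volume $\mu_\omega := \omega^n/n!$, and by hypothesis $\tilde{\Delta}$ inherits the boundary identity property. Let $\mathcal{N}\subset T^*M$ denote the set of initial conditions whose forward hybrid trajectory is \emph{steady} Zeno. The goal of the first step is to conclude $\mu_\omega(\mathcal{N})=0$.

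Second, to pass from the volume-zero statement on $T^*M$ to a fiberwise statement, I would use Fubini with respect to the natural product structure of $\mu_\omega$. In Darboux coordinates $(x,p)$ one has $\mu_\omega = dx^1\wedge\cdots\wedge dx^n\wedge dp_1\wedge\cdots\wedge dp_n$, and globally, a choice of smooth volume density $\lambda$ on $M$ induces a dual fiberwise density $\tilde{\mu}_x$ on $T_x^*M$ satisfying $\mu_\omega = \lambda\otimes\tilde{\mu}_x$. Since any two smooth positive measures on a manifold are mutually absolutely continuous, null sets with respect to $\tilde{\mu}_x$ coincide with null sets with respect to the given $\mu_x$. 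Applying Fubini then yields
\[
0 \;=\; \mu_\omega(\mathcal{N}) \;=\; \int_M \tilde{\mu}_x\!\left( T_x^*M\cap\mathcal{N}\right)\, d\lambda(x),
\]
so $\tilde{\mu}_x(T_x^*M\cap\mathcal{N})=0$, and hence $\mu_x(T_x^*M\cap\mathcal{N})=0$, for $\lambda$-a.e.\ $x$, which is exactly the claim.

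The main obstacle is the compactness hypothesis in \Cref{thm:Zeno}: the state space $T^*M$ is non-compact, and as \Cref{ex:shrinking} shows, volume-preservation by itself does not rule out \emph{spasmodic} Zeno in the non-compact setting—which is precisely why the theorem restricts attention to \emph{steady} trajectories. I would handle this via compact exhaustion. Fix an increasing sequence of compact sets $K_n\uparrow T^*M$, and let $\mathcal{N}_n\subset\mathcal{N}$ consist of steady Zeno points whose reset sequences $\{\varphi_{t_i}^{\mathcal{H}}(x)\}$ are eventually contained in $K_n$; by the very definition of steady, $\mathcal{N}=\bigcup_n \mathcal{N}_n$. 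The argument underlying \Cref{thm:Zeno}—built on Poincar\'e recurrence together with the boundary identity property, which is exactly what permits completing a Zeno sequence to a limit reset point—localizes to any forward-invariant compact neighborhood of each $K_n$, yielding $\mu_\omega(\mathcal{N}_n)=0$ and hence $\mu_\omega(\mathcal{N})=0$ by countable additivity. Verifying that this localization is compatible with the resets—so that limit points of $\{\tilde\Delta^k(x)\}\subset K_n$ are controlled using the boundary identity property of $\tilde\Delta$ from the preceding proposition—is the technical crux on which the entire reduction rests.
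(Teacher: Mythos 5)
Your proof takes essentially the same route as the paper: invoke \Cref{thm:Zeno} to conclude that the steady Zeno set $\mathcal{N}$ is $\omega^n$-null in $T^*M$, then disintegrate via Fubini over trivializations of the cotangent bundle (the paper uses a countable trivializing atlas where you use a global dual density, an immaterial difference) to obtain the fiberwise statement. The one point where you go beyond the paper is in flagging the non-compactness of $T^*M$ versus the compactness hypothesis of \Cref{thm:Zeno} and sketching a compact-exhaustion repair; the paper's own proof applies \Cref{thm:Zeno} without comment on this, so your extra care is warranted, though your localization sketch is itself only an outline of the verification the paper omits.
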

\begin{proof}
    Let $(M_\alpha,\varphi_\alpha)$ be a countable atlas of $M$ such that the cotangent bundle over each $M_\alpha$ is trivial, $T^*M_\alpha\cong M_\alpha\times V$. Let $\pi_\alpha^1:T^*M\to M_\alpha$ and $\pi_\alpha^2:T^*M\to V$ be the projection maps. Choose volume forms, $\lambda_\alpha$ on $M_\alpha$ and $\mu_\alpha$ on $V$, such that
    \begin{equation*}
        \omega^n|_{T^*M_\alpha} = \left(\pi_\alpha^1\right)^*\lambda_\alpha \wedge \left(\pi_\alpha^2\right)^*\mu_\alpha.
    \end{equation*}
    By \Cref{thm:Zeno} and Fubini's theorem, we have
    \begin{equation*}
        0=\int_{T^*M_\alpha} \, \chi_{\mathcal{N}}\cdot \omega^n = 
        \int_{M_\alpha} \, \left(\int_{V}\, \chi_\mathcal{N}\cdot \mu_\alpha\right) \, \lambda_\alpha.
    \end{equation*}
    Therefore the inner integral must vanish almost everywhere and the result follows as a countable union of null sets remains null.
\end{proof}

\Cref{thm:Zeno_IC} states that for generic initial conditions, Zeno behavior should not occur. However, the conditions for optimality also include terminal conditions. The following theorem addresses this issue.
\begin{theorem}\label{thm:Zeno_BC}
    Suppose that the hybrid Hamiltonian system is free of spasmodic Zeno solutions. Choose a (generic) $x\in M$ such that $T_x^*M$ has almost no steady Zeno solutions and call the set $V_x = T_x^*M\setminus\mathcal{N}$. If for a Lagrangian manifold, $\mathcal{L}$,
    \begin{equation*}
        \varphi_T^\mathcal{H}\left( V_x\right)\cap\mathcal{L} = \emptyset, \quad \overline{\varphi_T^\mathcal{H}\left(V_x\right)} \cap\mathcal{L}\ne\emptyset,
    \end{equation*}
    then there exists a $\tilde{\mathcal{L}}$ arbitrarily close to $\mathcal{L}$ such that
    \begin{equation*}
        \varphi_T^\mathcal{H}\left(V_x\right)\cap\tilde{\mathcal{L}}\ne\emptyset.
    \end{equation*}
\end{theorem}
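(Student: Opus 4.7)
The plan is to exploit the fact that any point of $\mathcal{L} \cap \overline{\varphi_T^{\mathcal{H}}(V_x)}$ is a limit of honest (non-Zeno) points in $\varphi_T^{\mathcal{H}}(V_x)$, and to perturb $\mathcal{L}$ in the $C^1$-topology on Lagrangian submanifolds so that it passes through one such point. Concretely, pick $q \in \mathcal{L} \cap \overline{\varphi_T^{\mathcal{H}}(V_x)}$, which exists by hypothesis, and extract a sequence $\{q_n\} \subset \varphi_T^{\mathcal{H}}(V_x)$ with $q_n \to q$ in $T^*M$. For each $\varepsilon > 0$, the goal is to build a Lagrangian $\tilde{\mathcal{L}}$ with Hausdorff (or $C^1$-graph) distance at most $\varepsilon$ from $\mathcal{L}$ that contains some $q_n$ with $n$ sufficiently large.

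The main tool is Weinstein's Lagrangian neighborhood theorem: a tubular neighborhood $U$ of $\mathcal{L}$ in $T^*M$ is symplectomorphic via some $\Psi$ to a neighborhood of the zero section in $T^*\mathcal{L}$, with $\Psi(\mathcal{L})$ the zero section. In this model, Lagrangians $C^1$-close to $\mathcal{L}$ correspond precisely to graphs of closed $1$-forms on $\mathcal{L}$ with small $C^1$-norm. For $n$ large, $q_n \in U$, and $\Psi(q_n) = (b_n, \alpha_n)$ with $b_n \to q$ along $\mathcal{L}$ and $\alpha_n \to 0$ in the fiber.

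Now I would construct an explicit closed $1$-form $\beta_n$ on $\mathcal{L}$ satisfying $\beta_n(b_n) = \alpha_n$ and $\|\beta_n\|_{C^1} \leq \varepsilon$. In a coordinate chart $(y^1,\ldots,y^n)$ on $\mathcal{L}$ centered at $b_n$, write $\alpha_n = (\alpha_n)_i\, dy^i$ and set
\begin{equation*}
    \beta_n = d\bigl(\rho(y)\, (\alpha_n)_i\, y^i\bigr),
\end{equation*}
where $\rho$ is a smooth bump function equal to $1$ near the origin and vanishing outside a small ball. Since $\alpha_n \to 0$, the factor $(\alpha_n)_i$ can be made as small as desired, making $\|\beta_n\|_{C^1}$ as small as desired uniformly in the bump. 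Define $\tilde{\mathcal{L}} := \Psi^{-1}(\operatorname{graph}(\beta_n))$. Because $\beta_n$ is closed, $\operatorname{graph}(\beta_n) \subset T^*\mathcal{L}$ is Lagrangian, hence $\tilde{\mathcal{L}}$ is Lagrangian in $T^*M$. By construction $\tilde{\mathcal{L}}$ contains $q_n$, so $\tilde{\mathcal{L}} \cap \varphi_T^{\mathcal{H}}(V_x) \neq \emptyset$, while its $C^1$-distance from $\mathcal{L}$ is controlled by $\|\beta_n\|_{C^1} \leq \varepsilon$.

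The principal subtlety, and the step I would expect to require the most care, is the local-to-global passage: Weinstein's theorem only gives control in the tubular neighborhood $U$, so I must verify that the bump construction extending $\beta_n$ by zero outside a small chart produces a globally defined closed (in fact exact) $1$-form whose graph lies within the Weinstein neighborhood, ensuring $\tilde{\mathcal{L}}$ is a genuine Lagrangian submanifold of $T^*M$ rather than merely a local model. If moreover one requires $\tilde{\mathcal{L}}$ to be a \emph{hybrid} Lagrangian, one needs to check that $q$ lies in the interior of $\mathcal{L}$ relative to $S^* \cup \tilde{\Delta}(S^*)$, which is generic; the perturbation argument then localizes away from $\partial \mathcal{L}$ so that the boundary-in-$S^*$ compatibility condition is inherited from $\mathcal{L}$. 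Letting $\varepsilon \downarrow 0$ and taking $n = n(\varepsilon)$ sufficiently large yields the desired family of $\tilde{\mathcal{L}}$.
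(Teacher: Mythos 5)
The paper states \cref{thm:Zeno_BC} without supplying a proof, so there is no argument of the authors' to compare against; judged on its own, your proposal is correct and is essentially the natural way to prove the statement. The core of the theorem is purely a perturbation fact --- a point $q\in\mathcal{L}\cap\overline{\varphi_T^\mathcal{H}(V_x)}$ is a limit of points $q_n\in\varphi_T^\mathcal{H}(V_x)$, and one deforms $\mathcal{L}$ within the class of Lagrangian submanifolds so that it passes through some $q_n$ --- and your combination of the Weinstein neighborhood theorem with the exact bump form $\beta_n=d\bigl(\rho(y)\,(\alpha_n)_i y^i\bigr)$ does exactly that: $\beta_n$ is closed, equals $\alpha_n$ at $b_n$, and has $C^1$-norm $O(|\alpha_n|)\to 0$. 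Two small points deserve explicit mention. First, your chart is centered at $b_n$, which moves with $n$; since $b_n\to q$ you should fix a single chart and bump function around $q$ and use it for all large $n$, so that the constant bounding $\|\beta_n\|_{C^1}$ in terms of $|\alpha_n|$ is uniform. Second, the Lagrangians actually arising in the paper ($T^*_{x_f}M$ and $\Gamma_{dg}$) are non-compact, so one should invoke only the local form of the Weinstein theorem near $q$ and glue $\tilde{\mathcal{L}}$ to $\mathcal{L}$ outside the support of $\rho$, as you indicate; for these specific terminal manifolds the construction even stays within the same class (a nearby fiber $T^*_{\tilde{x}_f}M$ with $\tilde{x}_f=\pi_M(q_n)$, or $\Gamma_{d(g+\phi)}$ for a small bump $\phi$), which is the form in which the theorem is actually used. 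Your caveat about the hybrid-Lagrangian boundary condition is appropriate but not required by the statement, which only asks for a Lagrangian manifold.
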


To conclude this section, let us closely examine what has, and has not, been proved. First off, \cref{thm:Zeno_BC} (as well as the entire section) offers no information about spasmodic Zeno trajectories. We expect that this simplification is not benign as the extended reset map in \cref{ex:bball_control_Zeno} blows up as $y\to 0$ and spasmodic behavior may very well occur. Secondly, \cref{thm:Zeno_BC} does not imply that $\varphi_T^\mathcal{H}$ extends continuously from $V_x$ to $T_x^*M$. If the extension is continuous, it would allow for Zeno solutions to be studied as a limit of non-Zeno solutions.

\section{Example: Bouncing Ball}\label{sec:example_ball}
Below, we present the example of a bouncing ball on an oscillating table. A special case of when the table is stationary is discussed in \cite{clarkoprea}. More examples are shown in \cref{app:examples}.

Consider the controlled falling ball
\begin{equation*}
    \dot{x} = \frac{1}{m}y, \quad \dot{y} = -mg + u,
\end{equation*}
where $x$ is the vertical position of the ball, $y$ is its momentum, $m$ is its mass, $g$ is the acceleration due to gravity, and $u$ is the control. This is an example of a controlled mechanical system where $Q=\mathbb{R}$ and $G = 1/(2m)y^2+mgx$. The location of impact is now \textit{time-dependent} as the table is oscillating and is given by the zero level-set of
\begin{equation*}
    h(x,t) = x-A\sin\omega t,
\end{equation*}
for some parameters $A$, $\omega$. The generalized corner conditions become
\begin{equation*}
	\left[ \Delta y dx - \Delta G dt \right] = \varepsilon\left[ \frac{\partial h}{\partial x} dx + \frac{\partial h}{\partial t}dt \right],
\end{equation*}
the resulting reset map is
\begin{equation*}
    \Delta(x,y) = \left(x, -y+2mA\omega\cos\omega t\right),
\end{equation*}
and occurs when $x=A\sin\omega t$.

The cost we wish to minimize is 
\begin{equation}\label{eq:ball_cost}
    J = \int_0^T\, \frac{1}{2}u^2\, dt + \alpha\left( x(T)-1\right)^2 + \beta y(T)^2.
\end{equation}
Applying \cref{eq:formH} and \cref{eq:minH}, we obtain the optimal Hamiltonian,
\begin{equation*}
	H = -\frac{1}{2}p_y^2 + \frac{1}{m}p_xy - mgp_y.
\end{equation*}
As this is a controlled mechanical system, \cref{thm:mechanical_control_features} applies and, in particular, there exists a unique solution to \cref{eq:controlled_impact}. The extended reset map is
\begin{equation*}
	\begin{split}
		x^+ &= x^- \\
		y^+ &= -y^- + 2mA\omega\cos\omega t \\
		p_x^+ &= -p_x + \left[ \frac{2m^2g}{y+mA\omega\cos\omega t}\right] p_y \\
		p_y^+ &= -p_y^- .
	\end{split}
\end{equation*}
We choose initial conditions $(x_0,y_0) = (1/4,0)$. The optimal control problem is to lift the ball while keeping the (final) momentum zero. A list of our chosen parameters are listed in \cref{tab:parameters_table} and the numerical results are presented in \cref{tab:comparing_table}. Notice that by exploiting the oscillating table, the optimal cost is around two orders of magnitude lower than the non bouncing case. The optimal trajectories for these cases are shown in \cref{fig:bouncing ball}.
\begin{table}
    \centering
    \begin{tabular}{c|c}
       Parameter  & Value \\ \hline
        $m$ & 1 \\
        $g$ & 2 \\
        $T$ & 5 \\
        $\alpha$ & 20 \\
        $\beta$ & 10 \\
        $A$ & 0.05 \\
        $\omega$ & 10 \\
    \end{tabular}
    \caption{Parameter values for the oscillating table.}
    \label{tab:parameters_table}
\end{table}
\begin{table}
    \centering
    \begin{tabular}{r|c}
         Problem Type & Optimal Cost \\ \hline 
         No bouncing & 9.8706 \\
         Bouncing on stationary table & 0.40944 \\
         Bouncing on oscillating table & 0.15729 \\
    \end{tabular}
    \caption{As we can see, bouncing outperforms no bouncing while the oscillating table outperforms the stationary table.}
    \label{tab:comparing_table}
\end{table}
\begin{figure}
	\centering
	\begin{subfigure}[t]{0.425\textwidth}
		\centering
		\includegraphics[width=\textwidth]{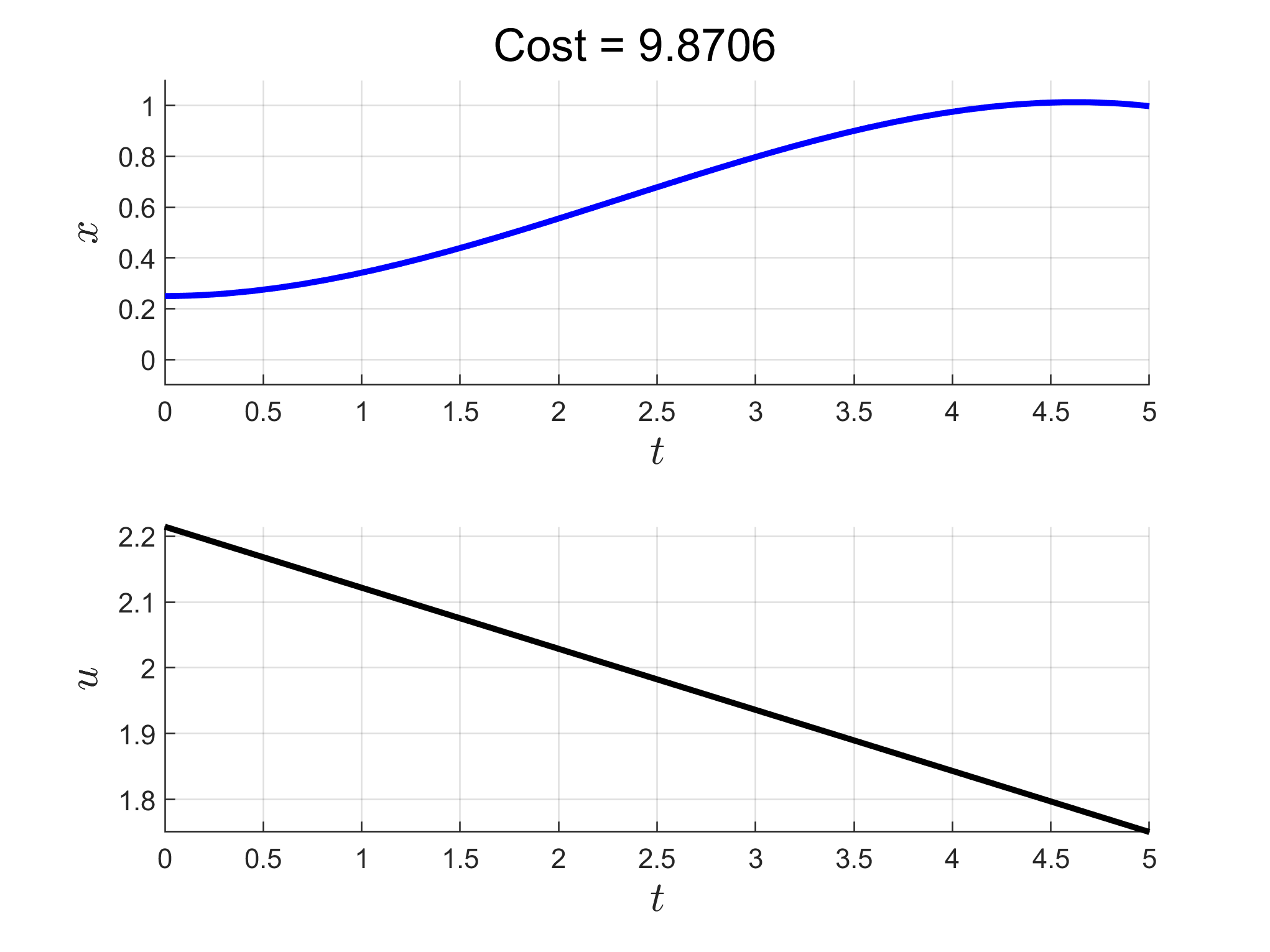}
		\caption{The optimal trajectory for the classical (nonhybrid) problem.}
		\label{fig:no_bouncing}
	\end{subfigure}
	\hfill
	\begin{subfigure}[t]{0.425\textwidth}
		\centering
		\includegraphics[width=\textwidth]{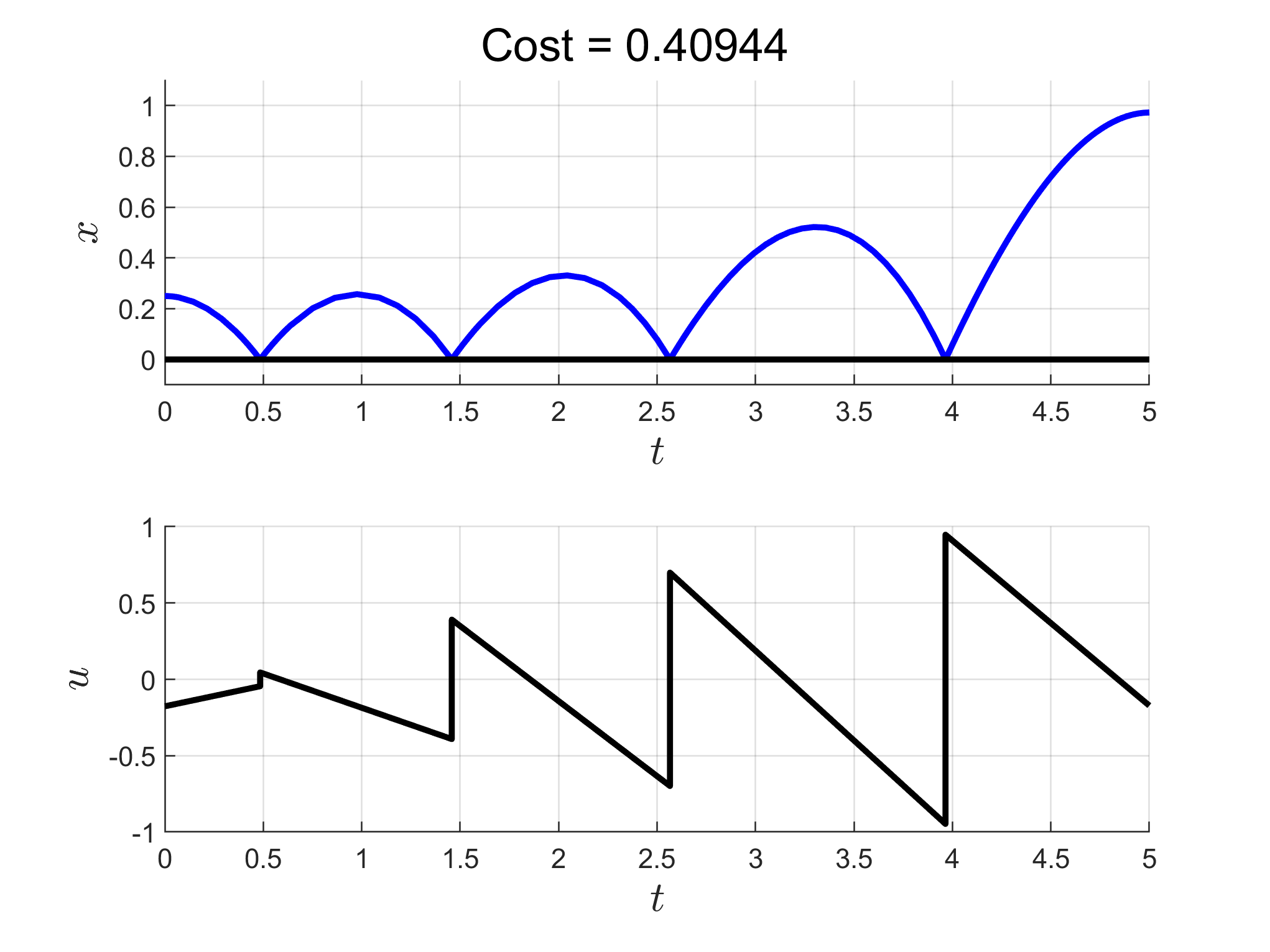}
		\caption{The optimal trajectory for the bouncing ball on a stationary table.}
		\label{fig:no_oscill}
	\end{subfigure}
	\\
	\begin{subfigure}[t]{0.425\textwidth}
	    \centering
	    \includegraphics[width=\textwidth]{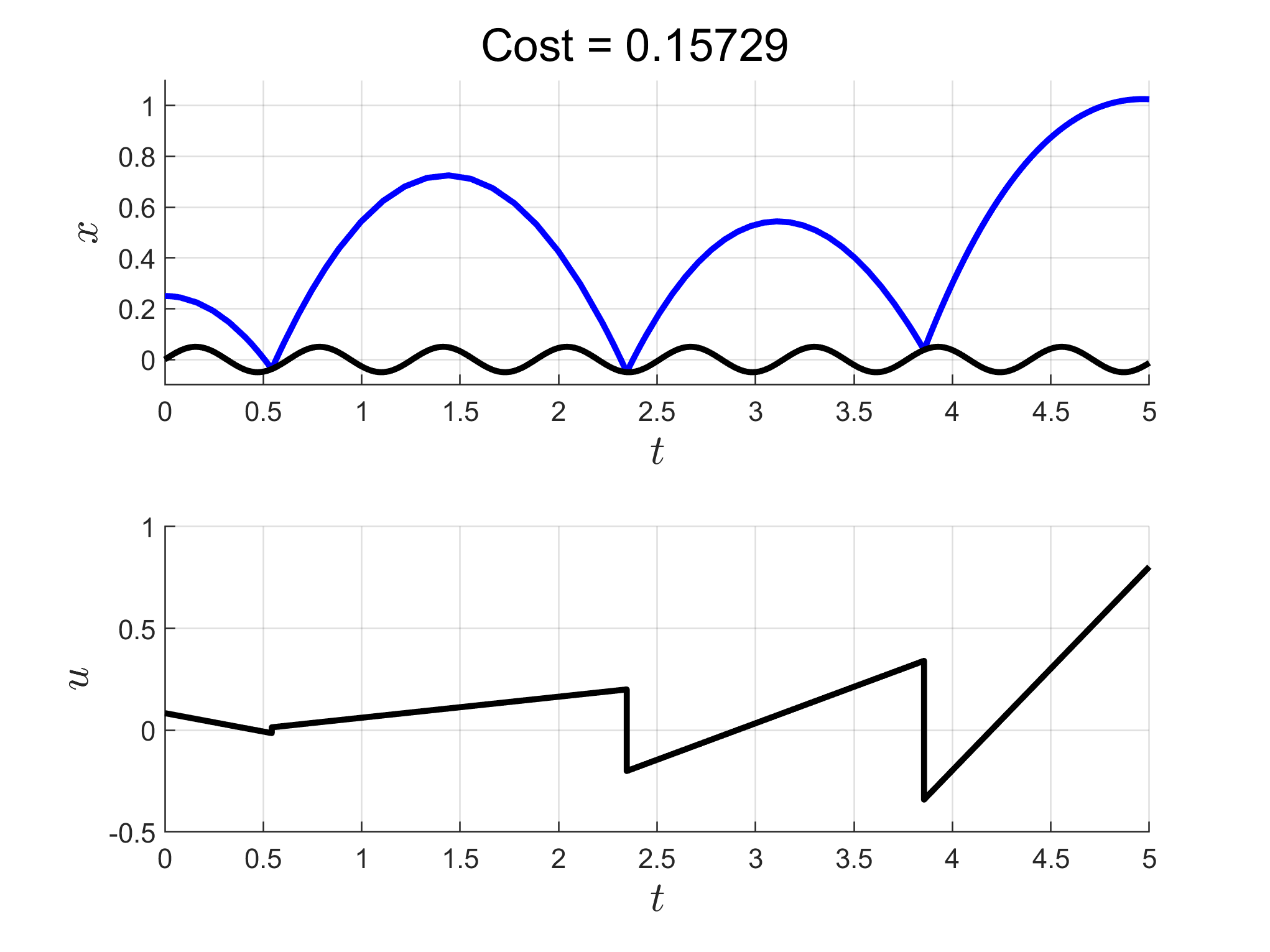}
	    \caption{The optimal trajectory for the bouncing ball on an oscillating table.}
	    \label{fig:yes_oscill}
	\end{subfigure}
	\hfill
	\begin{subfigure}[t]{0.425\textwidth}
		\centering
		\includegraphics[width=\textwidth]{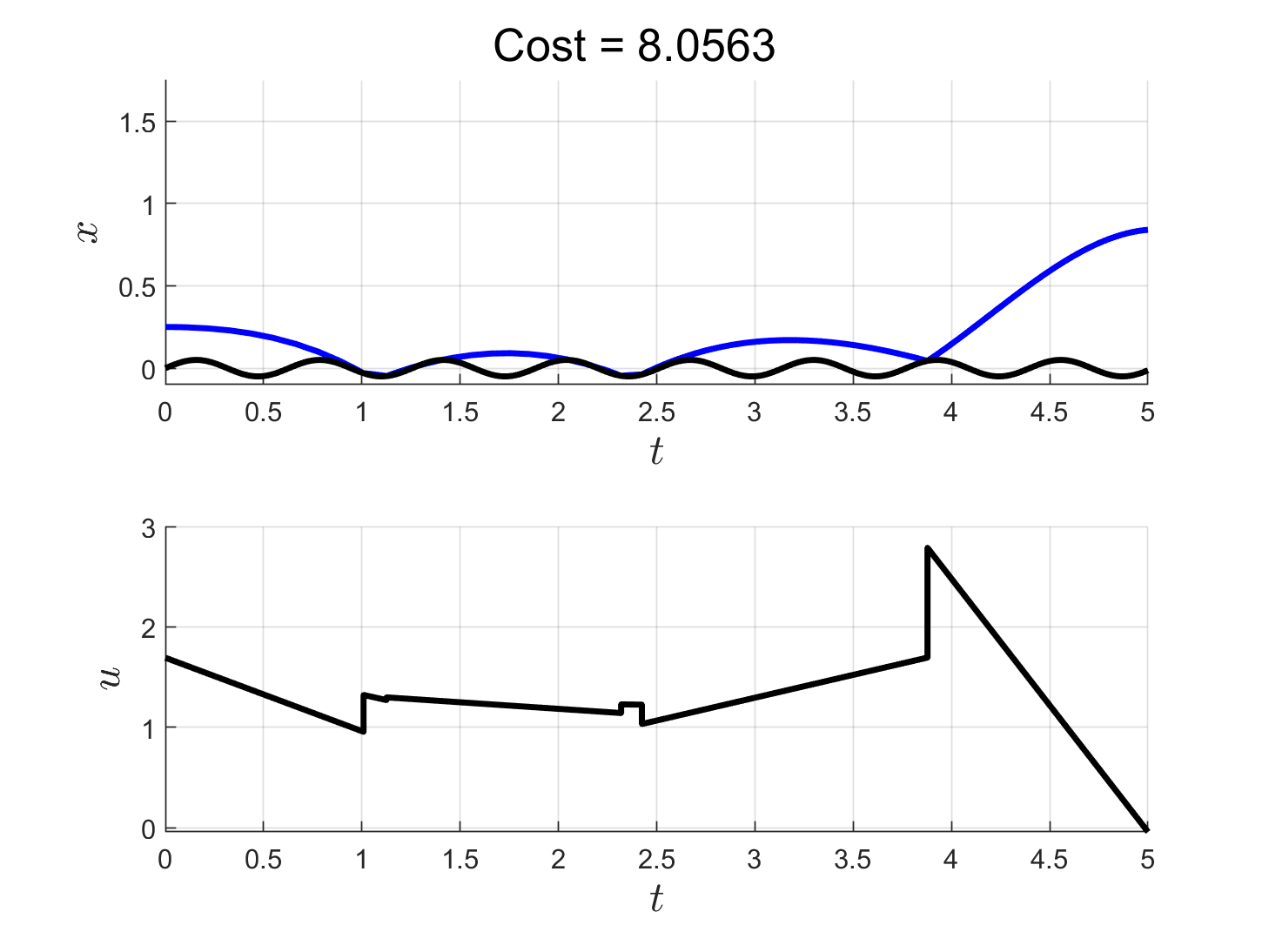}
		\caption{The optimal trajectory for the bouncing ball on a stationary table subject to impact costs.}
		\label{fig:impact_cost}
	\end{subfigure}
	\caption{Plots demonstrating different optimal paths depending on the dynamics of the table.}
	\label{fig:bouncing ball}
\end{figure}
\subsection{Impact Costs}
It should not be surprising that the hybrid optimal control problem can outperform the continuous optimal control problem as more structure is available to exploit. There may be instances where impacts are undesirable and incur an additional cost. In this case, it is not immediate that hybrid will outperform non-hybrid. Suppose that we modify the cost \cref{eq:ball_cost} to include a cost from impacts,
\begin{equation*}
    J = \int_0^T\, \frac{1}{2}u^2\, dt + \sum_i\, \left[y(t_i^+) - y(t_i^-)\right]^2 + \alpha\left(x(T)-1\right)^2 + \beta y(T)^2,
\end{equation*}
where $t_i$ are the impact times, i.e. $h(x(t_i),t_i)=0$. To implement this modified control problem, we add an additional state $z$ such that
\begin{equation*}
    \dot{z} = 0, \quad z^+ = z^- + \left[2mA\omega\cos\omega t - 2y^-\right]^2.
\end{equation*}
The new extended reset map is given by
\begin{equation*}
    \begin{split}
        x & \mapsto x \\[1ex]
        y & \mapsto -y + 2mA\omega\cos\omega t \\[1ex]
        z &\mapsto z + \left[2mA\omega\cos\omega t - 2y\right]^2 \\[1ex]
        p_x & \mapsto p_x - \frac{ 2m\left( 
        p_y - 4yp_z + 4Am\omega p_z\cos\omega t
        \right)\left(mg+4yp_z-4Am\omega p_z\cos\omega t \right)}{y+mA\omega\cos\omega t} \\[1ex]
        p_y &\mapsto -p_y -4p_z(2mA\omega\cos\omega t-2y) \\[1ex]
        p_z &\mapsto p_z.
    \end{split}
\end{equation*}
Using the same initial conditions $(x_0,y_0)=(1/4,0)$ and the same parameters from \cref{tab:parameters_table}, the optimal trajectory is shown in \cref{fig:impact_cost}.

\section{Conclusions}
\label{sec:conclusions}
This work studied the geometric interpretation on the Bolza optimal control problem in the context of hybrid dynamical systems. It is shown that the ``Hamiltonian jump condition'' is necessarily symplectic and hence volume-preserving, although solutions may not exist nor be unique. However, in the particular case of controlled mechanical systems subject to quadratic cost, a unique solution always exists. 
We point out four areas of future work.

\subsection{Hybrid Conjugate Points}
In classical optimal control theory, the necessary conditions for optimality can be sharpened with the observation that the true minimizer lacks conjugate points along its path. This is likely still necessary for hybrid systems but will generally fail to be sufficient. The continuous trajectory shown in \cref{fig:bouncing ball} contains no conjugate points but fails to be optimal. Therefore, a better test needs to be developed to differentiate critical paths from the minimal path.

\subsection{Sharper conditions on steady Zeno}
All results in \Cref{sec:Zeno} are limited to measure theoretic results. However, humans are bad at choosing problems that fail to be generic. As such, it would be insightful to learn which systems truly experience no Zeno. However, there exist optimal control systems where Zeno is even desirable. Take for example the bouncing ball with inelastic collisions as examined in \cref{ex:bball_control_Zeno}. If the terminal condition is $x(T) = y(T)=0$ for some time $T$ large enough, then the optimal control would be $u^*(t)\equiv 0$ and the trajectory \textit{will} be Zeno. One approach to this problem would be to show that the hybrid flow can be continuously extended to Zeno states, although we do not know if this can be done.

\subsection{Spasmodic Zeno}
Another limitation of the results in \Cref{sec:Zeno} is that nothing is known about spasmodic Zeno solutions. In the case of a quadratic cost and weakly actuated impacts, the resulting quadratic Hamiltonian will be degenerate. This is problematic as the Hamiltonian will not be a proper map (energy level-sets will \textit{not} be compact). This makes finite-time blow up a distinct possibility.

\subsection{Non-unique solutions to the extended reset map}
There is no general guarantee that equation \eqref{eq:controlled_impact} has a unique solution. \cref{prop:linear_Hamiltonian} and \cref{prop:quadratic_Hamiltonian} are just particular cases in which such a solution exists, but \cref{ass:reset} does not hold for any system. Thus, a better description of what happens when the assumption fails to be true is necessary.  \cref{app:nonunique} gives a possible solution for the particular case in which the reset map is a projection. However, a full classification of the cases in which solutions are non-existent or non-unique, and a systematic way to deal with them is yet to be found. 




\bibliographystyle{siamplain}
\bibliography{references}

\appendix
\section{Non-unique solutions for the extended reset map} \label{app:nonunique}
Equation \eqref{eq:controlled_impact} might not have a unique solution. Let us consider the particular case in which $\Delta_*$ is a projection matrix and the resets are exclusively time-dependent, i.e. $h_t\ne 0$ and $h_q=0$ where subscript denoted partial differentiation. Therefore, the only constraint on variations is $\delta t = 0$. This reduces the corner conditions to:
\begin{equation}\label{eq:corner}
    (p^+ \circ \Delta_* - p^-) = 0 
\end{equation}
If the push-forward is non-invertible then \eqref{eq:corner} will have either infinitely many solutions or no solutions. The question that needs to be addressed here is: what is the best/correct procedure for producing $\tilde{\Delta}$? In the following we will offer a possible solution in the particular case of a projection. 

For simplicity we will assume that $\Delta_*$ is a projection onto the first $n - 1$ out of $n$ coordinates. The matrix representation of $\Delta_*$ is given by:

$$\Delta_* = \begin{pmatrix}
 1& 0 & \dots & 0 & 0\\
0 & 1 & \dots & 0 & 0\\
\vdots& & \ddots & &\vdots \\
0 & 0 & \dots & 1 & 0\\
0 & 0 & \dots & 0 & 0\\
\end{pmatrix}.$$
Hence, at resets the momenta need to satisfy:
\begin{align*}
    p_i^+ &= p_i^-, \quad \forall \ i = 1, \dots, n-1,\\
    p_n^- &= 0.
\end{align*}
This equation will generally not have solutions for an arbitrary $p_n$. However, since we do not have any other condition on $p_n$ at the impacts, we can set $p_n^- = 0$ and flow it backwards under the Hamiltonian flow to find $p_n^+ $ after the previous impact. If the number of impacts is finite we can use this procedure to produce pairs $(p_n^-, p_n^+)$ which describe the change in momenta at the impacts.

Suppose impacts happen at times $(t_i)_{i = 1}^K$. Let $\varphi_{-t_{i + 1} + t_i}^{n}$ be the $n$-th component of the backwards hybrid flow starting at $-t_{i + 1}$ and going until $-t_i$.  Let $\textbf{q}^-_{i + 1}$ and $\textbf{p}^-_{i + 1}$ be the full $n-$component position and momenta before the impact at the next time step $t_{i + 1}$. Then the solution to equation \eqref{eq:controlled_impact} in the $n$th component of the momentum is given by the following sequence: $$(p_n^-, p_n^+)(t_i) = (0, \varphi_{-t_{i + 1} + t_i}^{n}(\textbf{q}^-_{i + 1}, \textbf{p}^-_{i + 1})).$$
 Note that this solution is no longer a smooth map.
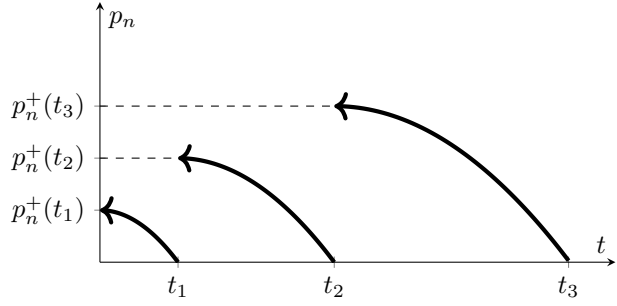
\begin{figure}[h]
\begin{center}
\begin{tikzpicture}[scale=1]
\begin{axis}[
    unit vector ratio* = 1.5 1 1,
	axis lines=center,
	xmin=0, xmax=6.6,
	ymin=0, ymax=5,
 	ylabel=$p_n$,
 	xlabel=$t$,
 	grid=none,
    xtick={1, 3, 6},
    xticklabels={$t_1$, $t_2$, $t_3$},
    ytick = {1, 2, 3},
    yticklabels = {$p_n^+ (t_1)$, $p_n^+ (t_2)$, $p_n^+(t_3)$}
	]
\addplot [<-, smooth, ultra thick, domain=0:1] {1 - x^2};
\addplot [<-, smooth, ultra thick, domain=1:3] {2 - 0.5*(1 - x)^2};
\addplot [<-, smooth, ultra thick, domain=3:6] {3 - 0.33*(3 - x)^2};
\addplot[dashed, domain = 0:3]{3};
\addplot[dashed, domain = 0:1]{2};

\end{axis}
\end{tikzpicture}
\end{center}
\caption{Illustration of the procedure. Starting with the final condition we flow it backwards until we reach an impact. When this happens we reset $p_n = 0$ and flow backwards again.}
\end{figure}
\begin{example}
    We will first consider a toy example where the reset map is a projection onto the first coordinate. Let the equations of motion be:
    \begin{equation}\label{eq:toy_example}
        \begin{cases}
        \dot{x} = y\\
        \dot{y} = x + u
        \end{cases}
    \end{equation}
    and the reset map:
    \begin{equation*}
        x \mapsto x \quad \quad y \mapsto 0.
    \end{equation*}
    The impacts happen whenever we hit an integer time, $t\in\mathbb{Z}$. We wish minimize the cost functional:
    \begin{equation*}
        J = \int_0^T \,\frac{1}{2}(u^2 + x^2) \, dt.
    \end{equation*}
    The optimal Hamiltonian is:
    \begin{equation*}
        H = -\frac{p_y^2}{2} + \frac{x^2}{2} + p_xy + p_y x.
    \end{equation*}
    and hence the equations of motion for the extended variables are:
    \begin{equation*}
          \begin{cases}
        \dot{p}_x = p_y + x \\
        \dot{p}_y = p_x - p_y.
        \end{cases}
    \end{equation*}
    The push forward of the reset map is $ \Delta_*= \begin{pmatrix}1 & 0\\0 &0
    \end{pmatrix}$ so we get $p_x^+ = p_x^-$ and $p_y^- = 0$ for the extended reset map. In order to find the optimal control we start with final conditions: $\{(x, y, 0, 0), x \in \mathbb{R}, y \in \mathbb{R}\}$ and we flow them backwards under the hybrid flow. At the impact surface we reset the $p_y$ momentum to $0$. At the end of this procedure we obtain the initial conditions associated to $(x, y, 0, 0)$ and the control which is stored in $p_y$. In order to check that the control is optimal we can perturb it in different ways and compare the new cost with the optimal control cost. Note that once we obtain the optimal control we can integrate the reduced system \eqref{eq:toy_example} forwards to obtain the new cost. The initial equations, as well as the cost do not depend on the augmented variables $p_x$ and $p_y$. By using the original system only, we avoid the problems we run into when trying to come up with the extended reset map. The costs we obtain when perturbing the control are higher than the optimal cost (see Table \ref{tab:control_cost}). 
    \begin{table}
    \centering
    \begin{tabular}{c|c}
         Control & Cost  \\ \hline
         $ -p_y$ & 4.9726 \\
         $ 0$ & 10.9719 \\
        $-p_y(1 + \epsilon \cos{2\pi t}), \ \epsilon = 0.5$ & 6.9164\\
          $-p_y(1 + \epsilon \cos{2\pi t}), \ \epsilon = 1$ & 10.9744 \\
          
    \end{tabular}
    \caption{Cost for different controls. The optimal control is given by $u^*=-p_y$ and indeed has the minimal cost. The initial conditions used were $x_0 = -0.0232$ and $y_0 =1.2851$ with corresponding terminal conditions: $x(T) = 1 = y(T)$.}
    \label{tab:control_cost}
\end{table}
\end{example}
\section{Examples}\label{app:examples}
Below we present four additional examples demonstrating the theory developed above.
\subsection{Pendulum on a Cart}
Suppose we have the pendulum on a cart as shown in \cref{fig:pendulum_cart}. Here, $M$ is the mass of the cart, $m$ is the mass of the bob, $\ell$ is the length of the pendulum, $x$ is the horizontal displacement of the cart, and $\theta$ is the angle the pendulum with respect to vertical.
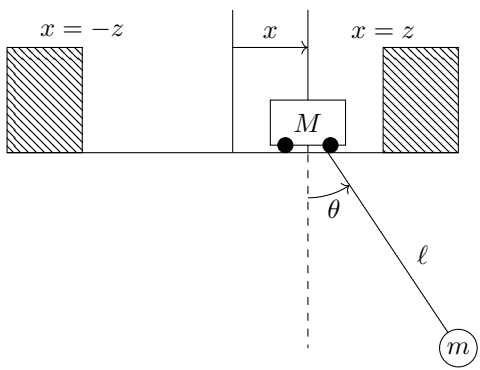
\begin{figure}
	\centering
	\begin{tikzpicture}
		\draw (-4,-0.4) -- (2,-0.4);
		\draw (0,-0.4) -- (0,1.5);
		\draw (0,0) -- (2,-3);
		\draw [dashed] (0,0) -- (0,-3);
		\draw [fill=white] (2,-3) circle [radius=0.25];
		\node at (2,-3) {$m$};
		\draw [fill=white] (-0.5,-0.3) rectangle (0.5,0.3);
		\node at (0,0) {$M$};
		\draw [fill] (-0.3,-0.3) circle [radius=0.1];
		\draw [fill] (0.3,-0.3) circle [radius=0.1];
		\path  (0,-3) coordinate (a)
            -- (0,0) coordinate (b)
            -- (2,-3) coordinate (c)
        pic["$\theta$", draw=black, ->, angle eccentricity=1.2, angle radius=1cm] {angle=a--b--c};
		\draw (-1,-0.4) -- (-1,1.5);
		\draw [->] (-1,1) -- (0,1);
		\node [above] at (-0.5,1) {$x$};
		\node [above right] at (1.33,-2) {$\ell$};
		\draw[pattern=north west lines, pattern color=black] (-4,-0.4) rectangle (-3,1);
		\draw[pattern=north west lines, pattern color=black] (1,-0.4) rectangle (2,1);
		\node [above] at (-3,1) {$x=-z$};
		\node [above] at (1,1) {$x=z$};
	\end{tikzpicture}
	\caption{The pendulum on the cart. The cart is bounded between two rigid walls.}
	\label{fig:pendulum_cart}
\end{figure}
The Hamiltonian for the (uncontrolled) system is
\begin{equation*}
	G = \frac{1}{2m\ell^2(M+m\sin^2\theta)} \left[ m\ell^2p_x^2 - 2m\ell p_xp_\theta\cos\theta + (M+m)p_\theta^2\right] - \ell\cos\theta.
\end{equation*}
Inputting controls via a horizontal thrust to the cart, the controlled continuous dynamics are given by
\begin{equation*}
	\begin{split}
		\dot{x} &=  \frac{\ell p_x - p_\theta\cos\theta}{\ell(M+m\sin^2\theta)}, \\
		\dot{\theta} &= 
		\frac{ (M+m)p_\theta - m\ell p_x\cos\theta}{m\ell^2(M+m\sin^2\theta)}, \\
		\dot{p}_x &=  u, \\
		\dot{p}_\theta &= 
		-\ell\sin\theta -\frac{\sin\theta\left( \ell p_x - p_\theta\cos\theta\right)\left( (M+m)p_\theta - m\ell p_x\cos\theta\right)}
		{\ell^2(M+m\sin^2\theta)^2}.
	\end{split}
\end{equation*}
The impact occurs at $x=\pm z$ and the reset map is given via
\begin{equation*}
    x \mapsto x, \quad \theta\mapsto \theta, \quad p_x\mapsto -p_x+ \frac{2}{\ell}p_\theta\cos\theta,\quad p_\theta \mapsto p_\theta.
\end{equation*}
Let the momentum variables be relabeled as $v:= p_x$ and $\omega := p_\theta$. For the optimization problem, suppose that we wish to minimize the controller effort:
\begin{equation*}
    \min \, \int_0^T\, \frac{1}{2}u^2\, dt,
\end{equation*}
subject to the following fixed boundary conditions
\begin{equation*}
    \begin{gathered}
        x(0) = x(T) = 0, \quad \cos\theta(0)=1, \quad \cos\theta(T) = -1, \\
        v(0) = v(T) = 0, \quad \omega(0)=\omega(T)=0,
    \end{gathered}
\end{equation*}
i.e. the system starts at rest with the pendulum pointing straight down and the system ends at rest with the pendulum pointing straight up. The optimized Hamiltonian is
\begin{equation*}
    \begin{split}
        H &= p_x\left( \frac{\ell v - \omega\cos\theta}{\ell(M+m\sin^2\theta)}\right) + p_\theta\left( 
	\frac{ (M+m)\omega - m\ell v\cos\theta}{m\ell^2(M+m\sin^2\theta)} \right) - \frac{1}{2}p_v^2 \\
	&\hspace{0.5in} + p_\omega \left( -\ell\sin\theta -\frac{\sin\theta\left( \ell v - \omega\cos\theta\right)\left( (M+m)\omega - m\ell v\cos\theta\right)}
	{\ell^2(M+m\sin^2\theta)^2} \right),
    \end{split}
\end{equation*}
where the optimal control is given by $u = -p_v$. The extended reset map, $\tilde{\Delta}$, has a unique solution (guaranteed by \cref{thm:mechanical_control_features}) and is given by
\begin{equation*}
    \begin{split}
        x&\mapsto x, \quad \theta\mapsto\theta,\quad v\mapsto -v+\frac{2}{\ell}\omega\cos\theta,\quad \omega\mapsto\omega, \\
        p_x&\mapsto  -p_x + \frac{2}{\ell}\left( p_\theta\cos\theta + \omega p_\omega\sin\theta\right) + 
        \frac{2 p_v}{\ell^2} \left( \frac{m\cos^2\theta\sin\theta(\ell v-\omega\cos\theta)}{M+m\sin^2\theta}\right)
        \\
	    &\hspace{0.5in}  - \frac{2p_v}{\ell^2}\left( \frac{\sin\theta(m\ell^3\cos\theta+\omega^2)(M+m\sin^2\theta)}{m(\ell v - \omega\cos\theta)} \right), \\
	    p_\theta &\mapsto p_\theta - \frac{2}{\ell}\omega p_v\sin\theta, \\
	    p_v&\mapsto -p_v, \\
	    p_\omega &\mapsto p_\omega + \frac{2}{\ell}p_v\cos\theta.
    \end{split}
\end{equation*}
For a numerical example, we will use the parameters in \cref{tab:parameter_cart}
\begin{table}
    \centering
    \begin{tabular}{c|c}
         Parameter& Value  \\ \hline
         $m$ & 1 \\
         $M$ & 2 \\
         $\ell$ & 1 \\
         $z$ & 1 \\
         $T$ & 12
    \end{tabular}
    \caption{Parameters used for the pendulum on a cart.}
    \label{tab:parameter_cart}
\end{table}
\begin{figure}
    \centering
    \includegraphics[width=\textwidth]{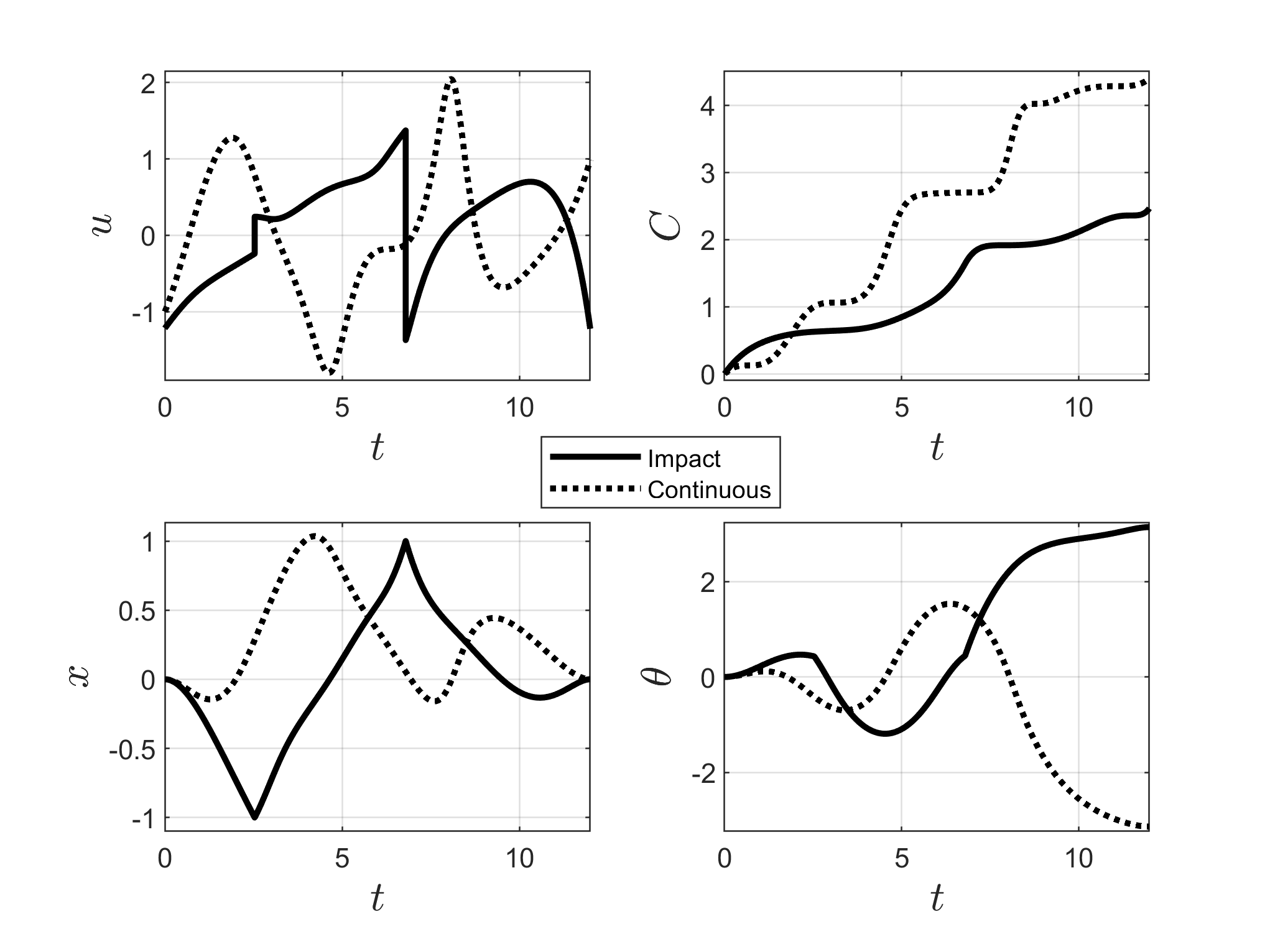}
    \caption{Optimal trajectories for both the impacting and non-impacting cases. The solid line corresponds to the impacting case while the dashed line is the purely continuous case.}
    \label{fig:comparing_carts}
\end{figure}
\begin{table}
    \centering
    \begin{tabular}{r|c}
         Problem Type & Optimal Cost \\ \hline
         No bouncing & 4.4027 \\
         Bouncing & 2.4653
    \end{tabular} 
    \caption{Allowing the cart to recoil off the walls results in a reduction in the optimal cost by around a half.}
    \label{tab:costs_cart}
\end{table}
In \cref{fig:comparing_carts}, the plot for $C$ is the accumulated cost
\begin{equation*}
    C(t) = \int_0^t \, \frac{1}{2}u^2\, dt.
\end{equation*}
\subsection{Competitive Fishing}
Suppose that the quantity of fish in a lake, $x$, evolves according to the logistic model
\begin{equation*}
    \dot{x} = f(x) = \alpha x(1-x),
\end{equation*}
where $\alpha>0$ is some constant. Furthermore, suppose that the fish is a competitive resource; we continuously fish while our adversary only fishes intermittently and the goal is to find a fishing scheme that maximizes our haul while minimizing our foe's.
Let $u$ be our continuous fishing effort. The continuous dynamics become
\begin{equation*}
    \dot{x} = \alpha x(1-x) - ux, \quad \dot{y} = ux,
\end{equation*}
where $y$ is the amount of fish that we have accumulated. If our opponent fishes only when $t\in\mathbb{Z}$, then the discrete dynamics are 
\begin{equation*}
    x \mapsto (1-\beta)x, \quad z \mapsto z + \beta x,
\end{equation*}
where $z$ is the amount of fish our opponent has caught. For the purposes of this competition, we want to maximize our fish while minimizing theirs. As such, we will take the following cost functional to minimize:
\begin{equation*}
    J = \int_0^T \, \frac{1}{2}u^2 \, dt + \frac{1}{2}\gamma\left[z(T)^2 - y(T)^2\right],
\end{equation*}
where $\gamma>0$ is a balancing parameter. 
The Hamiltonian for this problem is
\begin{equation*}
    \hat{H} = \frac{1}{2}u^2 + p_x(\alpha x(1-x)-ux) + p_yux,
\end{equation*}
which is optimized to become
\begin{equation*}
    H = -\frac{x^2}{2}\left(p_x-p_y\right)^2 + \alpha p_x x(1-x),
\end{equation*}
where the optimal control is $u=x(p_x-p_y)$. The dynamics are given by
\begin{equation*}
    \renewcommand{\arraystretch}{1.25}
    \begin{array}{ll}
        \dot{x} = \alpha x(1-x) - x^2(p_x-p_y), 
        & \dot{p}_x = x(p_x-p_y)^2+\alpha p_x(2x-1), \\
        \dot{y} = x^2(p_x-p_y), & \dot{p}_y = 0, \\
        \dot{z} = 0, & \dot{p}_z = 0,
    \end{array}
\end{equation*}
when $t\not\in\mathbb{Z}$. When $t\in\mathbb{Z}$, we undergo the reset
\begin{equation*}
    \renewcommand{\arraystretch}{1.25}
    \begin{array}{ll}
        x \mapsto (1-\beta)x, &p_x \mapsto \dfrac{p_x-\beta p_z}{1-\beta}, \\
        y \mapsto y, & p_y \mapsto p_y, \\
        z \mapsto z + \beta x, & p_z\mapsto p_z.
    \end{array}
\end{equation*}

For numerical purposes, we will use the parameters $x(0) = 0.25$, $\alpha = 2$, $\beta=0.5$, and $\gamma=5$. Results are shown in \cref{fig:fishing_example}.

\begin{figure}
    \centering
    \includegraphics[width=\textwidth]{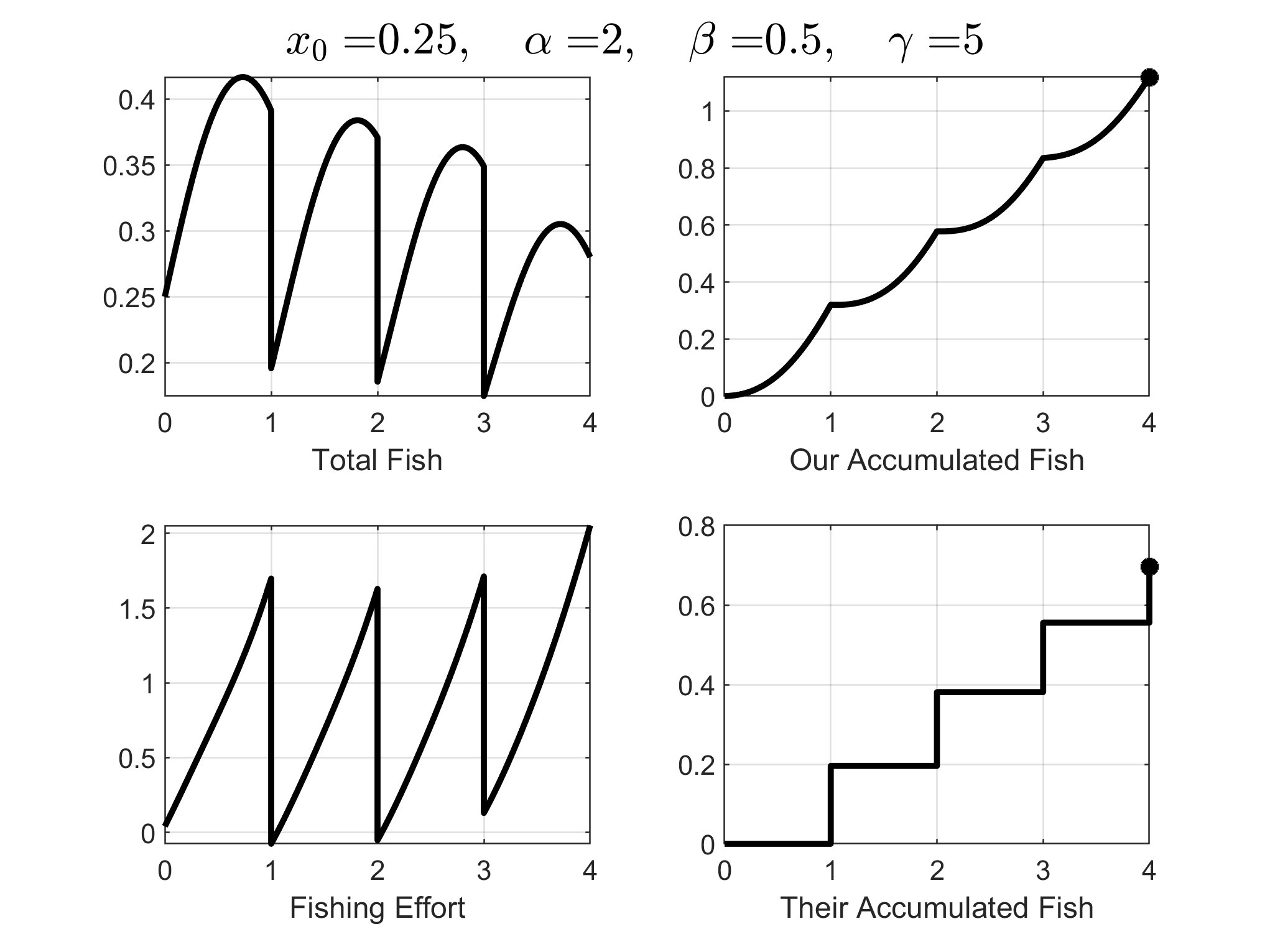}
    \caption{The fishing effort becomes negative immediately following the adversaries harvest. This corresponds to us restocking the lake to temporarily boost the population.}
    \label{fig:fishing_example}
\end{figure}

\subsection{Interplanetary Trajectory Design}
Suppose we have a powered spacecraft with control over the thrust direction $\theta$, constant thrust magnitude $\nu$, and state variables
\begin{align*}
    \mathbf{q}&=(x,v)=(x_1,x_2,v_1,v_2),
\end{align*}
on an interplanetary trajectory (for the sake of simplicity, we will stick to the planar case). Also let the positions of each of the planets and the sun ($i=0$) as a function of time be given by
\begin{equation*}
    \mathbf{q}_i(t)=(x^{i},v^{i})=(x^{i}_1(t),x^{i}_2(t),v_1^{i}(t),v_2^{i}(t)),
\end{equation*}
with masses $M_i$. Finally, rather than treating this as a traditional $N-$body problem, we will employ the {\it patched conic} approximation, in which we assume that the spacecraft is generically only under the gravitational influence of the sun:
\begin{equation*}
        \dot{\mathbf{q}}=\left(v,\nu\left(\cos(\theta),\sin(\theta)\right)-GM_0\dfrac{x}{\|x\|^3}\right),
\end{equation*}
where $G$ is the universal gravitational constant, and choosing coordinates such that $x^{0}=(0,0)$. However, whenever the spacecraft passes close enough to a planet that the planet's gravitational pull is dominant, all other gravitational forces will be neglected. This region is called the sphere of influence (SOI).
If $a$ is the semimajor axis of the orbit of planet $i$, then the radius of its sphere of influence is taken to be
$R_i=a\left(\frac{M_i}{M_0}\right)^{\frac{2}{5}}$, as is standard \cite{bate1971}. In this case, the effect of any given flyby may be treated similar to an elastic collision with a moving surface, in which the spacecraft incurs an instantaneous velocity change. In particular, the spacecraft will be sent away from the SOI, having changed its heliocentric speed by $\|v^+-v^-\|$, and its ``pump'' angle by $\delta=\pi-\beta=\tan^{-1}\left(\frac{v^+\times v^-}{\left<v^+,v^-\right>}\right)$. Each of these ``collisions'' will occur about the SOI of the flyby planet. These SOIs $\{S_i(t)\}_i$ together will constitute our impact surface; $S(t)=\cup_{i}S_i(t)$.

Suppose that at time $t'$, the spacecraft intersects the SOI of planet $i$. Then the velocity of the spacecraft in that planet's frame is given by $v_{fb}=v^--v^{i}$, and its position by $x_{fb}=x^--x^{i}$. Using these parameters, the flyby pump angle is given by
$$\delta=\pi-\beta=2\csc^{-1}\left(\left\|\dfrac{v_{fb}\times(x_{fb}\times v_{fb})}{M_iG}-\dfrac{x_{fb}}{\|x_{fb}\|}\right\|\right).$$
Thus, if $R_z(\theta)$ is the rotation by the angle $\theta$ about the $z-$axis we can compute $v^+=v^{i}+R_z(\delta)v_{fb}$. In a slightly more sophisticated model, the impact map would also appropriately update spacecraft position. However for our purposes we will simply set $x^+=x^-$.
\begin{figure}
    \centering
    \includegraphics[width=\textwidth]{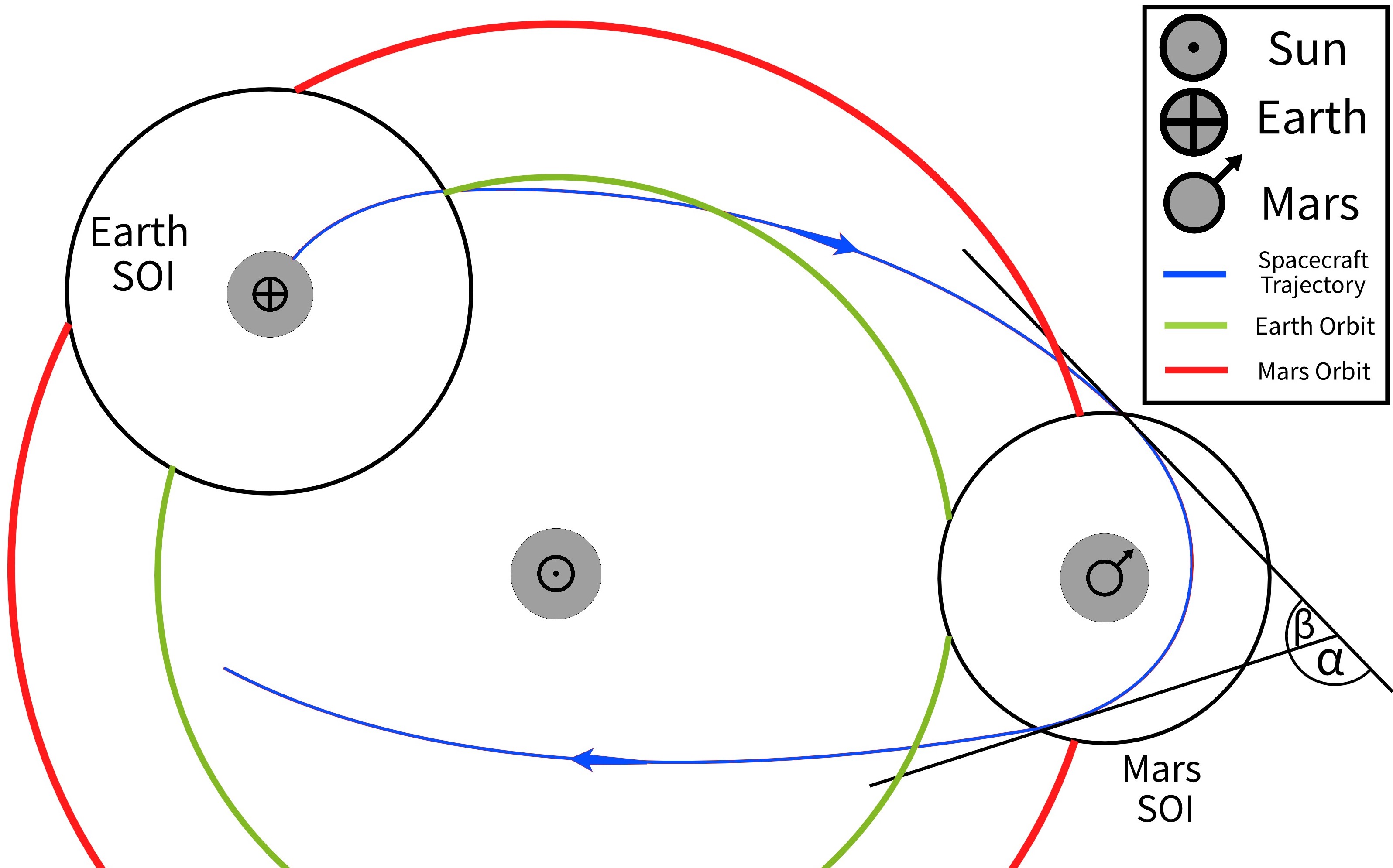}
    \caption{A sample Earth-Mars flyby trajectory. Note that this figure is not drawn to scale. In reality the SOIs are extremely small relative to the distances between the bodies of interest.}
    \label{fig:flyby_example}
\end{figure}

Suppose our mission is to gather data on planet $i$, so that we wish to minimize our final distance from planet $i$. Then we might take the following as our cost functional
\begin{equation*}
    J=\int_{0}^{T}\gamma\|x(t)-x^i(T)\|^2dt+\|x(T)-x^i(T)\|,
\end{equation*}
where $\gamma>0$ is a tuning parameter. The control Hamiltonian is
\begin{equation*}
    \begin{aligned}
    \hat{H}(\mathbf{p},\mathbf{q},\theta)&=\gamma\|x-x^i\|^2+\left<p_x,v\right>+\nu\left<p_v,\left(\cos(\theta),\sin(\theta)\right)\right>-\dfrac{M_0G}{\|x\|^3}\left<p_v,x\right>,
    \end{aligned}
\end{equation*}
which results in the optimized Hamiltonian
\begin{align*}
    \tilde{H}&=\gamma\|x-x^i\|^2+\left<p_x,v\right>+\nu\|p_v\|-\dfrac{M_0G}{\|x\|^3}\left<p_v,x\right>,
\end{align*}
yielding equations of motion
\begin{equation*}
    \begin{aligned}
    \dfrac{dx}{dt}&=v,\\
    \dfrac{dv}{dt}&=\nu\dfrac{p_v}{\|p_v\|}-M_0G\dfrac{x}{\|x\|^3},\\
    \dfrac{dp_x}{dt}&=M_0G\dfrac{p_v\|x\|^2-3x\left<p_v,x\right>}{\|x\|^5}-2\gamma(x-x^i(T)),\\
    \dfrac{dp_v}{dt}&=-p_x.
    \end{aligned}
\end{equation*}
An optimal trajectory to Mars with an Earth flyby maneuver is shown in \cref{fig:2bp_ImpactTajectory}. Plots of the control (thrust angle) and of the running cost of this trajectory are provided in \cref{fig:2bp_Control_Cost}.
\begin{figure}
    \centering
    \includegraphics[width=.9\textwidth]{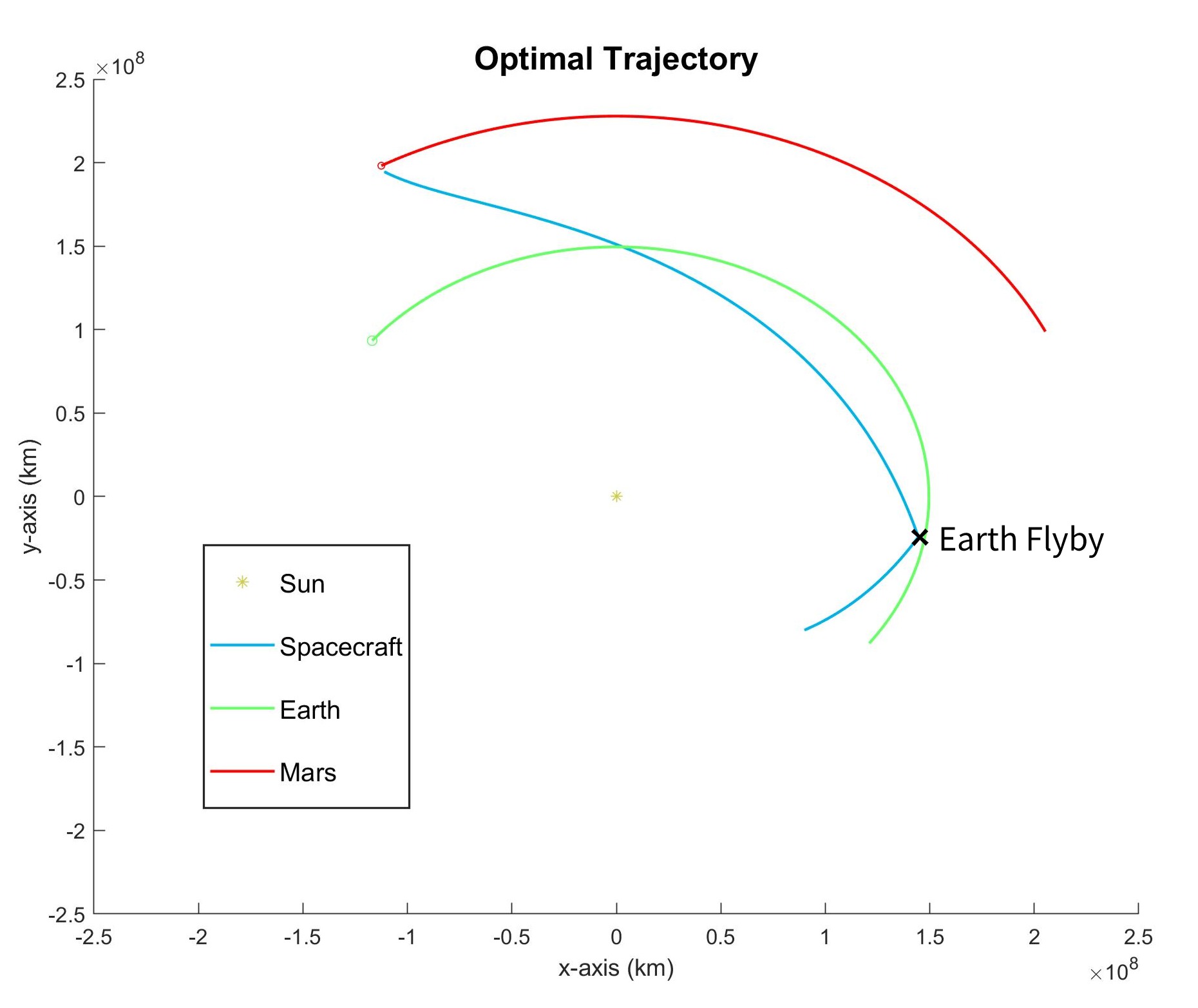}
    \caption{Optimal 6 month trajectory to Mars with Earth flyby on day 28.\\ ($\text{cost}=1.7123\cdot10^{9}$, $\gamma=2\cdot10^{-15}$, $\nu=3\cdot10^{-6}$)}
    \label{fig:2bp_ImpactTajectory}
\end{figure}
\begin{figure}
    \centering
    \includegraphics[width=.9\textwidth]{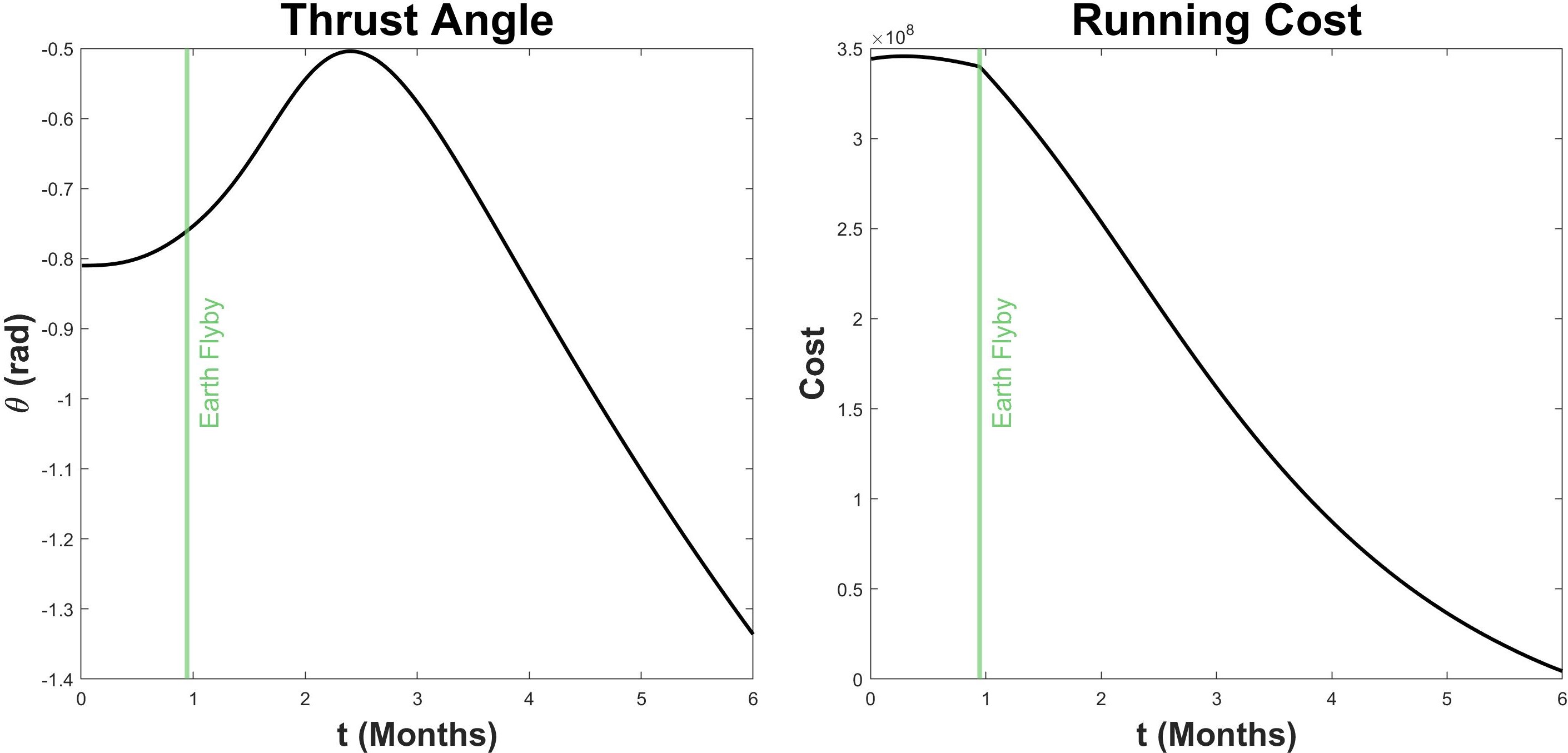}
    \caption{Control value/thrust angle (left) and running cost (right). Notice the rapid decrease in running cost post flyby. Small changes in flyby approach velocity tend to result in large changes in exit velocity. Thus, flybys enable even a relatively weak control to have a large effect on the spacecraft trajectory. Also referred to as a ``Gravity Assist'', such flyby maneuvers are an important part of interplanetary trajectory design, \cite{Koon00dynamicalsystems}.}
    \label{fig:2bp_Control_Cost}
\end{figure}



\subsection{Phytoplankton in a polluted pond}
We will adapt the simplified phytoplankton  nutrient equations from \cite{plankton} to model a pond that has a direct access to a pollutant source. To simplify the analysis we will assume that the pollutant consists of nitrate, which is also a nutrient for the phytoplankton. At the end of each day, the factory dumps an amount $w$ of waste into the pond. Nutrient excess may create a phytoplankton bloom which can have a  devastating impact on the lake ecosystem. We want to control the amount of waste we introduce into the pond so as to avoid such situations. 

The governing equations for the simplified nutrient-plankton model are:
\begin{equation*}
    \begin{cases}
        \dot{n} = - u - np -qn\\
        \dot{p} = np - p
    \end{cases}    
\end{equation*}
 where $n$ mg/L is the nutrient and $p $ mg/L is the phytoplankton concentration in the lake, $u$ is the effective influx and $q$ is the nutrient loss rate. The control is given by $u$, which we can think of as the amount of nutrient we have to pump out of the waste channel in order to reduce the nutrient in the lake and control the phytoplankton concentration. The reset map is:
 \begin{equation*}
     n \mapsto n + \gamma,   \quad  \quad p \mapsto p,
 \end{equation*}
where $\gamma $ is the amount of waste we dump into the lake at the end of each day. The impact surface is given by $h^{-1}(0)$ with $h(t, x) = t \ mod \ 1$.

We want to pump out as little as possible so as to keep the phytoplankton concentration below a certain threshold. The cost functional is:

\begin{equation*}
    J(u, n, p) = \int_0^T \frac{u^2}{2}du + (p - p_{optimal})^2,
\end{equation*}
and the corresponding  Hamiltonian is: 
\begin{equation*}
    \Hat{H} = \frac{u^2}{2} + p_n (-u - np - qn) + p_p(np - p).
\end{equation*}
The control that minimizes the Hamiltonian is $u = p_n$ which gives the following equations of motion:
\begin{align*}
    \dot{n} &=- p_n - np - qn,\\
    \dot{p} &= np - p,\\
    \dot{p}_n &= p_n p + p_n q - p_p p,\\
    \dot{p}_p &= n p_n - p_p + np_p.
\end{align*}
The extended reset map is:
\begin{align*}
    n &\mapsto n + \gamma,\\
    p &\mapsto p,\\
    p_n& \mapsto p_n,\\
    p_p &\mapsto p_p.
\end{align*}
Numerical results are presented in \cref{fig:plankton}.
\begin{figure}
	\centering
	\begin{subfigure}[t]{0.5\textwidth}
		\centering
		\includegraphics[width=\textwidth]{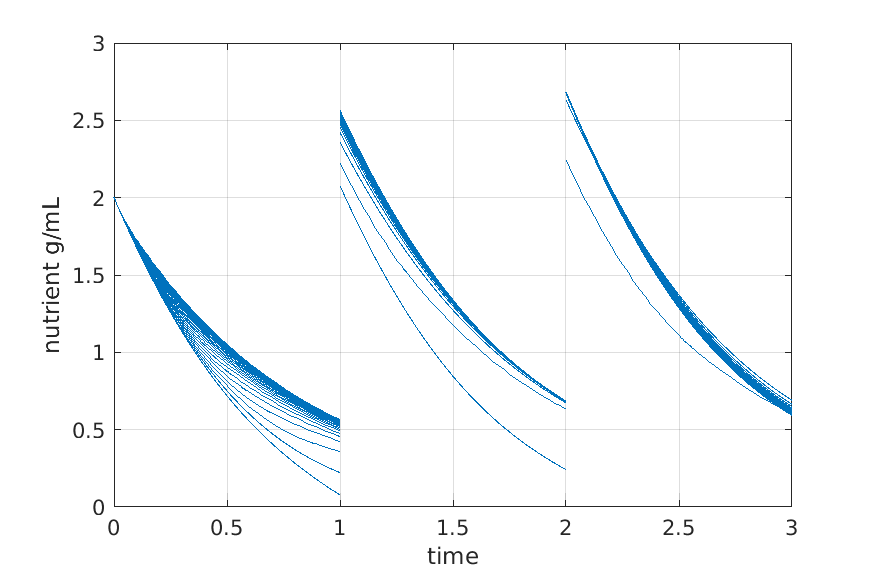}
		\caption{The optimal nutrient concentrations }
		\label{fig:plankton1}
	\end{subfigure}%
	\begin{subfigure}[t]{0.5\textwidth}
		\centering
		\includegraphics[width=\textwidth]{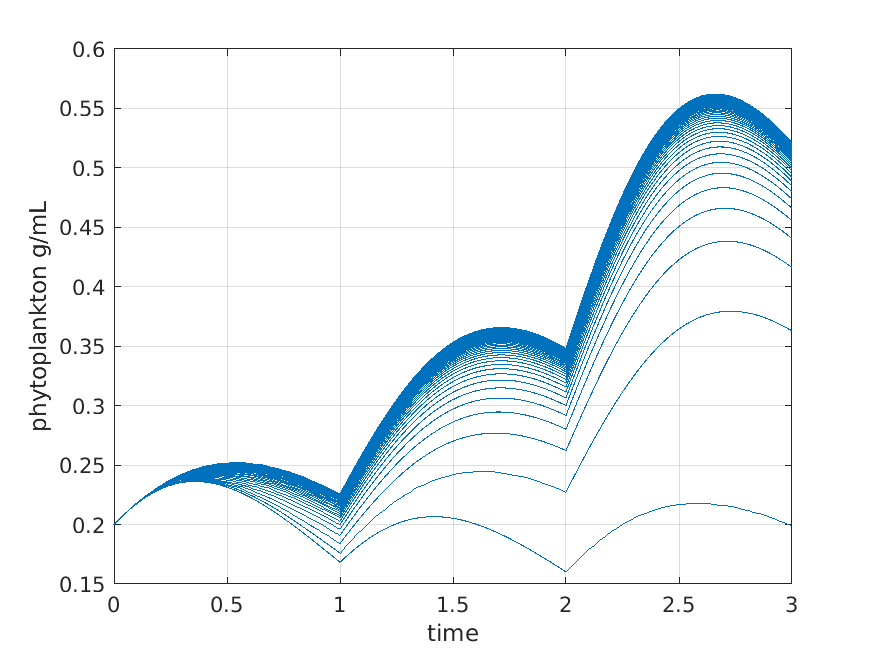}
		\caption{The optimal phytoplankton concentrations}
		\label{fig:plankton2}
	\end{subfigure}
	\caption{Optimal paths for different values of terminal $p_n$ and $p_p$. Parameters used: $ q = 1$, $\epsilon = 2$, and terminal conditions: $n(T) = 2$, $p(T) = 0.2$.}
	\label{fig:plankton}
\end{figure}

\end{document}